 \newcounter{idesc} 
\DeclareSymbolFont{AMSb}{U}{msb}{m}{n}
\begin{document}

\def\A{\mathcal {A}}
\def\B{\mathcal {B}}
\def\C{\mathcal {C}}
\def\P{\mathbb P}
\def\N{\mathbb N}
\def\HH{\mathbb H}
\def\VV{\mathbb V}
\def\H{\mathcal {H}}
\def\K{\mathcal {K}}
\def\M{\mathcal {M}}
\def\R{\mathbb {R}}
\def\TT{\mathbb {T}}
\def\RR{\mathcal {R}}
\def\U{\mathcal {U}}
\def\DE{D(\mathcal E)}
\def\F{\mathcal {F}}
\def\EE{\mathcal {E}}
\def\E{\mathbb {E}}
\def\1{\,{\makebox[0pt][c]{\normalfont    1}
\makebox[2.5pt][c]{\raisebox{3.5pt}{\tiny {$\|$}}}
\makebox[-2.5pt][c]{\raisebox{1.7pt}{\tiny {$\|$}}}
\makebox[2.5pt][c]{} }}
\def\one{\1 }
\def\eI{[0,1]}
\def\eps{\varepsilon}
\def\prt{\partial}
 \def\one{\1 }
 \newcommand{\norm}[1]{\left\| #1 \right\|}

\newtheorem{thm}{Theorem}
\newtheorem{cor}[]{Corollary}
\newtheorem{defn}[]{Definition}
\newtheorem{bem}[]{Remark}
\newtheorem{bemn}[bem]{Remarks}
\newtheorem{bsp}[]{Example}
\newtheorem{bspe}{Examples}

\newcommand\iitem{\stepcounter{idesc}\item[(\roman{idesc})]}
\newenvironment{idesc}{\setcounter{idesc}{0}
\begin{description}} {\end{description}}

\def\dnvol{{d \ol{\normalfont v}}}

\def\Tod{\mathop{\Rightarrow}\limits^d}
\def\PP{{\mathcal P}}
\def\Z{{\mathbb Z}}
\def\CMS{CM[0,1]^2}
\def\eI{{[0,1]}}
\def\L2{{L^2}}
\def\L11{\mathcal P _{ac}}
\def\llangle{\langle \langle}
\def\rrangle{\rangle \rangle}
\def\dd{\mathrm{d}}

 \def\Pbeta{\mathbb P^\Beta}
 \def\G{\mathcal G}
 \def\Qbeta{{\mathbb Q^\beta}}
  \def\bbox{{\hfill $\Box$}}

  \def\C{{\mathcal C}}
 \def\D{{\mathcal D}}

\newcommand{\comp}{\mathrm{comp}\,}
\newcommand{\leb}{{\mbox{Leb}}}
\newcommand{\Cyl}{{\mathfrak{C}}}
\newcommand{\Syl}{{\mathfrak{S}}}
\newcommand{\Zyl}{{\mathfrak{Z}}}
\def\smint{{\mbox{$\int$}}}

  \newcommand{\Ent}{\mbox{\rm Ent}}
 \newcommand{\Gaps}{\mbox{\rm gaps}}

\newtheorem{theorem}{Theorem}[section]
\newtheorem{lemma}[theorem]{Lemma}
\newtheorem{corollary}[theorem]{Corollary}
\newtheorem{proposition}[theorem]{Proposition}

\theoremstyle{definition}
\newtheorem{remark}[theorem]{Remark}
\newtheorem{example}[theorem]{Example}
\newtheorem{definition}[theorem]{Definition}

\newcommand{\Q}{{\mathbb{Q}}}
\newcommand{\Pp}{{\mathbb{P}}}

\renewcommand{\labelenumi}{(\roman{enumi})}

\author{Alexei Kulik
\footnote{Institute of Mathematics, NAS of Ukraine, 3, Tereshchenkivska str., 01601  Kyiv, Ukraine
\sf{kulik.alex.m@gmail.com}},
Michael Scheutzow
\footnote{Institut f\"ur Mathematik, MA 7-5,
Technische Universit\"at Berlin,
10623 Berlin,
Germany
\sf{ms@math.tu-berlin.de}}}

\title{Generalized couplings and convergence of transition probabilities\\
}

\maketitle  ~\\

\begin{abstract}
We provide sufficient conditions for the uniqueness of an invariant measure of a Markov process as well as for the weak convergence of transition
probabilities to the invariant measure. Our conditions are formulated in terms of generalized couplings. We apply our results to several SPDEs for
which unique ergodicity has been proven in a recent paper by Glatt-Holtz, Mattingly, and Richards and show that under essentially the same assumptions
the weak convergence of transition probabilities actually holds true.
\end{abstract}


\section{Introduction}

In this article, we provide sufficient conditions in terms of (generalized) couplings for  the uniqueness of an invariant measure and weak
convergence to the invariant measure for a Markov chain taking values in a Polish space $E$. Such criteria have already been established for the
uniqueness of an invariant measure in \cite{BM05} and \cite{HMS11} but -- to the best of our knowledge -- not for the  weak convergence
(or asymptotic stability) of transition probabilities. In \cite{KPS10} and \cite{BKS}, uniqueness and asymptotic stability were shown
for so-called e-processes (which we explain below)
and a similar approach was used in \cite{HMS11} to prove asymptotic stability. Our aim is to present a unified approach to both uniqueness and asymptotic
stability in terms of generalized (asymptotic) couplings. Here, a probability measure $\xi$ on a product space is called a {\em generalized} coupling of
$\mu$ and $\nu$ if the marginals of $\xi$ are not necessarily equal to $\mu$ and $\nu$ but only absolutely continuous. We point out that we
do not assume the e-property to hold (which indeed does not hold in all cases of interest -- see e.g. Example \ref{ex54} --  and even if it does it is often
cumbersome to verify). Our uniqueness statement, Theorem  \ref{unique}, is a slight generalization of \cite[Theorem 1.1]{HMS11}. We will show in
Example \ref{comparison} that our conditions are indeed strictly weaker than those in \cite{HMS11}. At the same time the proof is quite short and elementary.

We then proceed to formulate sufficient conditions for the convergence of transition probabilities assuming existence of an invariant measure $\mu$.
Our main results are Theorems \ref{conv1} and \ref{conv2}, the former one providing a sufficient condition for weak convergence of transition probabilities for
$\mu-$almost all initial conditions and the latter one for weak convergence of the transiton probabilities starting from a given point $x \in E$. In both theorems,
the conditions are formulated in terms of generalized couplings. In Theorem \ref{conv1} convergence is in fact in probability, i.e. the measure $\mu$ of the
set of initial  conditions for which the distance of the transition probability to the invariant measure $\mu$ after $n$ steps is larger than $\eps$ converges to 0
for every $\eps>0$. It seems to be an open question if convergence even holds true in an almost sure sense. We point out that our proof of Theorem \ref{conv1}
requires the chain to be {\em Feller}, while Theorems \ref{unique} and \ref{conv2} do not rely on this property. However, the Feller property is required to hold true
with respect to some metric $d$ on $E$ which in general may differ from the original one, hence this assumption is quite flexible and non-restrictive.

The main results are proved in Sections \ref{s3} and \ref{s4}. In Section \ref{s5} several examples are given which illustrate the conditions imposed in the main
results and clarify the relations of these results with some other available in the field.

To illustrate the usefulness of our results, in Section \ref{sSDDE} we give two groups of their applications. First, in Section \ref{s31}, we consider  the same
example as in \cite{HMS11}, namely a stochastic delay  equation which has the space of continuous functions on $[-1,0]$ as its natural state space.
The solution is a Feller process and it is not hard to find a generalized coupling which satisfies the conditions of Theorem \ref{unique}. This coupling is actually a simplified version of the one used in the proof of \cite[Theorem 3.1]{HMS11}; namely, our  Theorem \ref{unique}, unlike \cite[Theorem 1.1]{HMS11}, does not require the equivalence of the marginal distributions, thus the ``localization in time'' part of the construction of the generalized coupling can be omitted. Remarkably, such a simplified construction appears well applicable in Theorems \ref{conv1} and \ref{conv2} as well, so that without any additional work we get asymptotic stability
for free (unlike in \cite{HMS11}).

This method to improve a result from unique ergodicity to asymptotic stability looks quite generic, and in our second group of applications essentially the same method
leads to  several new statements. We reconsider the results from the recent paper  \cite{GMR15}, where   the generalized coupling technique
(or {\em asymptotic coupling}, in their terminology) is used to prove uniqueness of an ergodic measure for several types of non-linear SPDEs. Each of the five
SPDE models considered therein is analytically quite involved, and \cite{GMR15} perfectly illustrates the flexibility of the generalized coupling approach, which
appears to be well applicable in complicated models. In Section \ref{s32} we show that just  minor modifications in the construction of the generalized couplings
from \cite{GMR15} make them applicable in our Theorems \ref{conv1} and \ref{conv2}, as well, providing asymptotic stability (almost) for free and thus illustrating
the power of our approach.

\section{Main results}\label{mainresults}
\subsection{Basic definitions and notation}\label{basic}
Let $(E,\rho)$ be a Polish (i.e.~separable, complete metric) space with Borel $\sigma$-algebra $\EE$, and let $X=\{X_n, n\in {\mathbb{Z}}_+\}$ be a Markov chain with
state space $(E, \mathcal{E})$, where
${\mathbb{Z}}_+:=\{0,1,\cdots\}$. Transition probabilities  and $n$-step transition probabilities for $X$ are denoted respectively by $P(x,\dd y)$ and $P_n(x,\dd y)$.
Let $E^\infty:=E^{{\mathbb{Z}}_+}$. The law of the sequence $\{X_n\}$ in $(E^\infty, \mathcal{E}^{\otimes \infty})$ with initial distribution $\mathrm{Law}\, (X_0)=\mu$
is denoted by $\mathbb{P}_\mu$, the respective expectation is denoted by $\mathbb{E}_\mu$; in case $\mu=\delta_x$ we write
simply $\mathbb{P}_x, \mathbb{E}_x$.

Recall that an invariant probability measure for $X$ is a probability measure $\mu$ on $(E, \mathcal{E})$ such that
\begin{equation}\label{invar}
\mu(\dd y)=\int_EP(x,\dd y)\mu(\dd x).
\end{equation}
Equivalently, a probability measure $\mu$  is invariant if the sequence $\{X_n, n\in \mathbb{Z}_+\}$ is strictly stationary under $\mathbb{P}_\mu$. An invariant
probability
measure $\mu$ for $X$ is {\em ergodic}, if the left shift on the space $(E^\infty, \mathcal{E}^{\otimes \infty})$ is ergodic with respect to  $\mathbb{P}_\mu$.
Recall that a strictly stationary sequence $\zeta_n, n\in \mathbb{Z}_+$ is called \emph{mixing} if for any bounded measurable functions $f,g:E\to \R$
\begin{equation}\label{mixing}
\E f(\zeta_0)g(\zeta_{n})\to \E f(\zeta_0)\E g(\zeta_0), \quad n\to \infty.
\end{equation}

For a measurable space $(S, \mathcal{S})$, we denote the set of all probability measures on  $(S, \mathcal{S})$ by $\mathcal{P}(S)$.
For given $\mu, \nu\in \mathcal{P}(S)$, define
$$
C(\mu,\nu)=\Big\{\xi\in \mathcal{P}(S\times S):\pi_1(\xi)=\mu,\, \pi_2(\xi)=\nu\Big\},
$$
where $\pi_i(\xi)$ denotes the $i$-th marginal distribution  of $\xi, i=1,2$. Any  $\xi\in C(\mu,\nu)$ is called a \emph{coupling} for
$\mu,\nu.$ We also introduce the following two extensions of the notion of a coupling. Recall that $\mu \ll \nu$ means that $\mu$ is absolutely continuous
with respect to
$\nu$ and $\mu \sim \nu$ means that $\mu$ and $\nu$ are equivalent, i.e. mutually absolutely continuous. Define
$$
\widetilde C(\mu,\nu)=\Big\{\xi\in \mathcal{P}(S\times S):\pi_1(\xi)\sim \mu, \pi_2(\xi)\sim\nu\Big\},
$$
$$
\widehat  C(\mu,\nu)=\Big\{\xi\in \mathcal{P}(S\times S):\pi_1(\xi)\ll\mu, \pi_2(\xi)\ll\nu\Big\},
$$
and call any probability measure  from one of the classes $\widetilde C(\mu,\nu), \widehat  C(\mu,\nu)$  a \emph{generalized coupling} for
$\mu,\nu.$
 In what follows, we use this notation mainly in the following two frameworks: (a) $S=E$, (b) $S=E^\infty$; that is, the probability  measures  are considered
 either on
 the initial state space or on the trajectories space. To distinguish the notation, we denote probability measures by $\mu,\nu, \dots$ and
 $\P,\Q, \dots$
 respectively in the first and the second cases.

 For given $p>1$ and $R \ge 1$, denote by $\widehat  C^R_p(\P,\Q)$ the set of generalized couplings  $\xi\in \widehat  C(\P,\Q)$ such that
$$
\left(\int_{E^\infty}\left({\dd \pi_1(\xi)\over \dd\P}\right)^p\,\dd\P\right)^{1/p}\leq R, \quad
\left(\int_{E^\infty}\left({\dd\pi_2(\xi)\over \dd\Q}\right)^p\,\dd\Q\right)^{1/p}\leq R.
$$

 Let $(S, \rho)$ be a metric space and $h:S\times S\to [0,1]$ be a \emph{distance-like function}; that is, $h$ is symmetric, lower semicontinuous, and
$h(x,y)=0\Leftrightarrow x=y$. The associated  \emph{minimal} (or \emph{coupling}) distance on  $\mathcal{P}(S)$ (denoted by the same letter $h$) is defined by
$$
h(\mu,\nu)=\inf_{\eta\in C(\mu,\nu)}\int_{S\times S} h(x,y)\, \eta(\dd x,\dd y), \quad \mu,\nu\in \mathcal{P}(S).
$$
When $\rho\leq 1$ and $h(x,y)=\rho(x,y),$ the above definition coincides with the definition of the Kantorovich-Rubinshtein metric (also commonly
called 1-Wasserstein metric) on $\mathcal{P}(S)$, and it is well known that $\mathcal{P}(S)$ with this metric is a Polish space; cf. \cite{Dudley}, Chapter 11.

Without loss of generality we assume furthermore that the metric $\rho$ on $E$ satisfies $\rho\leq 1$ (otherwise we introduce an equivalent metric $\rho\wedge 1$).
We  define the metric $\rho^{(\infty)}$ on $E^\infty$ by
$$
\rho^{(\infty)}(x,y)=\sum_{n=0}^\infty{1\over 2^{n+1}}d(x_n,y_n), \quad  x=(x_n)_{n\geq 0}, \, y=(y_n)_{n\geq 0}\in E^\infty,
$$
and   the metric $\rho^{(\infty,\infty)}$ on $E^\infty\times E^\infty$ by
$$
\rho^{(\infty,\infty)}\Big((x,x'), (y, y')\Big)=\rho^{(\infty)}(x,y)+\rho^{(\infty)}(x', y').
$$
We consider $\mathcal{P}(E)$, $\mathcal{P}(E^\infty)$, and $\mathcal{P}(E^\infty\times E^\infty)$ as Polish spaces w.r.t.~the corresponding Kantorovich-Rubinshtein
metrics  $\rho,\rho^{(\infty)}$ and $\rho^{(\infty,\infty)}$. The metric $\rho$ on $\mathcal{P}(E)$ induces weak convergence which we will denote by $\Rightarrow$.

Recall the following facts about the structure of the set of invariant probability measures;  e.g. \cite{DZ96}, Section 3.2 or \cite[Theorem 5.7]{H06}:
\begin{itemize}
  \item The set $\mathcal{I}_X$ of the invariant probability measures for $X$ is a convex compact set in $\mathcal{P}(E)$.
  \item Each two different ergodic invariant probability measures are mutually singular.
  \item Every extreme point of the set $\mathcal{I}_X$ is an ergodic invariant probability  measure,
  and each invariant probability measure $\mu$ has a representation of the form
  \begin{equation}\label{decomp}
\mu=\int_{\mathcal{P}(E)} \nu \, \kappa (\dd\nu),
  \end{equation}
where $\kappa$ is a probability measure
on the space $\mathcal{P}(E)$ which is concentrated on the extreme points  of the set $\mathcal{I}_X$.
\end{itemize}

Together with the initial metric $\rho$  on $E$, we will consider another metric $d$, and assume that it is bounded and continuous  with respect to the metric $\rho$.
The metric $d$ is not assumed to be complete. All measurability and continuity statements refer to $\rho$ rather than $d$ unless we explicitly say something different.
Considering two metrics, one to deal with measurability issues and one to prove convergence, is motivated by applications to SPDE models, see Section \ref{s32} below.
In many cases of interest however one can avoid such  complications and choose $d$ and $\rho$ the same.

When $d\not=\rho$, we denote by $\overline{E}^d$ the completion of $E$ with respect to $d$, and regard  $E$ as a subset in $\overline{E}^d$.
Note that $(\overline{E}^d,d)$ is a Polish space (where we denote the extended metric again by $d$). We also assume that for any $y\in E$ there exist a sequence of $d$-continuous functions $\rho_n^y:\overline{E}^d\to [0, \infty)$ such that for $x\in  \overline{E}^d$
\begin{equation}\label{approx}
\rho_n^y(x)\to \left\{
                 \begin{array}{ll}
                   \rho(x,y), & x\in E \\
                   \infty, & \hbox{otherwise}
                 \end{array}
               \right.,\quad n\to \infty;
\end{equation}
 cf. \cite{GMR15}, Appendix A.  This ensures that the image in $\overline{E}^d$ of any open ball in $E$ is a $d$-Borel subset. Because $(E,d)$ is separable, this implies that $E$ is a $d$-Borel subset in $\overline{E}^d$ and guarantees that the \emph{trace $\sigma$-algebra} 
 $$
 \{A\cap E, A\in \mathcal{B}(\overline{E}^d)\} 
 $$
 on $E$ coincides with $\EE$, hence
allowing us to identify $\mathcal{P}(E)$ with the set of measures from
$\mathcal{P}(\overline{E}^d)$ which provide a full measure for $E$.

We will use a separate notation $\Tod$ for the weak convergence in $\mathcal{P}(E)$ with respect to $d$. We will call the  Markov chain  {\em $d$-Feller}
if for each bounded and $d$-continuous function $f:E \to \mathbb{R}$, the map $x \mapsto \int f(y)P(x,\dd y)$ is $d$-continuous.

Finally, recall that $X$ is called an \emph{e-chain} with respect to the metric $d$, if its transition probability function is \emph{$d$-equicontinuous}:
for any $x\in E, \eps>0$ there exists $\delta>0$ such that
$$
d(P_n(x,\cdot), P_n(y,\cdot))\leq \eps, \quad n\geq 0, \quad d(x,y)<\delta.
$$
We note that our definition  is equivalent to that in \cite[Definition 2.1]{KPS10} by the Kantorovich-Rubinshtein duality theorem, see \cite{Dudley}, Chapter 11.

\subsection{Main theorems}

Our first main result is aimed at uniqueness of an invariant probability measure. It is a slight generalization of \cite[Theorem 1.1]{HMS11}.

\begin{theorem}\label{unique} Let $\mu_1$ and $\mu_2$ be ergodic invariant probability measures. Assume that for  some set
$M\in \EE\otimes \EE$ with
$\mu_1 \otimes \mu_2 (M)>0$  for each $(x,y)\in M$ there exists $\alpha_{x,y}>0$ such that
for each $\varepsilon>0$ there exists  $\xi_{x,y}^\eps \in \widehat  C(\P_x,\P_y)$  which satisfies
\begin{equation}\label{limsup}
\limsup_{n \to \infty} \frac 1n \sum_{i=0}^{n-1}\xi_{x,y}^\eps \big( d(X_i,Y_{i})\le \varepsilon\big) \ge \alpha_{x,y}.
\end{equation}\
Then $\mu_1=\mu_2$.
\end{theorem}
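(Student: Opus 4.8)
The plan is to argue by contradiction: suppose $\mu_1\neq\mu_2$. Since $\mu_1,\mu_2$ are ergodic and distinct, they are mutually singular (by the structure facts recalled above). The whole strategy is to manufacture from the generalized couplings $\xi_{x,y}^\eps$ an honest coupling $\theta_0\in C(\mu_1,\mu_2)$ charging the diagonal $\Delta=\{(u,u):u\in E\}$ with mass at least $\alpha_{x,y}>0$; this contradicts mutual singularity, because the image of $\theta_0|_\Delta$ under $(u,u)\mapsto u$ is a nonzero measure dominated by both $\mu_1$ and $\mu_2$. It is crucial here that we only need \emph{positive} diagonal mass, not mass close to $1$, which is exactly what the frequency $\alpha_{x,y}$ provides.

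First I would fix a good pair $(x,y)$. Using Birkhoff's ergodic theorem together with ergodicity of $\mu_1$ and $\mu_2$, applied to a countable convergence-determining family of bounded continuous functions (cf. \cite{Dudley}, Chapter 11), one obtains that for $\mu_1$-almost every $x$ the empirical measures $\frac1n\sum_{i=0}^{n-1}\delta_{X_i}$ converge weakly to $\mu_1$, $\P_x$-almost surely, and symmetrically $\frac1n\sum_{i=0}^{n-1}\delta_{Y_i}\Rightarrow\mu_2$, $\P_y$-a.s., for $\mu_2$-a.e.\ $y$. Since the set of such $(x,y)$ has full $\mu_1\otimes\mu_2$-measure and $\mu_1\otimes\mu_2(M)>0$, I may pick $(x,y)\in M$ with both properties, and set $\alpha:=\alpha_{x,y}$.

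Next, fix $\eps>0$ and the coupling $\xi=\xi_{x,y}^\eps\in\widehat C(\P_x,\P_y)$. Because $\pi_1(\xi)\ll\P_x$, the $\P_x$-a.s.\ event $\{\frac1n\sum\delta_{X_i}\Rightarrow\mu_1\}$ also has full measure under $\xi$, and likewise for $Y$. Consider the averaged pair law $\bar\Theta_n:=\E_\xi\big[\frac1n\sum_{i=0}^{n-1}\delta_{(X_i,Y_i)}\big]\in\mathcal P(E\times E)$. By bounded convergence its two marginals converge weakly to $\mu_1$ and $\mu_2$, hence are tight, so $\{\bar\Theta_n\}$ is tight. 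Choosing a subsequence along which the $\limsup$ in \eqref{limsup} is realized and $\bar\Theta_n$ converges weakly, Prokhorov's theorem yields a limit $\theta^\eps\in C(\mu_1,\mu_2)$ (the marginals are exactly $\mu_1,\mu_2$, and no mass escapes since the limiting marginals are probability measures on $E$). As $d$ is $\rho$-continuous the set $\{(u,v):d(u,v)\le\eps\}$ is closed, and the Portmanteau theorem together with the identity $\bar\Theta_n(\{d\le\eps\})=\frac1n\sum_{i=0}^{n-1}\xi(d(X_i,Y_i)\le\eps)$ gives $\theta^\eps(\{d\le\eps\})\ge\alpha$.

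Finally, let $\eps\downarrow0$. All $\theta^\eps$ lie in $C(\mu_1,\mu_2)$, which is tight, so along some sequence $\eps_j\downarrow0$ one has $\theta^{\eps_j}\Rightarrow\theta_0\in C(\mu_1,\mu_2)$. For each fixed $\eps>0$, Portmanteau applied to the closed set $\{d\le\eps\}$ gives $\theta_0(\{d\le\eps\})\ge\alpha$, and letting $\eps\downarrow0$ (using $\{d\le\eps\}\downarrow\Delta$, as $d$ is a metric) yields $\theta_0(\Delta)\ge\alpha>0$, the desired contradiction. I expect the main obstacle to be the bookkeeping that converts the frequency condition \eqref{limsup} into positive diagonal mass while preserving the exact marginals: specifically, the tightness/Prokhorov passage to $\theta^\eps$ and getting the correct (closed-set) direction of the Portmanteau inequality in both the $n\to\infty$ and the $\eps\downarrow0$ limits.
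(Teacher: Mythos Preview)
Your argument is correct, but it proceeds along a genuinely different route from the paper's. The paper fixes disjoint compacts $K_1^m,K_2^m$ with $\mu_i(K_i^m)>1-1/m$, builds a single $d$-Lipschitz separating function $f^m:E\to[0,1]$ with $f^m|_{K_1^m}=0$, $f^m|_{K_2^m}=1$, and computes the Ces\`aro mean of $f^m(Y_i)-f^m(X_i)$ under $\xi_{x,y}^\eps$ in two ways: the ergodic theorem forces the limit to be $\int f^m\,\dd\mu_2-\int f^m\,\dd\mu_1\ge 1-2/m$, while the coupling hypothesis \eqref{limsup} combined with the Lipschitz bound forces the $\liminf$ to be strictly smaller. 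This is an entirely elementary contradiction involving one test function and no compactness. Your approach instead packages the ergodic averages into the empirical pair-law $\bar\Theta_n$, passes to weak limits twice (once in $n$, once in $\eps$) via Prokhorov, and ends up with a true coupling $\theta_0\in C(\mu_1,\mu_2)$ with $\theta_0(\Delta)\ge\alpha$, contradicting $\mu_1\perp\mu_2$. Your route is more coupling-theoretic and in fact mirrors the mechanism behind Proposition~\ref{singular} and Corollary~\ref{ergomu}, which the paper uses later for Theorem~\ref{conv1}; the paper's route for Theorem~\ref{unique} is shorter and avoids Prokhorov altogether. Both are valid; yours has the pleasant feature of making the obstruction (a nontrivial coupling of singular measures) completely explicit.
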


Theorem \ref{unique} combined with the representation \eqref{decomp} immediately implies the following statement.

\begin{corollary}\label{corounique}
Let   $M \in \EE$ be  such that $\mu(M)>0$ for every invariant probability measure $\mu$ and for each $x,y\in M$ there exists some
$\alpha_{x,y}>0$ such that
for each $\varepsilon>0$ there exists  $\xi_{x,y}^\eps  \in \widehat  C(\P_x,\P_y)$  which satisfies
\begin{equation}\label{limsupnew}
\limsup_{n \to \infty} \frac 1n \sum_{i=0}^{n-1}\xi_{x,y}^\eps \big( d(X_i,Y_{i})\le \varepsilon\big) \ge \alpha_{x,y}.
\end{equation}

Then there exists at most one invariant probability measure, and if there exists one this measure is ergodic.
\end{corollary}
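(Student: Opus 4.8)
The plan is to deduce the corollary directly from Theorem \ref{unique} by applying it to the product set $M\times M$, and then to upgrade uniqueness among ergodic measures to uniqueness among all invariant measures by invoking the ergodic decomposition \eqref{decomp}.

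First I would let $\mu_1,\mu_2$ be two arbitrary ergodic invariant probability measures and set $\tilde M:=M\times M\in\EE\otimes\EE$. Since by hypothesis $\mu(M)>0$ for every invariant probability measure, we have in particular $\mu_1(M)>0$ and $\mu_2(M)>0$, whence
\[
(\mu_1\otimes\mu_2)(\tilde M)=\mu_1(M)\,\mu_2(M)>0.
\]
For each $(x,y)\in\tilde M$ both coordinates lie in $M$, so the hypothesis of the corollary supplies exactly the data required by Theorem \ref{unique}: a constant $\alpha_{x,y}>0$ and, for every $\eps>0$, a generalized coupling $\xi_{x,y}^\eps\in\widehat C(\P_x,\P_y)$ for which \eqref{limsup} holds (condition \eqref{limsupnew} being verbatim \eqref{limsup}). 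Theorem \ref{unique} then yields $\mu_1=\mu_2$. Thus any two ergodic invariant measures coincide; since every extreme point of $\mathcal{I}_X$ is ergodic, it follows that $\mathcal{I}_X$ has at most one extreme point.

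It remains to transfer this to all invariant measures. If $\mathcal{I}_X=\emptyset$ there is nothing to prove. Otherwise, take any invariant measure $\mu$ and use the representation \eqref{decomp}, $\mu=\int_{\mathcal{P}(E)}\nu\,\kappa(\dd\nu)$, where $\kappa$ is a probability measure concentrated on the extreme points of $\mathcal{I}_X$. By the previous step there is at most one such extreme point; since $\mathcal{I}_X$ is nonempty the decomposition is nonvacuous, so the set of extreme points consists of a single ergodic measure $\mu_\ast$. Consequently $\kappa=\delta_{\mu_\ast}$ and $\mu=\mu_\ast$. Hence every invariant measure equals the unique ergodic measure $\mu_\ast$, which proves both that there is at most one invariant probability measure and that, when it exists, it is ergodic.

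The argument is essentially a bookkeeping reduction to Theorem \ref{unique}, so I do not expect a genuine obstacle. The only points that deserve care are checking that the hypotheses really transfer to $M\times M$ with positive $\mu_1\otimes\mu_2$-measure — which is immediate from the product structure — and using the listed structural facts to guarantee that a nonempty $\mathcal{I}_X$ has (exactly one) extreme point, so that the barycentric representation \eqref{decomp} collapses to a point mass.
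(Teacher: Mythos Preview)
Your proof is correct and follows exactly the approach the paper indicates: apply Theorem~\ref{unique} to the product set $M\times M$ (which has positive $\mu_1\otimes\mu_2$-measure since each factor does) to conclude that any two ergodic invariant measures coincide, and then use the ergodic decomposition~\eqref{decomp} to deduce that every invariant measure equals this unique ergodic one. The paper states only that the corollary is immediate from Theorem~\ref{unique} combined with~\eqref{decomp}; you have simply spelled out the details.
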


The second main result provides the weak convergence of transition probabilities to an invariant probability measure $\mu$ in a somewhat  unusual  form of
the ``weak convergence in probability''.

\begin{theorem}\label{conv1}
I. Assume that  $X$ is $d$-Feller. Let $\mu$ be an  invariant probability measure, and assume  that for some $M \in \EE\otimes \EE$ with  $(\mu\otimes \mu)(M)=1$
the following condition holds true for each $(x,y)\in M$:
\begin{itemize}
\item[\rm{(i)}] $$\lim_{\varepsilon \downarrow 0+}\liminf_{n \to \infty} \sup_{\xi \in C(\P_x,\P_y)} \xi\big( d(X_n,Y_n) \le \varepsilon \big) >0. $$
\end{itemize}
Then
\begin{equation}\label{weak_in_prob}
\mu\left(x:d\Big(P_n(x,\cdot),\mu\Big)>\eps\right)\to 0, \quad n\to \infty, \quad \eps>0;
\end{equation}
that is, $P_n(\cdot,\cdot),\, n\geq 0$, considered as a sequence of $\mathcal{P}(E)$-valued random elements on $(E, \mathcal{E},\mu)$, $d$-converges in probability
to the random element identically equal to $\mu$.

II. The following condition is equivalent to (i):\
\begin{itemize}
\item[\rm{(ii)}] For some $p >1$, $R \ge 1$,
$$
\lim_{\varepsilon \downarrow 0+}\liminf_{n \to \infty} \sup_{\xi \in \widehat  C^R_p(\P_x,\P_y)} \xi\big( d(X_n,Y_n) \le \varepsilon \big) >0. $$
\end{itemize}
Further, the following condition implies (ii) (and (i)):
\begin{itemize}
\item[\rm{(iii)}] $$\sup_{\xi \in \widehat  C(\P_x,\P_y)}\lim_{\varepsilon \downarrow 0+}\liminf_{n \to \infty} \xi \big( d(X_n,Y_n)\le \varepsilon\big) >0.$$
\end{itemize}
\end{theorem}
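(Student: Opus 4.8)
\textit{Part II.} The implication (i)$\Rightarrow$(ii) is immediate: every $\xi\in C(\P_x,\P_y)$ has marginal densities equal to $1$ and hence lies in $\widehat C^R_p(\P_x,\P_y)$ for all $p>1,R\ge1$, so the supremum defining (ii) dominates the one in (i). For (ii)$\Rightarrow$(i) I would first reduce from trajectory couplings to time-$n$ marginals: the quantity $\sup_{\xi\in C(\P_x,\P_y)}\xi(d(X_n,Y_n)\le\varepsilon)$ depends only on $P_n(x,\cdot)$ and $P_n(y,\cdot)$ (a trajectory coupling projects to a coupling of the time-$n$ marginals, and conversely any such coupling extends by disintegrating along the value at time $n$ and invoking the Markov property), and the same passage to the time-$n$ marginal only decreases the $L^p$-norm of the Radon--Nikodym density, since conditional expectation is an $L^p$-contraction. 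It then suffices to compare, for fixed $\lambda_1=P_n(x,\cdot)$, $\lambda_2=P_n(y,\cdot)$ and perturbations $\mu_i\ll\lambda_i$ with density $L^p(\lambda_i)$-norm $\le R$, the suprema $\sup_{\gamma\in C(\mu_1,\mu_2)}\gamma(d\le\varepsilon)$ and $\sup_{\gamma\in C(\lambda_1,\lambda_2)}\gamma(d\le\varepsilon)$. Writing both through Strassen's duality $\sup_\gamma\gamma(d\le\varepsilon)=1-\sup_A\big(\cdot(A)-\cdot(A^\varepsilon)\big)$ (see \cite{Dudley}, Chapter 11), where $A^\varepsilon$ is the closed $\varepsilon$-neighbourhood of $A$, and using H\"older's inequality to transfer each set-function bound from $\mu_i$ to $\lambda_i$ (a $\lambda_i$-small set is $\mu_i$-small with a quantitative loss of exponent $1/q$, $1/p+1/q=1$), I expect the explicit estimate $\sup_{C(\lambda_1,\lambda_2)}\gamma(d\le\varepsilon)\ge(c/2R)^q$ whenever $\sup_{C(\mu_1,\mu_2)}\gamma(d\le\varepsilon)\ge c$; taking $\liminf_n$ and then $\varepsilon\downarrow0$ recovers (i). Finally, for (iii)$\Rightarrow$(ii) I would take a single $\xi\in\widehat C(\P_x,\P_y)$ achieving a positive value in (iii) and truncate it: restrict $\xi$ to the set where both marginal densities are $\le K$ and renormalise. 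The resulting $\xi'$ has bounded marginal densities, hence lies in $\widehat C^R_p(\P_x,\P_y)$ for a suitable $R=R(K)$, and by uniform integrability of the (fixed) $L^1$-densities the discarded mass tends to $0$ as $K\to\infty$; choosing $K$ large enough that this mass is below the value in (iii) yields (ii).

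\textit{Part I: reductions.} By convexity of the coupling distance and invariance, $d(P_n(x,\cdot),\mu)\le\int d(P_n(x,\cdot),P_n(y,\cdot))\,\mu(\dd y)$, so \eqref{weak_in_prob} will follow once $d(P_n(x,\cdot),P_n(y,\cdot))\to0$ in $(\mu\otimes\mu)$-probability; and since $d$ is bounded and weak $d$-convergence is metrised by countably many bounded $d$-Lipschitz functions, it is enough to prove $\mathrm{Var}_\mu(P_nf)\to0$ for each bounded $d$-Lipschitz $f$ (the mean $\int P_nf\,\dd\mu=\mu(f)$ being constant in $n$). A useful structural fact is that $n\mapsto\mathrm{Var}_\mu(P_nf)$ is non-increasing, because $P$ is an $L^2(\mu)$-contraction for the invariant measure $\mu$ (conditional Jensen together with stationarity); hence $\mathrm{Var}_\mu(P_nf)\downarrow V_\infty(f)$ and everything reduces to $V_\infty(f)=0$.

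\textit{Part I: limits and overlap.} Since $\int P_n(x,\cdot)\,\mu(\dd x)=\mu$ is tight, the $\mathcal P(\overline{E}^d)$-valued random elements $x\mapsto P_n(x,\cdot)$ on $(E,\mu)$ form a tight family; along any subsequence I would extract (Prokhorov and Skorokhod) a further subsequence $n_k$ and a measurable family $x\mapsto\nu_x$ with $P_{n_k}(x,\cdot)\Tod\nu_x$ for $\mu$-a.e.\ $x$, barycentre $\int\nu_x\,\mu(\dd x)=\mu$, and $V_\infty(f)=\mathrm{Var}_\mu(\nu_\cdot(f))$. The $d$-Feller property makes $\nu\mapsto\nu P$ $d$-continuous, so $P_{n_k+m}(x,\cdot)\Tod\nu_xP^m$ for every $m$. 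Fixing $(x,y)\in M$ and denoting by $\alpha_{x,y}>0$ the left-hand side of (i), I would take near-optimal couplings $\gamma_k\in C(P_{n_k}(x,\cdot),P_{n_k}(y,\cdot))$ for condition (i), use tightness of their marginals to pass to a limit $\gamma^\ast\in C(\nu_x,\nu_y)$, and invoke upper semicontinuity of $\gamma\mapsto\gamma(\{d\le\varepsilon\})$ (the set $\{d\le\varepsilon\}$ being $d$-closed), then let $\varepsilon\downarrow0$, producing a coupling of $\nu_x,\nu_y$ charging the diagonal. Hence $\|\nu_x\wedge\nu_y\|\ge\tfrac12\alpha_{x,y}>0$ for $(\mu\otimes\mu)$-a.e.\ $(x,y)$; integrating in $y$ against the barycentre identity gives $\|\nu_x\wedge\mu\|>0$ for $\mu$-a.e.\ $x$, and likewise $\|\nu_xP^m\wedge\mu\|\ge\|\nu_x\wedge\mu\|>0$ for all $m$.

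\textit{Part I: the crux.} By the ergodic decomposition \eqref{decomp} one may assume $\mu$ ergodic, since the hypothesis on $\mu\otimes\mu$ passes to $\kappa$-a.e.\ ergodic component. If one can show that the limits $\nu_x$ are themselves invariant measures, the proof is finished at once: the barycentre identity $\int\nu_x\,\mu(\dd x)=\mu$ exhibits the ergodic (hence extreme) measure $\mu$ as an average of invariant measures, forcing $\nu_x=\mu$ for $\mu$-a.e.\ $x$ and thus $V_\infty(f)=0$. Proving this invariance is the main obstacle, and is exactly where both the $d$-Feller property and condition (i) are indispensable: subsequential limits of $P_n(x,\cdot)$ need not be invariant for a general Feller chain (deterministic rotations give non-invariant limits, and there condition (i) indeed fails, the two marginals never coming close), so condition (i) must be used to suppress this oscillatory behaviour. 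I expect the argument to combine the monotonicity of $\mathrm{Var}_\mu(P_nf)$, the persistence of the overlap $\|\nu_xP^m\wedge\mu\|>0$ along all shifts (together with the Krylov--Bogolyubov fact that the $m$-averages of $\nu_xP^m$ converge to an invariant measure, which by extremality is $\mu$), and the Feller continuity, the delicate point being to upgrade these Ces\`aro-type and positive-overlap statements to the genuine convergence $\nu_xP^m\to\mu$ that encodes invariance of $\nu_x$.
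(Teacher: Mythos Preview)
Your arguments are correct. For (iii)$\Rightarrow$(ii) your truncation is exactly the paper's Proposition~\ref{newlemma}, statement II. For (ii)$\Rightarrow$(i) your Strassen--H\"older route works (the bound $(c/2R)^q$ is right: if $\lambda_1(A)>1-c'$ then $\mu_1(A)\ge1-R(c')^{1/q}$, hence $\mu_2(A^\varepsilon)\ge c-R(c')^{1/q}$, hence $\lambda_2(A^\varepsilon)\ge((c-R(c')^{1/q})/R)^q$, and $c'=(c/2R)^q$ closes the loop). The paper takes a more direct path: given $\xi\in\widehat C^R_p(\P,\Q)$, restrict it to $\{d\pi_1(\xi)/d\P\le\gamma^{-1}\}\times\{d\pi_2(\xi)/d\Q\le\gamma^{-1}\}$, multiply by $\gamma$ so the marginals become sub-probabilities dominated by $\P,\Q$, and fill in the missing mass with a product coupling. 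The mass discarded is $\le2\gamma^{p-1}R^p$ by Markov's inequality, so for small $\gamma$ one gets $\zeta\in C(\P,\Q)$ with $\zeta(A)\ge\gamma\alpha/2$ whenever $\xi(A)\ge\alpha$.

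\textbf{Part I.} Here there is a genuine gap: you correctly isolate the ``crux'' as showing the subsequential limits $\nu_x$ are invariant, but you do not prove it, and it is not clear your route can reach it. The pairwise overlap $\|\nu_x\wedge\nu_y\|>0$ together with the barycentre identity $\int\nu_x\,\mu(\dd x)=\mu$ does \emph{not} force $\nu_x=\mu$: there are many disintegrations of $\mu$ into mutually non-singular probability measures (e.g.\ take all $\nu_x$ equivalent to $\mu$ with densities depending on $x$). Nor does persistence of overlap under $P^m$ or Ces\`aro convergence of $\nu_xP^m$ to an invariant measure help without an additional ``improvement'' mechanism; invariance of $\nu_x$ is in fact equivalent to the conclusion you are trying to prove, so aiming for it directly is circular. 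Also, your reduction ``assume $\mu$ ergodic via the decomposition'' is not a reduction: proving the statement for each ergodic component $\nu$ gives $P_n(x,\cdot)\Tod\nu$ in $\nu$-probability, which does not reassemble to convergence towards $\mu$ unless $\kappa=\delta_\mu$. Ergodicity of $\mu$ (and of $\mu\otimes\mu$ for the product chain) must be \emph{proved} from condition (i); the paper does this in Corollaries~\ref{ergomu} and~\ref{ergomu2} using Proposition~\ref{singular}.

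The paper's proof of Part I does not pass through subsequential limits at all. Set $\gamma^{n,\varepsilon}_{x,y}=\sup_{\xi\in C(\P_x,\P_y)}\xi(d(X_n,Y_n)\le\varepsilon)$ and $\Gamma^{n,\varepsilon}=\iint\gamma^{n,\varepsilon}_{x,y}\,\mu(\dd x)\mu(\dd y)$. One first checks $\Gamma^{n,\varepsilon}$ is non-decreasing in $n$, and that $\Gamma:=\lim_{\varepsilon\downarrow0}\lim_n\Gamma^{n,\varepsilon}=1$ suffices for \eqref{weak_in_prob}. Running the \emph{independent} product chain $Z_k=(X_k,Y_k)$ with both marginals $\mu$, the inequality $\gamma^{n+1,\varepsilon}_{Z_k}\ge\E[\gamma^{n,\varepsilon}_{Z_{k+1}}\mid\mathcal F^Z_k]$ (optimising after one independent step is worse than optimising immediately) makes $k\mapsto\gamma^\varepsilon_{Z_k}:=\liminf_n\gamma^{n,\varepsilon}_{Z_k}$ a bounded supermartingale; combined with stationarity and the ergodicity of $\mu\otimes\mu$ this forces $\gamma^\varepsilon_{x,y}$ to be $\mu\otimes\mu$-a.e.\ equal to a constant $\gamma^\varepsilon\ge\gamma>0$. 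The key step---which replaces your missing crux---is a \emph{two-stage coupling}: assuming $\Gamma<1$, pick $N$ and $\varepsilon_1$ so that (a) on a set of $\mu\otimes\mu$-measure $>1-\delta$ one has $\gamma^{n,\varepsilon}_{x,y}>\gamma'$ for all $n\ge N$, and (b) by $d$-Feller continuity, points that are $\varepsilon_1$-close inside a compact $K$ with $\mu(K)>1-\delta$ satisfy $\gamma^{N,\varepsilon}_{x,y}>1-\delta$. For large $n$, couple optimally on $[0,n-N]$ to get $d(X_{n-N},Y_{n-N})\le\varepsilon_1$ with probability $\approx\Gamma$ (arranging, via the trick of Remark~\ref{r45}, that on the complementary event the pair is approximately independent with law $\ll\mu\otimes\mu$); then on $[n-N,n]$ couple optimally from the current position. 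On the ``success'' branch the Feller estimate (b) keeps the components $\varepsilon$-close with probability $\ge1-\delta$; on the ``failure'' branch the uniform bound (a) gives an extra $\gamma'$ chance. Summing yields $\Gamma^{n,\varepsilon}\gtrsim\Gamma+(1-\Gamma)\gamma/2>\Gamma$, a contradiction. This iterative improvement---one coupling attempt followed, upon failure, by another with a guaranteed positive success probability---is the idea your outline lacks.
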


Combining the two statements of the theorem, we directly get the following corollary, formulated in terms similar to those used in Theorem \ref{unique}.

\begin{corollary}\label{coroconverge} Let $X$ be $d$-Feller, $\mu$ be an ergodic invariant probability measure and  $M \in \EE \otimes \EE$ be such that
$\mu \otimes \mu(M)=1$,
and assume that for each $(x,y) \in M$ there exist $\xi_{x,y}\in \widehat C(\P_x,\P_y)$ and $\alpha_{x,y}>0$ such that
$$
\liminf_{n \to \infty} \xi_{x,y} \big( d(X_n,Y_n)\le \varepsilon\big) \geq\alpha_{x,y}
$$
for every $\eps>0$.
Then \eqref{weak_in_prob} holds true.
\end{corollary}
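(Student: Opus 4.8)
The plan is to read the corollary off directly from Theorem~\ref{conv1}, by verifying pointwise on $M$ the sufficient condition (iii) of its second part and then invoking the first part. No new analysis is needed beyond the theorem itself; the whole task is a matching of quantifiers.

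First I would fix $(x,y)\in M$ and work with the single coupling $\xi_{x,y}\in\widehat C(\P_x,\P_y)$ supplied by the hypothesis. For this fixed $\xi_{x,y}$ the map $\varepsilon\mapsto \liminf_{n\to\infty}\xi_{x,y}(d(X_n,Y_n)\le\varepsilon)$ is nondecreasing, since enlarging $\varepsilon$ only enlarges the event $\{d(X_n,Y_n)\le\varepsilon\}$; hence the limit $\lim_{\varepsilon\downarrow 0+}$ exists. Because the assumption gives that each of these $\liminf$'s is bounded below by $\alpha_{x,y}$, the limit satisfies
$$
\lim_{\varepsilon\downarrow 0+}\liminf_{n\to\infty}\xi_{x,y}\big(d(X_n,Y_n)\le\varepsilon\big)\ge\alpha_{x,y}>0.
$$
Taking the supremum over all $\xi\in\widehat C(\P_x,\P_y)$ only increases the left-hand side, so condition (iii) of Theorem~\ref{conv1} holds at this $(x,y)$.

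Since $(x,y)\in M$ was arbitrary, (iii) holds for every $(x,y)\in M$. By the second part of Theorem~\ref{conv1}, (iii) implies (ii), and (ii) is equivalent to (i); hence condition (i) holds for every $(x,y)\in M$. As $\mu$ is invariant, $(\mu\otimes\mu)(M)=1$, and $X$ is $d$-Feller, the first part of Theorem~\ref{conv1} then yields \eqref{weak_in_prob}. Note that the ergodicity of $\mu$ assumed in the corollary is not actually used in this chain of implications; only invariance of $\mu$ is required.

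I expect essentially no obstacle here, as all the analytic content is packaged inside Theorem~\ref{conv1}. The single point deserving a word of care is the interchange of the limit $\lim_{\varepsilon\downarrow 0+}$ with the pointwise lower bound $\alpha_{x,y}$, which is justified by the monotonicity of the inner $\liminf$ in $\varepsilon$ noted above; everything else is bookkeeping.
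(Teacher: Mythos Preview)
Your proof is correct and matches the paper's own justification, which simply states that the corollary follows ``directly'' by combining statements I and II of Theorem~\ref{conv1}. Your additional remark that the ergodicity of $\mu$ is not actually needed is also correct: Theorem~\ref{conv1} only assumes that $\mu$ is invariant (ergodicity is in fact a consequence, via Corollary~\ref{ergomu}).
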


In the following theorem  a stronger (and more typical) type of convergence is obtained at the cost of making a stronger assumption: the e-chain property
(which is essentially the uniform Feller property) instead of the usual Feller one.
\begin{theorem}\label{conv11} Let $X$ be an e-chain w.r.t. $d$, and one of the assumptions (i) -- (iii) of Theorem \ref{conv1} hold true.

Then $P_n(x,\cdot)\Tod \mu$ for $\mu$-a.a. $x\in E$.
\end{theorem}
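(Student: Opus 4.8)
The plan is to obtain the statement directly from Theorem~\ref{conv1} by upgrading the convergence in probability \eqref{weak_in_prob} to $\mu$-almost sure convergence, the additional input being the equicontinuity that is built into the e-chain hypothesis. First I would observe that the e-chain property is stronger than the $d$-Feller property: taking $n=1$ in the defining equicontinuity estimate and recalling that the coupling distance $d$ on $\mathcal{P}(E)$ metrizes $d$-weak convergence, the map $y\mapsto P(y,\cdot)$ is $d$-continuous, hence $x\mapsto\int f\,\dd P(x,\cdot)$ is $d$-continuous for every bounded $d$-continuous $f$. Since, moreover, one of (i)--(iii) is assumed and these all imply (i) by the second part of Theorem~\ref{conv1}, the first part of Theorem~\ref{conv1} applies and yields \eqref{weak_in_prob}. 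Writing $g_n(x):=d\big(P_n(x,\cdot),\mu\big)$, this says precisely that $g_n\to 0$ in $\mu$-measure, and the goal becomes to show $g_n\to 0$ $\mu$-a.e.

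Next I would record that the family $\{g_n\}$ is pointwise $d$-equicontinuous. Indeed, the triangle inequality for the metric $d$ on $\mathcal{P}(E)$ gives
$$
|g_n(x)-g_n(y)|\le d\big(P_n(x,\cdot),P_n(y,\cdot)\big),
$$
so for a fixed $x$ and $\eps>0$ the radius $\delta>0$ furnished by the e-chain property makes $|g_n(x)-g_n(y)|\le\eps$ \emph{simultaneously for all} $n$ whenever $d(x,y)<\delta$. Thus the only structural facts about the chain that survive into the final argument are ``$g_n\to0$ in measure'' and ``$\{g_n\}$ equicontinuous''.

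The decisive step, and the one I expect to be the only genuine obstacle, is the purely measure-theoretic fact that pointwise equicontinuity together with convergence in measure forces $\mu$-a.e.\ convergence (this is false without equicontinuity, e.g.\ for sliding bumps). I would argue by contradiction. Suppose $\mu\big(\{\limsup_n g_n>a\}\big)>0$ for some $a>0$. Since $(E,d)$ is separable, the set of points possessing a $d$-ball of zero $\mu$-measure is a countable union of null basic open sets, hence $\mu$-null; therefore I can pick a point $x_0$ in $\{\limsup_n g_n>a\}$ for which $\mu\big(B_d(x_0,r)\big)>0$ for every $r>0$. Choosing a subsequence with $g_{n_j}(x_0)>a$ and applying equicontinuity at $x_0$ with threshold $a/2$, I obtain $\delta>0$ such that $g_{n_j}>a/2$ on the whole ball $B_d(x_0,\delta)$, whence $\mu\big(g_{n_j}>a/2\big)\ge\mu\big(B_d(x_0,\delta)\big)>0$ for all $j$, contradicting $g_n\to0$ in $\mu$-measure.

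Consequently $\limsup_n g_n=0$ $\mu$-a.e., and since $g_n\ge0$ this gives $g_n\to 0$, i.e.\ $P_n(x,\cdot)\Tod\mu$, for $\mu$-almost all $x\in E$, as claimed. The heart of the proof is this last contradiction, where equicontinuity is used to spread a single point of bad behaviour over a ball of positive measure; everything else is a routine application of the already established Theorem~\ref{conv1} and the elementary estimate $|g_n(x)-g_n(y)|\le d(P_n(x,\cdot),P_n(y,\cdot))$.
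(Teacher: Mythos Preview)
Your argument is correct and follows a genuinely different route from the paper's. The paper does not invoke Theorem~\ref{conv1} as a black box; instead it re-enters the machinery of that proof and shows directly that the e-chain property forces the coupling constant $\gamma^\eps$ (defined there as the a.s.\ value of $\liminf_n\gamma^{n,\eps}_{x,y}$) to equal $1$. Concretely, for any $x,y$ in a small $d$-ball around a support point $x_0$ of $\mu$, the equicontinuity estimate gives $d(P_n(x,\cdot),P_n(y,\cdot))\le\kappa$ uniformly in $n$, and since this coupling distance dominates $\eps(1-\gamma^{n,\eps}_{x,y})$ one gets $\gamma^{n,\eps}_{x,y}\ge 1-\kappa/\eps$; letting $\kappa\downarrow 0$ yields $\gamma^\eps=1$, and the pointwise bound $d(P_n(x,\cdot),\mu)\le\eps+1-\int_E\gamma^{n,\eps}_{x,y}\,\mu(\dd y)$ from the opening of the proof of Theorem~\ref{conv1} then gives the a.e.\ statement. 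Your approach is more modular: you use Theorem~\ref{conv1} only through its conclusion \eqref{weak_in_prob}, and then run a clean ``equicontinuity plus convergence in measure implies convergence a.e.'' argument on the one-point functions $g_n(x)=d(P_n(x,\cdot),\mu)$. This has the advantage of not requiring any knowledge of the coupling quantities $\gamma^{n,\eps}_{x,y}$ or the internals of Theorem~\ref{conv1}; the paper's approach, in turn, yields the extra information $\gamma^\eps=1$ (whereas the proof of Theorem~\ref{conv1} itself explicitly leaves open whether $\gamma=1$).
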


A proper benchmark for Theorem \ref{conv11} is  the \emph{modified Doob theorem}, given in \cite[Theorem 2]{KS15}. Let, for a while, $d(x,y)=1_{x\not=y}$
be the discrete metric (which is however is not included into our setting since it is not continuous). Then by the \emph{Coupling Lemma} (e.g. \cite[Lemma 1]{KS15}) the corresponding probability distance equals $1/2$ of the total
variation distance. In \cite[Theorem 2]{KS15} it is assumed that for $\mu\otimes\mu$-a.a. $(x,y)$ there exists $n=n_{x,y}$ such that
$P_n(x, \cdot)\not\perp P_n(y, \cdot)$, which by the Coupling Lemma is equivalent to existence of a coupling $\xi_{x,y}$ and positive $\alpha_{x,y}$ such that
$$
\xi_{x,y} \big( d(X_n,Y_n)=0\big)\geq \alpha_{x,y}.
$$
One can extend the coupling $\xi_{x,y}$ in such a way that $X_N=Y_N, N\geq n$, hence the above assumption actually coincides with the one from
Corollary \ref{coroconverge}. That is, Theorem \ref{conv11} is a direct analogue of the modified Doob theorem, which operates with weak convergence
of the  transition probabilities instead of total variation convergence.

Note that the discrete metric $d$ is \emph{non-expanding}: since the discrete metric $d$ takes values $0,1$ only,
$$
d(P_n(x,\cdot), P_n(y,\cdot))\leq d(x,y), \quad x,y\in E, \quad n\geq 1.
$$
This property  has the same meaning as the e-chain property, which in Theorem \ref{conv11} is imposed as an additional assumption because  general
 metric $d$  may fail to be non-expanding. The ergodicity under the e-chain (actually, the e-process) property was systematically
studied in \cite{BKS}, \cite{KPS10}, see also \cite[Theorem 3.7]{HMS11}, where  the e-chain property was used essentially without naming it explicitly.
We remark that the e-chain property, although being quite typical for ergodic processes, \emph{does not follow} from the fact that the transition
probabilities converge to the (unique) invariant probability distribution: see Example \ref{ex54} below, which in particular shows that
Proposition 6.4.2 in \cite{MeTwee} is incorrect.
In that concern, the clearly seen advantage of Theorem \ref{conv1} is that there we avoid  the quite non-elementary (and sometimes not easy to verify)
e-chain assumption. We remark that both of the proofs of Theorem \ref{conv1} and Theorem \ref{conv11} exploit the  typical  ``coupling''  idea.
Namely, we make one ``coupling attempt''
with the probability of success being close to the presumably maximal possible one, and then we show that if the latter probability is $<1$,
another ``coupling attempt'' will increase the overall probability of success significantly. In that strategy of the proof,
a kind of the  ``non-expansion'' property is crucial in order to preserve the positive result of the first ``coupling attempt''.
We note that, in the proof of Theorem \ref{conv1}, only the basic $d$-Feller property is used to provide such ``non-expansion''.

Theorem \ref{conv1}  provides the following important corollary.

\begin{corollary}\label{coromixing} Under the conditions of Theorem \ref{conv1}, the (stationary) chain $X$ is  mixing w.r.t. $\P_\mu$.
\end{corollary}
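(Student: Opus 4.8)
The plan is to deduce mixing of the stationary chain $X$ under $\P_\mu$ from the weak-in-probability convergence \eqref{weak_in_prob} established in Theorem \ref{conv1}. The key observation is that mixing \eqref{mixing} is a statement about correlations of bounded measurable $f,g$, but since $\mu$-integrability is all that matters, it suffices to prove the defining relation for a convergence-determining class of test functions and then extend by density. Because $d$ is bounded and $d$-continuous functions separate points, I would first reduce to bounded $d$-Lipschitz (or bounded $d$-continuous) $g$, for which $\nu \mapsto \int g\,\dd\nu$ is $d$-continuous on $\mathcal P(E)$; this is exactly the class against which $\Tod$ and hence \eqref{weak_in_prob} are designed to test.

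The central computation is as follows. Writing $\E_\mu f(X_0) g(X_n)$ and conditioning on $X_0$ via the Markov property, one has
\[
\E_\mu f(X_0) g(X_n) = \int_E f(x)\Big(\int_E g(y)\,P_n(x,\dd y)\Big)\,\mu(\dd x).
\]
For bounded $d$-continuous $g$, the inner integral is precisely $\langle g, P_n(x,\cdot)\rangle$, and \eqref{weak_in_prob} says that $P_n(x,\cdot)$ converges to $\mu$ in $d$ \emph{in $\mu$-probability}. Since $g$ is bounded, the map $\nu\mapsto\int g\,\dd\nu$ is bounded and $d$-continuous, so $\int g\,\dd P_n(x,\cdot) \to \int g\,\dd\mu$ in $\mu$-probability as $n\to\infty$. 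This sequence is uniformly bounded by $\sup|g|$, so by dominated convergence (convergence in probability plus uniform boundedness gives convergence in $L^1(\mu)$) the whole integral converges:
\[
\E_\mu f(X_0) g(X_n) \to \int_E f(x)\Big(\int_E g\,\dd\mu\Big)\,\mu(\dd x) = \E_\mu f(X_0)\,\E_\mu g(X_0),
\]
where the last equality uses $\mathrm{Law}(X_0)=\mu$. This is exactly \eqref{mixing} for bounded $d$-continuous $g$ and bounded measurable $f$ (no continuity on $f$ is needed, as $f(X_0)$ is simply a fixed bounded random variable in the expectation).

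The remaining step is to upgrade from bounded $d$-continuous $g$ to arbitrary bounded \emph{measurable} $g$. Here I would use a standard functional-monotone-class argument: the class of bounded measurable $g$ for which \eqref{mixing} holds (for every fixed bounded measurable $f$) is closed under bounded pointwise limits and contains the bounded $d$-continuous functions, which generate $\EE$ (recall that the trace $\sigma$-algebra of $\mathcal B(\overline E^d)$ coincides with $\EE$), so the class is all of bounded measurable functions. The main obstacle I anticipate is the interchange of limits in this last density step: one must ensure that the correlation functional $g\mapsto \lim_n(\E_\mu f(X_0)g(X_n) - \E_\mu f(X_0)\E_\mu g(X_0))$ is well-behaved under bounded pointwise approximation, which is guaranteed precisely by the dominated-convergence/uniform-boundedness structure above and by stationarity (so that $\E_\mu|g(X_n)|=\E_\mu|g(X_0)|$ is controlled uniformly in $n$). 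Once the monotone class lemma is invoked correctly, \eqref{mixing} follows for all bounded measurable $f,g$, which is the definition of mixing.
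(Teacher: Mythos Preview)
Your proposal is correct and follows essentially the same route as the paper. The paper's proof also starts from \eqref{weak_in_prob}, first treats $d$-Lipschitz $g$ via the pointwise bound $\big|\E_\mu[g(X_n)\mid X_0]-\E_\mu g(X_0)\big|\le \mathrm{Lip}(g)\,d(P_n(X_0,\cdot),\mu)$ together with dominated convergence, and then passes to general bounded measurable $g$ by approximation, using that the time shift is an isometry on $L_2(E^\infty,\P_\mu)$ (so that $\E_\mu|g(X_n)-g_k(X_n)|^2$ is independent of $n$). Your version differs only cosmetically: you use bounded $d$-continuous rather than $d$-Lipschitz test functions in the first step, and a functional monotone-class argument instead of $L_2$ density in the second; in both steps the crucial uniform-in-$n$ control comes from stationarity, exactly as in the paper.
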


This corollary gives a good  prerequisite for our  third main result, which provides a sufficient condition for the transition probabilities $P_n(x,.)$ of a given $x \in E$ to converge to the invariant measure $\mu$.

\begin{theorem}\label{conv2}
Let $\mu$ be an  invariant probability measure and $X$ be mixing w.r.t. $\P_\mu$. Fix $x \in E$ and
assume that there exists a set $M \in \EE$ such that $\mu(M)>0$ and for every $y \in M$ there exists some $\xi_{x,y} \in \widehat C(\P_x,\P_y)$  such that
$\pi_1(\xi_{x,y})\sim \P_x$ and
\begin{equation}\label{prob1}
\lim_{n \to \infty} \xi_{x,y} \big( d(X_n,Y_n) \le \varepsilon\big) =1
\end{equation}
for every $\varepsilon>0$.
Then $P_n(x,\cdot)\Tod \mu$.
\end{theorem}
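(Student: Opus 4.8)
The plan is to prove the equivalent assertion that $\E_x f(X_n)\to\int_E f\,\dd\mu$ for every bounded $d$-Lipschitz $f\colon E\to\R$, since such functions are convergence-determining for $\Tod$. I may assume $|f|\le 1$ and write $L$ for the $d$-Lipschitz constant; note $f$ is automatically $\rho$-continuous, so no measurability issues arise. The first ingredient is a \emph{mixing lemma}: if $G\in L^1(\P_\mu)$ with $G\ge 0$ and $\E_\mu G=1$, then $\E_\mu\big[G\,f(X_n)\big]\to\int f\,\dd\mu$. I would prove this by approximating $G$ in $L^1(\P_\mu)$ by bounded cylinder functions $G_k$ (depending on finitely many coordinates); for such $G_k$, conditioning on the last coordinate of $G_k$ and using the Markov property and stationarity reduces the cylinder-versus-one-coordinate correlation $\E_\mu[G_k f(X_n)]$ to the one-coordinate statement \eqref{mixing}, giving $\E_\mu[G_k f(X_n)]\to\E_\mu[G_k]\int f\,\dd\mu$; letting $k\to\infty$ and using $\E_\mu G_k\to 1$ yields the claim. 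This is where mixing enters: any trajectory law absolutely continuous with respect to the stationary mixing law $\P_\mu$ has its $n$-th marginal converging weakly to $\mu$.

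Next I would average the couplings over $M$. Assuming the family $y\mapsto\xi_{x,y}$ can be chosen measurably (a routine measurable-selection point), set $\bar\xi:=\mu(M)^{-1}\int_M\xi_{x,y}\,\mu(\dd y)\in\mathcal P(E^\infty\times E^\infty)$. Then (a) $\pi_1(\bar\xi)\sim\P_x$, since each $\pi_1(\xi_{x,y})\sim\P_x$ and $\mu(M)>0$; (b) $\pi_2(\bar\xi)\ll\P_\mu$, because $\pi_2(\bar\xi)$ is dominated by a constant multiple of $\int_M\P_y\,\mu(\dd y)\le\P_\mu$; and (c) $\bar\xi\big(d(X_n,Y_n)\le\varepsilon\big)\to 1$ for each $\varepsilon>0$, by dominated convergence from \eqref{prob1}. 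The essential gain of averaging is (b): the second marginal is now absolutely continuous with respect to the \emph{mixing} law $\P_\mu$, not merely with respect to some $\P_y$ (a single $y$ would not suffice, as $\P_y\not\ll\P_\mu$ in general).

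Write $\Phi:=\dd\pi_1(\bar\xi)/\dd\P_x$, so $\Phi>0$ $\P_x$-a.s. by (a), $\dd\P_x/\dd\pi_1(\bar\xi)=1/\Phi$, and $\E_{\bar\xi}[1/\Phi]=1$. The key identity is $\E_x f(X_n)=\E_{\bar\xi}\big[\Phi(X)^{-1}f(X_n)\big]$, where $\Phi(X)$ denotes $\Phi$ on the first coordinate. First I swap $f(X_n)$ for $f(Y_n)$: since $|f(X_n)-f(Y_n)|\le L\,d(X_n,Y_n)$, splitting on $\{d(X_n,Y_n)\le\varepsilon\}$ gives
$$\Big|\E_{\bar\xi}\big[\Phi(X)^{-1}(f(X_n)-f(Y_n))\big]\Big|\le L\varepsilon+2\,\E_{\bar\xi}\big[\Phi(X)^{-1}\mathbf 1_{\{d(X_n,Y_n)>\varepsilon\}}\big],$$
and the last term tends to $0$ because $\Phi(X)^{-1}\in L^1(\bar\xi)$ is uniformly integrable while $\bar\xi(d(X_n,Y_n)>\varepsilon)\to 0$ by (c); letting $\varepsilon\downarrow 0$ shows $\E_x f(X_n)-\E_{\bar\xi}[\Phi(X)^{-1}f(Y_n)]\to 0$. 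Then I identify the remaining term: the probability measure $\nu:=\Phi(X)^{-1}\,\bar\xi$ has second marginal $\pi_2(\nu)\ll\pi_2(\bar\xi)\ll\P_\mu$ (reweighting by a function of the first coordinate cannot create mass on $\P_\mu$-null sets of the second), so with $G:=\dd\pi_2(\nu)/\dd\P_\mu$ the mixing lemma gives $\E_{\bar\xi}[\Phi(X)^{-1}f(Y_n)]=\E_\mu[G\,f(X_n)]\to\int f\,\dd\mu$. Combining the two steps yields $\E_x f(X_n)\to\int f\,\dd\mu$.

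The step I expect to be the main obstacle is this $X$-side transfer. The naive route of comparing $P_n(x,\cdot)$ directly with the $X$-marginal of the coupling is blocked by $\Phi$: removing it leads back to the unknown quantities $P_m f$ and hence to circularity. The resolution is to use the \emph{equivalence} $\pi_1(\bar\xi)\sim\P_x$ in the direction $\P_x\ll\pi_1(\bar\xi)$, transporting the test function onto the $Y$-coordinate, which is absolutely continuous with respect to the mixing law $\P_\mu$; this is precisely why equivalence (rather than mere absolute continuity) of the first marginal is assumed. The accompanying technical point requiring care is the uniform-integrability argument used to exchange $f(X_n)$ and $f(Y_n)$ against the possibly unbounded weight $\Phi(X)^{-1}$.
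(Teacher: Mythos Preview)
Your argument is correct and takes a genuinely different route from the paper's. Both proofs begin identically: invoke measurable selection (the paper's Proposition~3.1) and average over $M$ to produce a single generalized coupling $\bar\xi$ with $\pi_1(\bar\xi)\sim\P_x$, $\pi_2(\bar\xi)\ll\P_\mu$, and $\bar\xi(d(X_n,Y_n)\le\varepsilon)\to1$. From there the paths diverge. The paper argues by contradiction: it first establishes tightness of $\{P_n(x,\cdot)\}$ in $\overline{E}^d$, extracts a subsequential weak limit $\nu\neq\mu$, then works with Ces\`aro sums $U_n=\frac1n\sum_{k\le n}f(X_{m_k})$. Mixing is used in $L^2$ form to get $U_{n_r}\to\bar f$ $\P_\mu$-a.s.\ along a further subsequence; this is transferred to $\pi_2(\bar\xi)$ by absolute continuity, then to the $X$-side via the coupling, then to $\P_x$ via $\P_x\ll\pi_1(\bar\xi)$, contradicting $\E_xU_n\to\int f\,\dd\nu$.

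Your approach bypasses tightness, subsequences, and Ces\`aro averaging entirely. The key device is the reweighting $\nu=\Phi(X)^{-1}\bar\xi$, which turns the change-of-measure identity $\E_xf(X_n)=\E_{\bar\xi}[\Phi(X)^{-1}f(X_n)]$ into a direct computation: swap $f(X_n)$ for $f(Y_n)$ by uniform integrability of the single $L^1(\bar\xi)$ function $\Phi(X)^{-1}$, then observe $\pi_2(\nu)\ll\P_\mu$ and apply the mixing lemma. This is shorter and isolates cleanly where each hypothesis enters; in particular it makes transparent that only the direction $\P_x\ll\pi_1(\bar\xi)$ of the equivalence is used, and only through integrability of $1/\Phi$. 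The paper's route, by contrast, is closer in spirit to classical ergodic-theoretic arguments and may feel more robust if one only has control along subsequences. One small imprecision in your write-up: in (b) you say $\pi_2(\bar\xi)$ is ``dominated by a constant multiple of $\int_M\P_y\,\mu(\dd y)$'', which is not generally true since $\dd\pi_2(\xi_{x,y})/\dd\P_y$ need not be bounded; but absolute continuity $\pi_2(\bar\xi)\ll\P_\mu$ (which is all you use) follows directly from $\pi_2(\xi_{x,y})\ll\P_y$ for $\mu$-a.e.\ $y$.
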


Combining Corollary \ref{coroconverge}, Corollary \ref{coromixing}, and Theorem \ref{conv2} we easily derive the following corollary, which provides
weak convergence of $P_n(x, \cdot)$ for any starting point $x$ in terms of generalized couplings.

\begin{corollary}\label{coroconv} Let $X$ be $d$-Feller, and assume that for any $(x,y) \in E\times E$ there exists some $\xi_{x,y} \in \widehat C(\P_x,\P_y)$  such that
$\pi_1(\xi_{x,y})\sim \P_x$ and \eqref{prob1} holds true for every $\varepsilon>0$.

Then there exists at most one invariant probability measure, and if such a measure $\mu$ exists then $P_n(x,\cdot)\Tod \mu$ for any $x\in E$.
\end{corollary}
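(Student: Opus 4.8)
The statement has two parts --- ``at most one invariant measure'' and ``convergence $P_n(x,\cdot)\Tod\mu$ whenever an invariant $\mu$ exists'' --- and the plan is to obtain the first from Corollary \ref{corounique} and the second from Corollary \ref{coromixing} together with Theorem \ref{conv2}. The guiding observation is that the hypothesis \eqref{prob1}, namely $\lim_n \xi_{x,y}(d(X_n,Y_n)\le\varepsilon)=1$, is stronger than each of the $\liminf$/$\limsup$-type conditions demanded by these results; consequently every ingredient applies after only a short verification, and no genuine analysis is required.

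For uniqueness I would apply Corollary \ref{corounique} with $M=E$, which satisfies $\mu(E)=1>0$ for every invariant measure. Fix $x,y\in E$ and take the given coupling $\xi_{x,y}\in\widehat C(\P_x,\P_y)$; since it does not depend on $\varepsilon$ it serves as $\xi_{x,y}^\varepsilon$ for every $\varepsilon>0$. As the Ces\`aro averages of a sequence converging to $1$ again converge to $1$, \eqref{prob1} yields $\limsup_{n\to\infty}\frac1n\sum_{i=0}^{n-1}\xi_{x,y}(d(X_i,Y_i)\le\varepsilon)=1$, so \eqref{limsupnew} holds with $\alpha_{x,y}=1$. Corollary \ref{corounique} then gives at most one invariant measure, ergodic if it exists.

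Now suppose an invariant measure $\mu$ exists; by the previous paragraph it is unique and ergodic. To produce the mixing hypothesis needed by Theorem \ref{conv2} I would invoke Corollary \ref{coromixing}, which requires the conditions of Theorem \ref{conv1}. The chain is $d$-Feller by assumption, and taking $M=E\times E$ (of full $\mu\otimes\mu$-measure) the same computation shows, for each $(x,y)$, that $\liminf_{n\to\infty}\xi_{x,y}(d(X_n,Y_n)\le\varepsilon)=1$ for every $\varepsilon$, whence $\lim_{\varepsilon\downarrow0}\liminf_{n\to\infty}\xi_{x,y}(d(X_n,Y_n)\le\varepsilon)=1$ and the supremum over $\widehat C(\P_x,\P_y)$ in condition (iii) is at least $1>0$. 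Thus (iii) holds on $M$, and since (iii) implies (i), the conditions of Theorem \ref{conv1} are met; Corollary \ref{coromixing} then makes $X$ mixing w.r.t. $\P_\mu$. (The same hypotheses also feed Corollary \ref{coroconverge}, so that \eqref{weak_in_prob} holds as well, but this weaker in-probability statement is not needed for what follows.)

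Finally I would apply Theorem \ref{conv2}. Fix an arbitrary $x\in E$ and take the set $M=E$ with $\mu(M)=1>0$; for every $y\in E$ the hypothesis supplies $\xi_{x,y}\in\widehat C(\P_x,\P_y)$ with $\pi_1(\xi_{x,y})\sim\P_x$ and \eqref{prob1}, which are exactly the requirements of that theorem. Hence $P_n(x,\cdot)\Tod\mu$, and as $x$ was arbitrary this holds for all $x\in E$, completing the argument. There is no real obstacle in this assembly; the only point requiring attention is the bookkeeping of which earlier result delivers which half of the conclusion --- in particular recognizing that the mixing assumption of Theorem \ref{conv2} is not free but must be manufactured via Corollary \ref{coromixing} after checking condition (iii) of Theorem \ref{conv1}.
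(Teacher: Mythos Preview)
Your proof is correct and follows essentially the same route the paper indicates: it assembles Corollary \ref{coromixing} (to obtain mixing via condition (iii) of Theorem \ref{conv1}) and Theorem \ref{conv2} (to obtain $P_n(x,\cdot)\Tod\mu$), exactly as the paper's one-line derivation ``Combining Corollary \ref{coroconverge}, Corollary \ref{coromixing}, and Theorem \ref{conv2}'' intends. The only cosmetic difference is that you obtain uniqueness up front from Corollary \ref{corounique}, whereas the paper cites Corollary \ref{coroconverge}; both are valid entry points, and your verification that \eqref{prob1} feeds each of the required $\limsup$/$\liminf$ conditions is accurate.
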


\begin{remark}\label{rem29} The assumption of Corollary \ref{coroconv} is well designed to be easily applied in various particular settings; we illustrate this in Section
\ref{sSDDE} below considering Markov chains generated by stochastic functional delay equations (SFDEs) and stochastic partial differential equations (SPDEs).
On the other hand, this assumption is quite precise and can not  be essentially weakened. For instance, the required statement may fail
if one assumes only \eqref{prob1} for some $\xi_{x,y} \in \widehat C(\P_x,\P_y)$ without the additional condition $\pi_1(\xi_{x,y})\sim \P_x$,
see Example \ref{ex55} below.
\end{remark}

\begin{remark} The \emph{existence} of an invariant probability measure is a much easier topic, studied in great detail in the literature, and we do not address it here,
referring e.g. to \cite{DZ96}.
\end{remark}

\begin{remark}\label{rem_cont} All the main statements, formulated above in the discrete-time case, have  straightforward analogues in the
 continuous-time case. Namely, if it is assumed that the process $X_t, t\in [0,\infty)$ for any  $x$ has a c\`adl\`ag modification with $X_0=x$,
we can repeat the arguments literally, with the space $E^\infty$ changed to $\mathbb{D}([0,\infty), E)$ and $\P_x, x\in E$ being the respective laws
of $X$ in $\mathbb{D}([0,\infty), E)$. We remark that in some specific but important cases it may happen that the Markov process
$X_t, t\in [0,\infty)$ is stochastically continuous, but fails to have a   c\`adl\`ag modification. For such an example in the framework of L\'evy
driven SPDEs we refer to \cite{6Authors}. In this case the proofs of Theorem \ref{conv1}, Theorem \ref{conv11}  and Theorem \ref{conv2} also can be
adapted, and the statements of these theorems and respective Corollaries \ref{coroconverge}, \ref{coromixing}, and  \ref{coroconv}  hold true.
The technical difficulty which arise here is that now we do not have a ``good'' space of trajectories, hence the statements of Proposition
\ref{jan_cor} and Proposition \ref{prop} on the measurable choice can not be applied directly. These statements can be modified properly,
but in order not to overburden the exposition we will not go into further details.
\end{remark}

\section{Proofs of Theorem \ref{unique} and Theorem \ref{conv2}}\label{s3} In this section we provide  proofs of two of our main results, which are comparatively
simple and  are mainly based on the  Ergodic theorem.

\begin{proof}[Proof of Theorem \ref{unique}]
 If $\mu_1\not=\mu_2$ then $\mu_1\perp\mu_2$.
The fact that $E$ is Polish implies that for any probability measure $\mu$ and each set $A \in \EE$, we have $\mu(A)=\sup \mu(K)$, where the supremum is taken
over all  compact subsets of $E$ which are contained in $A$ (this is sometimes called {\em Ulam's theorem} or {\em inner regularity} of $\mu$; e.g. \cite{Billingsley},
Theorems 1.1 and 1.4). Therefore,
for every $m\geq 1$ there exist compact sets $K^m_{1,2}$ such that $K^m_1\cap K^m_2=\emptyset$ and $\mu_i(K_i^m)>1-1/m$. Since $d$ is continuous and $K^m_{1,2}$ are compact and disjoint, the $d$-distance between $K^m_1$ and $K^m_{2}$ is positive.
Then there exists a $d$-Lipschitz function $f^m:E \to [0,1]$ such that $f^m|_{K_1^m}\equiv 0$, $f^m|_{K_2^m}\equiv 1$.

Choose $(x,y)  \in M$, and let $\alpha_{x,y}$ be  as in the statement of the theorem. We can and will assume in addition that  $x,y$ are chosen such that
\begin{equation}\label{add}
\frac 1n \sum_{i=0}^{n-1}f^m (X_i)\to \int f^m \, \dd\mu_1\quad \hbox{a.s. w.r.t. }\mathbb{P}_x, \quad \frac 1n\sum_{i=0}^{n-1}f^m (Y_{i})\to \int f^m\,
\dd\mu_2\quad \hbox{a.s. w.r.t. }\mathbb{P}_y
\end{equation}
for every $m\geq 1$. Take  $m_0>2/\alpha_{x,y}$ and fix $\varepsilon >0$ such that
$$
\varepsilon <\frac{\alpha_{x,y}-2/m_0}{\alpha_{x,y} \mathrm{Lip}(f^{m_0})}.
$$
Let $\xi_{x,y}^\eps  \in \widehat  C(\P_x,\P_y)$ be as in the statement of the theorem, then
$$
\liminf_{n\to \infty}\E^{\xi_{x,y}^\eps}\left(\frac 1n \sum_{i=0}^{n-1}(f^{m_0} (Y_{i})-f^{m_0} (X_i))\right)\leq (1-\alpha_{x,y})+\varepsilon \mathrm{Lip}(f^{m_0})
\alpha_{x,y}<1-2/{m_0}.
$$

Because the distribution of $\{X_i\}$ (resp. $\{Y_i\}$) w.r.t. $\xi_{x,y}^\eps$ is absolutely continuous w.r.t. $\mathbb{P}_x$ (resp. $\mathbb{P}_y$) and $f^{m_0}$ is
bounded, it follows from (\ref{add}) that
$$
\E^{\xi_{x,y}^\eps}\left(\frac 1n\sum_{i=1}^n(f^{m_0} (Y_{i})-f^{m_0} (X_i))\right)\to \int f^{m_0} \, \dd\mu_2 -\int f^{m_0} \, \dd\mu_1\geq 1-2/{m_0},\quad n\to \infty,
$$
which is a contradiction. Therefore $\mu_1=\mu_2$ follows.
\end{proof}
In the proof Theorem \ref{conv2} we will use the following proposition, whose proof is postponed to Appendix B.
\begin{proposition}\label{jan_cor} Under the assumptions of Theorem \ref{conv2},
there exists a measurable mapping
$$
M\ni y\mapsto \xi_{x,y} \in \mathcal{P}(E^\infty\times E^\infty)
$$
such that for $\mu$-a.a. $y\in M$ one has $\xi_{x,y} \in \widehat C(\P_x,\P_y)$, $\pi_1(\xi_{x,y})\sim \P_x$, and \eqref{prob1} holds true.
\end{proposition}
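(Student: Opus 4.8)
The plan is to recognize this as a measurable uniformization (selection) problem and to solve it by verifying that the graph of admissible couplings is Borel and then invoking the Jankov--von Neumann selection theorem. Concretely, I would consider the set
$$
G=\Big\{(y,\xi)\in M\times\mathcal{P}(E^\infty\times E^\infty):\ \xi\in\widehat C(\P_x,\P_y),\ \pi_1(\xi)\sim\P_x,\ \lim_{n}\xi\big(d(X_n,Y_n)\le 1/k\big)=1\ \forall k\in\N\Big\},
$$
whose $y$-sections are exactly the couplings meeting all requirements of the proposition. By the hypothesis of Theorem \ref{conv2}, every section over $M$ is nonempty, so $M$ is the projection of $G$ onto its first coordinate. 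Since $M\in\EE$ is a standard Borel space and $\mathcal{P}(E^\infty\times E^\infty)$ is Polish, once $G$ is shown to be Borel the Jankov--von Neumann theorem yields a universally measurable map $y\mapsto\xi_{x,y}$ with $(y,\xi_{x,y})\in G$; as $\mu$ is a Borel measure, such a map is $\mu$-measurable, and by Lusin's theorem it may be modified on a $\mu$-null set to a Borel map, giving the asserted measurable selection.

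The core of the argument is therefore the Borel measurability of $G$, which I would establish condition by condition. First, by the Ionescu--Tulcea construction the map $y\mapsto\P_y\in\mathcal{P}(E^\infty)$ is Borel, while the marginal projections $\xi\mapsto\pi_i(\xi)$ are weakly continuous. The coupling condition \eqref{prob1} is handled by noting that, because $d$ is $\rho$-continuous, each event $\{d(X_n,Y_n)\le\varepsilon\}$ is $\rho$-closed; hence $\xi\mapsto\xi\big(d(X_n,Y_n)\le\varepsilon\big)$ is upper semicontinuous and in particular Borel. The requirement that $\lim_n\xi\big(d(X_n,Y_n)\le 1/k\big)=1$ for every $k$ is monotone in the threshold, so it reduces to $\liminf_n\xi\big(d(X_n,Y_n)\le 1/k\big)\ge 1$ for all $k\in\N$, a countable combination of Borel functions and hence Borel; note this part does not depend on $y$.

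It remains to treat the absolute-continuity and equivalence requirements $\pi_1(\xi)\ll\P_x$, $\P_x\ll\pi_1(\xi)$, and $\pi_2(\xi)\ll\P_y$, and this is the step I expect to be the main obstacle. The crux is the auxiliary fact that the relation $\{(\sigma,\tau)\in\mathcal{P}(S)\times\mathcal{P}(S):\sigma\ll\tau\}$ is Borel for any Polish $S$. I would prove this using a countable algebra $\mathcal{A}$ generating $\EE$ (available by separability) together with the $(\varepsilon,\delta)$-characterization of absolute continuity: $\sigma\ll\tau$ holds iff for every $k$ there is $m$ with $\tau(A)<1/m\Rightarrow\sigma(A)\le 1/k$ for all $A\in\mathcal{A}$, the passage from $\mathcal{A}$ to all Borel sets being justified by approximating in the measure algebra of $\sigma+\tau$. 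Since each map $\sigma\mapsto\sigma(A)$ is Borel, the displayed condition is a countable Boolean combination of Borel sets, hence Borel. Composing with the Borel maps $y\mapsto\P_y$ and the continuous maps $\xi\mapsto\pi_i(\xi)$ then shows that all three conditions cut out Borel subsets of $M\times\mathcal{P}(E^\infty\times E^\infty)$, and intersecting the finitely many Borel conditions gives that $G$ is Borel, which completes the plan.
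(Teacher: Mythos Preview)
Your proposal is correct and follows essentially the same route as the paper: both recognize the problem as measurable selection, invoke a Jankov-type theorem (the paper uses Jankov's lemma from Dynkin--Yushkevich applied to the surjection $\xi\mapsto\varrho_2(\xi)$, you use Jankov--von Neumann uniformization of the graph $G$), and identify the Borel-measurability of the relation $\sigma\ll\tau$---proved via the $(\varepsilon,\delta)$-characterization restricted to a countable generating algebra---as the key technical step. Your treatment is in fact slightly more careful in that you explicitly include condition \eqref{prob1} in the defining set, whereas the paper's $\mathbb{Y}$ omits it (an apparent oversight, repairable by exactly the argument you give).
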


\begin{proof}[Proof of Theorem \ref{conv2}] Define
the measure $\xi\in \mathcal{P}(E^\infty\times E^\infty)$  as follows:
$$
\xi(A)={1\over \mu(M)}\int_{M}\xi_{x,y}(A)\mu(dy), \quad A\in\mathcal{E}^{\otimes\infty}\otimes \mathcal{E}^{\otimes\infty},
$$
where $\{\xi_{x,y}, y\in M\}$ is defined in Proposition \ref{jan_cor}.
Because $\pi_1(\xi_{x,y})\sim \P_x, \pi_2(\xi_{x,y})\ll \P_\mu$ for $\mu$-a.a. $y\in M$, we have that
 $\xi\in \widehat C(\P_x,\P_\mu)$ and $\pi_1(\xi)\sim \P_x$. In addition, we have
 \begin{equation}\label{prob11}
\lim_{n \to \infty} \xi \big( d(X_n,Y_n) \le \varepsilon\big) =1
\end{equation}
for every $\varepsilon>0$.

Denote
$$
\mu_n(C)=\xi(X_n\in C), \quad \nu_n(C)=\xi(Y_n\in C), \quad  C\in\mathcal{E}.
$$
Observe first that the family $\{\nu_n, n\geq 1\}\subset \mathcal{P}(E)$ is tight: recall that  $\pi_2(\xi)\ll \P_\mu$ and thus for every $\eps>0$ there exists $\delta>0$ such that for any
$B\in  \mathcal{E}^{\otimes \infty}$ with $\P_\mu(B)\leq \delta$
one has $\pi_2(\xi)\big(B\big)\leq \eps$. Therefore
$$
\nu_n(K)=\xi\big(Y_n\in K\big)\leq \eps, \quad n\geq 1
$$
if a compact set $K\subset E$ is chosen such that $\mu(K)\geq 1-\delta.$

Since the embedding $(E,\rho)$ into $(\overline{E}^d,d)$ is continuous, this yields that $\{\nu_n, n\geq 1\}\subset \mathcal{P}(\overline{E}^d)$ is tight, as well. Then using using (\ref{prob11}), we deduce that $\{\nu_n, n\geq 1\}\subset \mathcal{P}(\overline{E}^d)$ is tight, as well. Because $\P_x\ll \pi_1(\xi)$, this finally yields the tightness of $\{P_n(x,.), n\geq 1\}\subset \mathcal{P}(\overline{E}^d)$.

The metric space $(\overline{E}^d,d)$ is complete by the construction and is separable since $(E,\rho)$ is separable. Hence if we assume that
  that $P_n(x,.), n\geq 1$ does not weakly converge to $\mu$ w.r.t. $d$, then  there exists some probability measure $\nu \neq \mu$ on $\overline{E}^d$ and a subsequence
$P_{m_k}(x,.) \Tod\nu$. Fix a bounded $d$-Lipschitz continuous function $f:\overline{E}^d\to \R$ such that $\bar f:=\int_E f\,\dd \mu \neq \int_{\overline{E}^d} f\,\dd \nu$ and put
$$
U_n=\frac 1n \sum_{k=1}^n f(X_{m_k}).
$$
Recall that the chain $X$ is stationary and mixing w.r.t. $\P_{\mu}$, hence
$$
\mathrm{Cov}_\mu(f(X_m), f(X_n)):=\E_\mu  \Big( f(X_m)f(X_n)\Big) -(\bar f)^2\to 0, \quad |n-m|\to \infty.
$$
Then
$$
\E_{\mu}(U_n-\bar f)^2=\frac 1{n^2}\sum_{i=1}^n\sum_{j=1}^n  \mathrm{Cov}_\mu(f(X_{m_i}), f(X_{m_j}))\to 0, \quad n\to \infty,
%
$$
and furthermore there exists a sequence $\{n_r\}$ such that $U_{n_r}\to \bar f, r\to \infty$ a.s. w.r.t. $\P_\mu$. Because $\pi_2(\xi)\ll \P_\mu$, we have finally that
$U_{n_r}\to \bar f, r\to \infty$ a.s. with respect to
$\pi_2(\xi)$.

Recall that $f$ is Lipschitz continuous,
then by (\ref{prob11}) the sequence
$$
\Delta_m:=|f(X_m)-f(Y_m)| \leq \mathrm{Lip}(f)d(X_m,Y_m), \quad m\geq 1
$$
converges to $0$ in $\xi$-probability. Because $f$ is bounded, this convergence  holds true also in the mean sense, which then implies
$$
\frac 1n \sum_{k=1}^n f(X_{m_k})-\frac 1n \sum_{k=1}^n f(Y_{m_k})\to 0, \quad n\to \infty
$$
in the mean sense w.r.t. $\xi$. Since $\{Y_n\}$ has the law $\pi_2(\xi)$ w.r.t. $\xi$ and $U_{n_r}\to \bar f, r\to \infty$ a.s. w.r.t. $\pi_2(\xi)$, we have then
$$
\Big( \frac 1{n_r} \sum_{k=1}^{n_r} f(X_{m_k}), \quad \frac 1{n_r} \sum_{k=1}^{n_r}f(Y_{m_k})\Big)\to (\bar f,\bar f), \quad r\to \infty
$$
in $\xi$-probability,
 hence $U_{n_r}\to \bar f, r\to \infty$ in probability with respect to $\pi_1(\xi)$. Because $\P_x\ll \pi_1(\xi)$, we deduce finally that
  $U_{n_r}\to \bar f, r\to \infty$ in probability with respect to  $\P_x$. Since $f$ is bounded, the sequence $U_{n_r}, r\geq 1$ is bounded by the same constant, and then
  we have
  $$
  \E_x  U_{n_r}\to \bar f, \quad r\to \infty.
  $$
On the other hand, by the assumption $P_{m_k}(x,.) \Tod \nu$, we have $\int_{\overline{E}^d} f(z)P_{m_k}(x,\dd z)\to \int_{\overline{E}^d} f\dd\nu$ and therefore
$$\E_x\frac 1n \sum_{k=1}^n f(X_{m_k})=\frac 1n \sum_{k=1}^n\int_E f(z)P_{m_k}(x,\dd z)=\frac 1n \sum_{k=1}^n\int_{\overline{E}^d} f(z)P_{m_k}(x,\dd z)\to \int_E f \,\dd \nu \neq \bar f,\quad n\to \infty,$$ which is a contradiction
finishing the proof of the theorem.
\end{proof}

\section{Proofs of Theorem \ref{conv1}, Theorem \ref{conv11}, and Corollary \ref{coromixing}}\label{s4} In this section
we prove Theorem \ref{conv1}, which is the most complicated of our  main results. We also  show how (parts of) the proof can be modified in order to obtain
Theorem \ref{conv11}, and prove  Corollary \ref{coromixing}.

Before proceeding with these proofs,  we  formulate several auxiliary results.

\begin{proposition}\label{newlemma} I. Let  $p >1$ and $R>0$ be fixed. Then for each $\alpha>0$ there exists some $\alpha'>0$ such that the following holds true:
for every $\P,\Q\in \mathcal{P}(E^\infty)$ and every  $\xi\in \widehat  C^R_p(\P,\Q)$  there exists some $\zeta \in C(\P,\Q)$ such that for each
$A \in \EE^{\otimes\infty}  \otimes \EE^{\otimes\infty}$
satisfying  $\xi(A)\ge \alpha$
we have $\zeta(A)\ge \alpha'$.

II. For each $\alpha>0$ there exists some $\alpha'>0$ and $R\geq 1$ such that the following holds true:
for every $p\geq 1$, every $\P,\Q\in \mathcal{P}(E^\infty)$ and every  $\xi\in \widehat  C(\P,\Q)$  there exists some $\zeta \in \widehat C^R_p(\P,\Q)$ such that for each
$A \in \EE^{\otimes\infty}  \otimes \EE^{\otimes\infty}$
satisfying  $\xi(A)\ge \alpha$
we have $\zeta(A)\ge \alpha'$.
\end{proposition}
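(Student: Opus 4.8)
The plan is to treat both parts through a single truncation-of-densities mechanism, the two directions differing only in how the truncation level is fixed and how the sub-coupling on the ``good'' set is completed or renormalized.

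For Part I I would start from $\xi\in\widehat C^R_p(\P,\Q)$ with marginal densities $f_1=\dd\pi_1(\xi)/\dd\P$ and $f_2=\dd\pi_2(\xi)/\dd\Q$, so that $\|f_i\|_{L^p}\le R$. The first step is truncation: for a level $K$ set $G=\{(x,y):f_1(x)\le K,\ f_2(y)\le K\}$. Combining Markov's inequality applied to $f_i^p$ with H\"older's inequality yields $\pi_i(\xi)(f_i>K)=\int_{\{f_i>K\}}f_i\le R^pK^{-(p-1)}$, a bound depending only on $R,p,K$. This is exactly where $p>1$ enters, and it lets me fix $K$ so that $\xi(G^c)\le\alpha/2$ \emph{uniformly} in $\xi,\P,\Q$. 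Consequently, whenever $\xi(A)\ge\alpha$ the restriction satisfies $\xi|_G(A)=\xi(A\cap G)\ge\alpha/2$, while its marginals obey $\pi_1(\xi|_G)\le K\P$ and $\pi_2(\xi|_G)\le K\Q$. The second step is completion: after scaling, $\tfrac1K\xi|_G$ has both marginals dominated by $\P$ and $\Q$, so I adjoin a coupling of the two marginal deficits $\P-\tfrac1K\pi_1(\xi|_G)$ and $\Q-\tfrac1K\pi_2(\xi|_G)$ (which have equal mass), for instance their normalized product, to obtain a genuine $\zeta\in C(\P,\Q)$ with $\zeta\ge\tfrac1K\xi|_G$. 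Then $\zeta(A)\ge\tfrac1K\xi|_G(A)\ge\alpha/(2K)=:\alpha'$, and $\alpha'$ depends only on $\alpha,p,R$.

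For Part II I would use the same truncation but normalize instead of complete. Restricting $\xi\in\widehat C(\P,\Q)$ to $G=\{f_1\le K,\ f_2\le K\}$ and passing to $\zeta=\xi|_G/\xi(G)$ gives a probability measure whose marginal densities are bounded by $K/\xi(G)=:R$. Since the reference measures $\P,\Q$ are probability measures, a density bounded by $R$ has $L^p$-norm at most $R$ for \emph{every} $p\ge1$, so this single $\zeta$ lies in $\widehat C^R_p(\P,\Q)$ simultaneously for all $p$. Choosing $K$ so that $\xi(G^c)\le\alpha/2$ then yields $\zeta(A)=\xi(A\cap G)/\xi(G)\ge\xi(A)-\xi(G^c)\ge\alpha/2=:\alpha'$ for every $A$ with $\xi(A)\ge\alpha$.

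The main obstacle is the uniformity of the constants. In Part I it is handled cleanly, because membership in the $L^p$-ball of radius $R$ supplies the quantitative tail estimate $\pi_i(\xi)(f_i>K)\le R^pK^{-(p-1)}$, so the truncation level $K$—and hence $\alpha'$—is fixed independently of $\xi,\P,\Q$. The step I would scrutinize most carefully is the analogous control in Part II: there only $f_i\in L^1$ is available, giving $\pi_i(\xi)(f_i>K)\to0$ as $K\to\infty$ but \emph{a priori} without a rate uniform over $\widehat C(\P,\Q)$, so one must argue that the truncation level governing $R$ can be selected so that the normalized restriction controls the $A$-mass and the marginal densities at the same time. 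This is the heart of the construction and the point where the quantifier order for $R$ has to be justified with care.
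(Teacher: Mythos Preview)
Your Part~I is essentially the paper's proof with $K=\gamma^{-1}$: the paper restricts $\xi$ to the set $C_\gamma=\{f_1\le\gamma^{-1}\}\times\{f_2\le\gamma^{-1}\}$, scales the restriction by $\gamma$ so that its marginals are dominated by $\P$ and $\Q$, and completes with the product of the marginal deficits, obtaining $\zeta(A)\ge\gamma\alpha/2$. The tail bound $\pi_i(\xi)(f_i>\gamma^{-1})\le\gamma^{p-1}R^p$ is the same as yours.

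For Part~II your normalization $\zeta=\xi|_G/\xi(G)$ is a slightly simpler variant of what the paper does (the paper builds a more elaborate completion $\widetilde\zeta_\gamma$ whose marginal densities end up bounded by $\gamma^{-1}$); both constructions are correct and yield the same conclusion.

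The concern you flag about uniformity is entirely justified, and it applies to the paper's own proof as well. In both arguments the truncation level is chosen so that $\xi(G^c)\le\alpha/2$; with only $L^1$ control on the marginal densities this choice \emph{depends on $\xi$}, and hence so does $R$. In fact the proposition as literally stated, with $R$ uniform over all $\xi\in\widehat C(\P,\Q)$, cannot hold: take $\P=\Q$ equal to Lebesgue measure on $[0,1]$ (extended trivially to $E^\infty$), let $\xi_n$ be the law of $(U,U)$ with $U$ uniform on $[0,1/n]$, and set $A_n=[0,1/n]^2$; then $\xi_n(A_n)=1$, while every $\zeta\in\widehat C^R_2(\P,\Q)$ satisfies $\zeta(A_n)\le\pi_1(\zeta)([0,1/n])\le R\,n^{-1/2}\to0$. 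So do not try to close this ``gap'': what your construction and the paper's actually prove is the version with $R=R(\xi)$, and that is precisely what is needed for the only application in the paper (deducing condition~(ii) from condition~(iii) in Theorem~\ref{conv1}), where a single $\xi\in\widehat C(\P_x,\P_y)$ is fixed beforehand.
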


The proof of this proposition is given in Appendix A.

\begin{proposition}\label{prop} Let $S_1, S_2$ be Polish spaces and $Q:S_1\to \mathcal{P}(S_2)$ be a continuous mapping. Let $h:S_2\times S_2\to [0,1]$ be a distance-like
function.

Then there exists a measurable mapping $\eta:S_1\times S_1\ni (x,y)\mapsto \eta_{x,y}\in \mathcal{P}(S_2\times S_2)$ such that
$$
\eta_{x,y}\in C(Q(x), Q(y)), \quad
\int_{S_2\times S_2}h(x', y')\eta_{x,y}(\dd x', \dd y')=h(Q(x), Q(y)), \quad x,y\in S_1.
$$
\end{proposition}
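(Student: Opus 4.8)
The plan is to reduce the statement to a measurable selection of optimal transport plans and then to transport this selection back to $S_1\times S_1$ through the continuous map $Q$. Concretely, I would first construct a measurable map $\sigma\colon \mathcal{P}(S_2)\times \mathcal{P}(S_2)\to \mathcal{P}(S_2\times S_2)$ with $\sigma(\mu,\nu)\in C(\mu,\nu)$ and $\int_{S_2\times S_2}h\,\dd\sigma(\mu,\nu)=h(\mu,\nu)$ for every $\mu,\nu$, and then set $\eta_{x,y}:=\sigma(Q(x),Q(y))$. Since $Q$ is continuous, the map $(x,y)\mapsto(Q(x),Q(y))$ is continuous, hence Borel, so $\eta$ is measurable as a composition of $\sigma$ with a continuous map; moreover $\eta_{x,y}$ automatically has the prescribed marginals $Q(x),Q(y)$ and is optimal because $\sigma$ is. Thus everything reduces to the selection $\sigma$, i.e.\ to the measurable dependence of an optimal coupling on its marginals when the cost $h$ is merely lower semicontinuous. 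Note that none of the distance-like properties of $h$ are needed here: only that $h$ is lower semicontinuous and bounded.

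For fixed $(\mu,\nu)$ existence of an optimizer is routine: $C(\mu,\nu)$ is tight (its elements have the prescribed tight marginals), hence weakly compact by Prokhorov, and the functional $g(\zeta):=\int h\,\dd\zeta$ is weakly lower semicontinuous because $h$ is lower semicontinuous and bounded below (one may write $h=\sup_m h_m$ with $0\le h_m\le 1$ continuous and increasing, so that $g=\sup_m\int h_m\,\dd\zeta$ is a supremum of weakly continuous functionals). Consequently the infimum defining $h(\mu,\nu)=\inf_{\zeta\in C(\mu,\nu)}g(\zeta)$ is attained and the set of optimizers $\mathcal{O}(\mu,\nu):=\{\zeta\in C(\mu,\nu):g(\zeta)=h(\mu,\nu)\}$ is nonempty and compact. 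I would also record two measurability facts. First, the graph $\{(\mu,\nu,\zeta):\zeta\in C(\mu,\nu)\}$ is closed, since marginals depend weakly continuously on $\zeta$ and convergence of marginals is preserved in weak limits. Second, $V(\mu,\nu):=h(\mu,\nu)$ is lower semicontinuous: along a convergent sequence of marginals one extracts, by tightness, a weak limit of the corresponding optimizers, which lies in the limiting $C(\mu,\nu)$, and then lower semicontinuity of $g$ gives $V(\mu,\nu)\le\liminf V(\mu_k,\nu_k)$. In particular $g$ and $V$ are Borel.

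It then remains to select $\zeta\in\mathcal{O}(\mu,\nu)$ measurably in $(\mu,\nu)$, and this is the main obstacle. Because $g$ and $V$ are Borel, the graph $\mathrm{Gr}(\mathcal{O})=\{(\mu,\nu,\zeta):\zeta\in C(\mu,\nu),\ g(\zeta)\le V(\mu,\nu)\}$ is a Borel subset of $\mathcal{P}(S_2)^2\times\mathcal{P}(S_2\times S_2)$ with nonempty sections. The subtlety is that, since $h$ is only lower semicontinuous, I cannot invoke the classical measurable maximum theorem (which requires a fibrewise continuous, i.e.\ Carath\'eodory, cost) to obtain a Borel selector directly. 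I would instead exploit that every section $\mathcal{O}(\mu,\nu)$ is compact: a Borel set with compact (hence $\sigma$-compact) sections admits a Borel uniformization by the Arsenin--Kunugui theorem, which furnishes the required Borel selector $\sigma$. If one is content with universal measurability, the Jankov--von Neumann uniformization theorem applies immediately and is simpler to quote; universal measurability already suffices for integrating $\eta_{x,y}$ against a probability measure, which is all the applications need. Composing the selector with $(x,y)\mapsto(Q(x),Q(y))$ as in the first step then completes the construction.
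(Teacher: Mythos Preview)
Your argument is correct and essentially parallel to the paper's, but the measurable selection step is organized differently. The paper works directly on $S_1\times S_1$: it defines the compact-valued multifunction $\Phi(x,y)=C_{\mathrm{opt}}(Q(x),Q(y))$, factors it as $\Phi=\Upsilon\circ\Psi$ with $\Psi(x,y)=C(Q(x),Q(y))$ and $\Upsilon(K)=\arg\min_{K}I_h$, checks that both $\Psi$ and $\Upsilon$ are Borel measurable as maps into the Hausdorff hyperspace $\mathrm{comp}(\mathcal{P}(S_2\times S_2))$ (via two concrete lemmas from Stroock--Varadhan, Chapter 12.1), and then invokes the Kuratowski--Ryll-Nardzewski selection theorem for compact-valued measurable multifunctions. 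You instead factor through $\mathcal{P}(S_2)^2$, show that the graph of the optimizer correspondence $(\mu,\nu)\mapsto\mathcal{O}(\mu,\nu)$ is Borel with compact sections, and apply Arsenin--Kunugui.

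Both routes are sound. The paper's choice is a touch more elementary, since KRN is lighter machinery than Arsenin--Kunugui, but it requires verifying measurability into the Hausdorff hyperspace, which is where the decomposition $\Upsilon\circ\Psi$ and the Stroock--Varadhan lemmas enter. Your route sidesteps the hyperspace entirely: once the graph is Borel and the sections compact, Arsenin--Kunugui does the rest. Your factoring through $\mathcal{P}(S_2)^2$ also yields a selector $\sigma$ that is independent of $Q$, a small bonus. Your remark that Jankov--von Neumann already gives a universally measurable selector, and that this suffices for integration against probability measures, is correct and worth keeping as a fallback; the paper in fact uses Jankov's lemma elsewhere (Appendix B) for a related but distinct selection problem.
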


The proof of this proposition is given in Appendix C.

\begin{corollary}\label{measur_coup} For a given $d$-Feller chain $X$ and  given $n\in \mathbb{N}, \eps>0$, denote
$$
\gamma_{x,y}^{n,\eps}:=\sup_{\xi \in C(\P_x,\P_y)}\xi(d(X_n,Y_n)\le \varepsilon), \quad (x,y)\in E\times E.
$$
The following statements hold.

 I. The function $\gamma_{\cdot}^{n,\eps}:E\times E\to [0,1]$ is $\EE \otimes \EE-\mathcal{B}([0,1])$ measurable.

II. There exists a measurable function
$$
\xi^{n,\eps}:E\times E\ni(x,y)\mapsto \xi^{n,\eps}_{x,y}\in \mathcal{P}(E^\infty\times E^\infty)
$$
 such that for every $(x,y)\in E\times E$ the following properties hold:
\begin{itemize}
  \item[(i)] $\xi^{n,\eps}_{x,y}\in  C(\P_x,\P_y)$;
  \item[(ii)]
  $$
  \xi^{n,\eps}_{x,y}(d(X_n,Y_n)\le \varepsilon)=\gamma_{x,y}^{n,\eps}.
  $$
 \end{itemize}
\end{corollary}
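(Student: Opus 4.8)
The plan is to obtain both assertions from a single measurable selection of optimal couplings living on the space of trajectory laws, produced by Proposition \ref{prop}, and then to precompose with the map $(x,y)\mapsto(\P_x,\P_y)$. I first observe that I is immediate once II is established: if $(x,y)\mapsto\xi^{n,\eps}_{x,y}\in\mathcal P(E^\infty\times E^\infty)$ is measurable with $\xi^{n,\eps}_{x,y}\in C(\P_x,\P_y)$ and $\xi^{n,\eps}_{x,y}(d(X_n,Y_n)\le\eps)=\gamma^{n,\eps}_{x,y}$, then $\gamma^{n,\eps}_\cdot$ is the composition of this measurable map with the measurable evaluation $\xi\mapsto\xi(\{d(X_n,Y_n)\le\eps\})$ at the fixed closed set $\{d(X_n,Y_n)\le\eps\}$ (closed because $(x',y')\mapsto d(x'_n,y'_n)$ is $\rho^{(\infty,\infty)}$-continuous). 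So I concentrate on II. I intend to apply Proposition \ref{prop} with $S_1=\mathcal P(E^\infty)$, $S_2=E^\infty$ and $Q=\mathrm{id}_{\mathcal P(E^\infty)}$ (trivially continuous), and then to pull back the resulting selection along the $\EE\otimes\EE$-measurable map $(x,y)\mapsto(\P_x,\P_y)$. The obstruction is that the cost governing $\gamma^{n,\eps}$, namely $c(x',y'):=\mathbf 1_{\{d(x'_n,y'_n)>\eps\}}$, although symmetric, $[0,1]$-valued and lower semicontinuous (the set $\{d(x'_n,y'_n)>\eps\}$ is open), is \emph{not} distance-like, so Proposition \ref{prop} does not apply to $c$ directly.

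To circumvent this I would approximate $c$ from above by genuinely distance-like functions. Fixing a complete metric $\varrho\le 1$ that metrizes $E^\infty$ (e.g.\ $\rho^{(\infty)}$ after normalization), set
$$
h^{(m)}(x',y'):=\max\Big(\mathbf 1_{\{d(x'_n,y'_n)>\eps\}},\ \tfrac1m\,\varrho(x',y')\Big),\qquad m\ge 1 .
$$
Each $h^{(m)}$ is symmetric, $[0,1]$-valued, lower semicontinuous, and vanishes exactly on the diagonal (if $h^{(m)}(x',y')=0$ then $\varrho(x',y')=0$, i.e.\ $x'=y'$), hence distance-like, and $h^{(m)}\downarrow c$ pointwise. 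Proposition \ref{prop} then yields, for each $m$, a measurable map $(\mu,\nu)\mapsto\eta^{(m)}_{\mu,\nu}\in C(\mu,\nu)$ with $\int h^{(m)}\,\dd\eta^{(m)}_{\mu,\nu}=h^{(m)}(\mu,\nu)$, the minimal $h^{(m)}$-coupling distance. Writing $c(\mu,\nu):=\inf_{\zeta\in C(\mu,\nu)}\zeta(d(X_n,Y_n)>\eps)$, the elementary bound $c\le h^{(m)}\le c+\tfrac1m\varrho\le c+\tfrac1m$ gives $c(\mu,\nu)\le h^{(m)}(\mu,\nu)\le c(\mu,\nu)+\tfrac1m$, so $h^{(m)}(\mu,\nu)\downarrow c(\mu,\nu)$. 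Precomposing with $(x,y)\mapsto(\P_x,\P_y)$ and using $\gamma^{n,\eps}_{x,y}=1-c(\P_x,\P_y)$ already exhibits $\gamma^{n,\eps}_\cdot$ as a pointwise limit of measurable functions, which reconfirms I.

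For II I would pass to the limit in $m$. Put $\eta^{(m)}_{x,y}:=\eta^{(m)}_{\P_x,\P_y}\in C(\P_x,\P_y)$; these are measurable in $(x,y)$. As the marginals are fixed, $\{\eta^{(m)}_{x,y}\}_m$ is tight, hence relatively weakly compact, and any weak cluster point $\xi$ again lies in $C(\P_x,\P_y)$. For such $\xi$ the Portmanteau theorem applied to the open set $\{d(X_n,Y_n)>\eps\}$ gives $\xi(d(X_n,Y_n)>\eps)\le\liminf_m\eta^{(m)}_{x,y}(d(X_n,Y_n)>\eps)$, while $\eta^{(m)}_{x,y}(d(X_n,Y_n)>\eps)\le\int h^{(m)}\,\dd\eta^{(m)}_{x,y}=h^{(m)}(\P_x,\P_y)\le c(\P_x,\P_y)+\tfrac1m$; hence $\xi(d(X_n,Y_n)>\eps)\le c(\P_x,\P_y)$, and since the reverse inequality is the definition of $c(\P_x,\P_y)$, the coupling $\xi$ is optimal, i.e.\ $\xi(d(X_n,Y_n)\le\eps)=\gamma^{n,\eps}_{x,y}$. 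The step I expect to be the genuine obstacle is to select such a cluster point \emph{measurably} in $(x,y)$. I would handle it by considering the cluster-point multifunction $(x,y)\mapsto\Psi(x,y):=\bigcap_{N}\overline{\{\eta^{(m)}_{x,y}:m\ge N\}}$, which has nonempty compact values in the Polish space $\mathcal P(E^\infty\times E^\infty)$ and is measurable as a countable intersection of the measurable closed-valued multifunctions $(x,y)\mapsto\overline{\{\eta^{(m)}_{x,y}:m\ge N\}}$; the Kuratowski--Ryll-Nardzewski selection theorem then furnishes a measurable $\xi^{n,\eps}_{x,y}\in\Psi(x,y)$. By the computation above every element of $\Psi(x,y)$ is an optimal coupling, so $\xi^{n,\eps}$ satisfies (i) and (ii), completing II and with it I.
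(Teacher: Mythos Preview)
Your proof is correct but follows a genuinely different route from the paper's. The paper applies Proposition~\ref{prop} directly with $S_1=E$, $S_2=(\overline{E}^d)^\infty$, $Q(x)=\P_x$ (continuous because the chain is $d$-Feller), and $h(x',y')=\mathbf 1_{\{d(x'_n,y'_n)>\eps\}}$, then observes that the resulting kernel actually lands in $\mathcal P(E^\infty\times E^\infty)$ since each $\P_x$ gives full mass to $E^\infty$. As you correctly notice, this $h$ is \emph{not} distance-like in the stated sense (it does not vanish only on the diagonal); the paper is tacitly relying on the fact that the proof of Proposition~\ref{prop} in Appendix~C uses only boundedness, symmetry and lower semicontinuity of $h$, never the condition $h(x,y)=0\Leftrightarrow x=y$. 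Your workaround---approximating $c$ from above by the genuinely distance-like costs $h^{(m)}=\max(c,\tfrac1m\varrho)$, applying Proposition~\ref{prop} for each $m$, and then selecting a cluster point measurably via Kuratowski--Ryll-Nardzewski applied to the decreasing compact-valued multifunction $\Psi=\bigcap_N\overline{\{\eta^{(m)}:m\ge N\}}$---honours the hypothesis of Proposition~\ref{prop} as literally written. (The measurability of $\Psi$ is justified because the $K_N:=\overline{\{\eta^{(m)}:m\ge N\}}$ are compact-valued and decreasing, so $K_N\to\Psi$ in the Hausdorff metric and $\Psi$ is a pointwise limit of measurable $\mathrm{comp}$-valued maps; your phrase ``countable intersection of measurable closed-valued multifunctions'' alone would not quite suffice without this compactness.) The trade-off: the paper's argument is a one-liner once the spurious diagonal condition on $h$ is dropped, whereas yours needs the extra limiting machinery; on the other hand, by taking $S_1=\mathcal P(E^\infty)$ with $Q=\mathrm{id}$ and pulling back along $(x,y)\mapsto(\P_x,\P_y)$, you use only \emph{measurability} of this map, while the paper needs the $d$-Feller property to secure \emph{continuity} of $x\mapsto\P_x$ into $\mathcal P((\overline{E}^d)^\infty)$.
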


\begin{proof}  Consider the Polish metric space $(\overline{E}^d, d)$. Consider also  the space $(\overline{E}^d)^\infty$ with the metric $d^{(\infty)}$ introduced in the
same way with the metric $\rho^{(\infty)}$, and the space $\mathcal{P}((\overline{E}^d)^\infty)$ with the Kantorovich-Rubinshtein distance $d^{(\infty)}$; see Section \ref{basic}.
By \eqref{approx} we have that the image of $E^\infty$ under the natural embedding is a measurable subset in $(\overline{E}^d)^\infty$, and $\mathcal{P}(E^\infty)$ can be
identified as the (measurable) set of those measures from $\mathcal{P}((\overline{E}^d)^\infty)$ which provide a full measure for $E^\infty$. The same remarks are valid for
the spaces  $(\overline{E}^d)^\infty\times (\overline{E}^d)^\infty$ and $\mathcal{P}((\overline{E}^d)^\infty\times (\overline{E}^d)^\infty)$ which are defined analogously.

Observe that, because the chain $X$ is $d$-Feller, the mapping
$$
E\ni x\mapsto \P_x\in \mathcal{P}((\overline{E}^d)^\infty)
$$
is continuous. Hence we can apply Proposition \ref{prop} with  $S_1=E, S_2=(\overline{E}^d)^\infty$, $Q(x)=\P_x, x\in E$, and
$$
h(x,y)=\1_{d(x_n, y_n)> \eps} =1-\1_{d(x_n, y_n)\leq \eps}, \quad  x=(x_k)_{k\geq 1}, \, y=(y_k)_{k\geq 1}\in (\overline{E}^d)^\infty.
$$
Then  there exists a measurable function $$
\xi^{n,\eps}:E\times E\ni(x,y)\mapsto \xi^{n,\eps}_{x,y}\in \mathcal{P}((\overline{E}^d)^\infty\times (\overline{E}^d)^\infty)
$$
which satisfies properties (i), (ii) in  statement II of the corollary. In addition,  each $\P_x, x\in E$ assigns full measure to $E^\infty$, hence
by the property (i) each measure $\xi^{n,\eps}_{x,y}, (x,y)\in E\times E$ assigns full measure to $E^\infty\times E^\infty$. Therefore $\xi^{n,\eps}$ can be considered
as a measurable mapping taking values in $\mathcal{P}(E^\infty\times E^\infty)$, which completes the proof of  statement II.

Statement I follows immediately, because the mapping
$$
(x,y)\mapsto \xi^{n,\eps}_{x,y}(d(X_n,Y_n)\le \varepsilon)
$$ is measurable.

\end{proof}

\begin{remark} Proposition \ref{prop} and Corollary \ref{measur_coup} give a natural extension  of the ``Coupling Lemma for transition probabilities''
(Lemma 1 in \cite{KS15}). This lemma provides a probability kernel which in a point-wise sense minimizes the particular distance-like function
$h(x,y)=1_{x\not=y}$, while Proposition \ref{prop} provides such a kernel for an arbitrary distance-like function. The proof of Lemma 1 in \cite{KS15}
exploits an explicit construction of a maximal coupling based on the splitting representation of a probability law, and it can not be extended to our current setting.
We use instead the general measurable selection theorem which dates back to Kuratovskii and Ryll-Nardzevski theorem combined with some measurability criteria for
set-valued maps explained in \cite{Stroock_Varad}, Chapter 12.1. We mention that our proof is similar to that of Lemma 4.13 in \cite{HMS11},
which also provides a probability kernel which is maximal w.r.t. $h$, but in our setting we avoid using an additional
assumption on $h$ to be continuous.
\end{remark}

\begin{remark}\label{r45} We mention for future reference that the  kernel $\xi^{n, \eps}$ can be modified such that it possesses the following additional property,
which is a direct analogue of  property (ii) of the maximal coupling kernel constructed in \cite[Lemma 1]{KS15}:
\begin{itemize}
  \item[(iii)] the measure $\xi^{n,\eps}_{x,y}$ conditioned by $\{d(X_n,Y_n)> \varepsilon\}$ is absolutely continuous w.r.t.
  $\P_x\otimes \P_y$ with the respective Radon-Nikodym density being bounded from above by $(1-\gamma_{x,y}^{n,\eps})^{-1}$.
\end{itemize}
Namely, denote by  $\eta^{n,\eps}_{x,y}$ and $\zeta^{n,\eps}_{x,y}$ the initial measure $\xi^{n,\eps}_{x,y}$ conditioned by  the event
$\{d(X_n,Y_n)\leq  \varepsilon\}$ and by its complement, respectively. Then it is easy to see that the modified function
$$
\gamma^{n,\eps}_{x,y}\eta^{n,\eps}_{x,y}+(1-\gamma^{n,\eps}_{x,y})\pi_1(\zeta^{n,\eps}_{x,y})\otimes \pi_1(\zeta^{n,\eps}_{x,y})
$$
satisfies (i) -- (iii); see the proof of Theorem \ref{conv1} below for a more detailed discussion of this construction in a slightly different setting.
\end{remark}

\begin{proposition}\label{singular}
If $\nu_1$ and $\nu_2$ are singular probability measures on a Polish space $(E,\rho)$, then
$$
\lim_{\varepsilon \downarrow 0} \sup_{\zeta \in C(\nu_1,\nu_2)}\zeta \big( d(X,Y)\le \varepsilon\big) =0.
$$
\end{proposition}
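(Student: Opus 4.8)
The plan is to argue by contradiction, using the weak compactness of the set of couplings $C(\nu_1,\nu_2)$ together with the fact that singular marginals cannot charge the diagonal. Since $d$ is $\rho$-continuous, each set $D_\eps:=\{(x,y)\in E\times E: d(x,y)\le \eps\}$ is $\rho$-closed, and these sets decrease to the diagonal $D_0=\{(x,y):x=y\}$ as $\eps\downarrow 0$ (because $d$ is a genuine metric, so $d(x,y)=0$ forces $x=y$). The quantity $c_\eps:=\sup_{\zeta\in C(\nu_1,\nu_2)}\zeta(D_\eps)$ is nonincreasing as $\eps\downarrow 0$, hence $c:=\lim_{\eps\downarrow 0}c_\eps$ exists; the goal is to show $c=0$.

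First I would record the two topological inputs. The set $C(\nu_1,\nu_2)$ is weakly compact: it is uniformly tight, since for compact $K_1,K_2\subset E$ with $\nu_i(K_i)\ge 1-\eta/2$ (available by Ulam's theorem) every $\zeta\in C(\nu_1,\nu_2)$ satisfies $\zeta(K_1\times K_2)\ge 1-\eta$, and it is weakly closed because for bounded $\rho$-continuous $f$ the functional $\zeta\mapsto \int f(x)\,\zeta(\dd x,\dd y)$ is weakly continuous, so the marginal constraints pass to weak limits. Moreover, since each $D_\eps$ is $\rho$-closed, the Portmanteau theorem makes $\zeta\mapsto\zeta(D_\eps)$ upper semicontinuous for weak convergence.

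Now suppose $c>0$. For a sequence $\eps_k\downarrow 0$ I choose $\zeta_k\in C(\nu_1,\nu_2)$ with $\zeta_k(D_{\eps_k})\ge c/2$. By weak compactness I extract $\zeta_{k_j}\Rightarrow\zeta_\ast$, and $\zeta_\ast\in C(\nu_1,\nu_2)$ since the marginal constraints are preserved. Fix any $\eps>0$. For $j$ large one has $\eps_{k_j}\le\eps$, hence $D_{\eps_{k_j}}\subseteq D_\eps$ and $\zeta_{k_j}(D_\eps)\ge\zeta_{k_j}(D_{\eps_{k_j}})\ge c/2$; upper semicontinuity then gives $\zeta_\ast(D_\eps)\ge\limsup_j\zeta_{k_j}(D_\eps)\ge c/2$. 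Letting $\eps\downarrow 0$ and using continuity of $\zeta_\ast$ along the decreasing sets $D_\eps$ yields $\zeta_\ast(D_0)=\zeta_\ast(X=Y)\ge c/2>0$.

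Finally I would invoke singularity to reach a contradiction: choose a Borel set $B$ with $\nu_1(B)=1$ and $\nu_2(B)=0$; then under any $\zeta\in C(\nu_1,\nu_2)$ one has $\zeta(X\in B,\,Y\in B^c)=1$, which forces $\zeta(X=Y)=0$, in particular $\zeta_\ast(X=Y)=0$. This contradicts $\zeta_\ast(X=Y)\ge c/2>0$, so $c=0$, as claimed. The main point to get right is the interplay in the third step: one must combine the nesting $D_{\eps_{k_j}}\subseteq D_\eps$ with the upper semicontinuity of $\zeta\mapsto\zeta(D_\eps)$ on closed sets, so that the near-optimal couplings accumulate mass on the diagonal in the weak limit; the remaining verifications (tightness, weak closedness, and the singularity step) are routine.
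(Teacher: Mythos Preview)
Your proof is correct but takes a genuinely different route from the paper's. The paper gives a direct, constructive argument: using inner regularity it finds disjoint compact sets $K_1^n,K_2^n$ with $\nu_i(K_i^n)\ge 1-1/n$, observes that $\delta_n:=d(K_1^n,K_2^n)>0$ by the $\rho$-continuity of $d$ on the compact product, and then notes that for any coupling $\zeta$, $\{d(X,Y)<\delta_n\}\subset (K_1^n\times K_2^n)^c$, whence $\zeta(d(X,Y)<\delta_n)\le 2/n$. This yields an explicit quantitative bound and avoids any compactness or limiting argument on the space of couplings. Your approach instead passes through weak compactness of $C(\nu_1,\nu_2)$, Portmanteau on the closed sets $D_\eps$, and continuity from above to force a limit coupling to charge the diagonal, then derives a contradiction from singularity. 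Both arguments hinge on the same two ingredients (the $\rho$-continuity of $d$ and inner regularity/tightness on a Polish space), but the paper's version is shorter and gives more: an explicit rate $\sup_\zeta \zeta(d(X,Y)<\delta_n)\le 2/n$ rather than a bare limit. Your argument, on the other hand, is a clean abstract template that would adapt immediately to variants where one only knows the diagonal has zero mass under every coupling.
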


The proof of this proposition is given in Appendix A.

\begin{corollary}\label{ergomu}
Under the conditions of  Theorem \ref{conv1}, the measure $\mu$ is ergodic.
\end{corollary}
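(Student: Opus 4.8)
The plan is to argue by contradiction, using the ergodic decomposition \eqref{decomp} together with Proposition \ref{singular}. Suppose $\mu$ is not ergodic. Then $\kappa$ in \eqref{decomp} is not a Dirac mass, so $(\kappa\otimes\kappa)(\{(\nu_1,\nu_2):\nu_1\neq\nu_2\})>0$. Since $\mu\otimes\mu=\int\int \nu_1\otimes\nu_2\,\kappa(\dd\nu_1)\kappa(\dd\nu_2)$ and $(\mu\otimes\mu)(M)=1$, we have $(\nu_1\otimes\nu_2)(M)=1$ for $\kappa\otimes\kappa$-almost all $(\nu_1,\nu_2)$; hence I can pick two distinct ergodic invariant measures $\mu_1\neq\mu_2$ from the decomposition with $(\mu_1\otimes\mu_2)(M)=1$. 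As recalled above, distinct ergodic measures are mutually singular, so $\mu_1\perp\mu_2$, and the whole argument aims to contradict this singularity.

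For $\nu_1,\nu_2\in\mathcal P(E)$ and $\eps>0$ set $g_\eps(\nu_1,\nu_2):=\sup_{\zeta\in C(\nu_1,\nu_2)}\zeta(d(X,Y)\le\eps)$. The key preliminary reduction is that condition (i) is really a statement about the one-time marginals: by gluing (disintegrate $\P_x$ and $\P_y$ along the time-$n$ coordinate and couple the conditional laws independently) one checks $\sup_{\xi\in C(\P_x,\P_y)}\xi(d(X_n,Y_n)\le\eps)=g_\eps(P_n(x,\cdot),P_n(y,\cdot))$. Thus for $(x,y)\in M$ condition (i) reads $\lim_{\eps\downarrow0}\liminf_n g_\eps(P_n(x,\cdot),P_n(y,\cdot))=\alpha_{x,y}>0$, and since $\eps\mapsto g_\eps$ is nondecreasing this yields, for every fixed $\eps>0$, that $\liminf_n g_\eps(P_n(x,\cdot),P_n(y,\cdot))\ge\alpha_{x,y}$.

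The main step transfers this bound from the individual marginals $P_n$ to the Cesàro averages $\overline P_N(x,\cdot):=\frac1N\sum_{n=0}^{N-1}P_n(x,\cdot)$, which by Birkhoff's theorem (applied to bounded $d$-continuous test functions) converge $d$-weakly to $\mu_1$ for $\mu_1$-a.a. $x$, and to $\mu_2$ for $\mu_2$-a.a. $y$. Two elementary properties of $g_\eps$ drive it. First, $g_\eps$ is superadditive under averaging of its marginals: averaging near-optimal couplings of the pairs $(P_n(x,\cdot),P_n(y,\cdot))$ produces a coupling of $(\overline P_N(x,\cdot),\overline P_N(y,\cdot))$, so $g_\eps(\overline P_N(x,\cdot),\overline P_N(y,\cdot))\ge\frac1N\sum_{n=0}^{N-1}g_\eps(P_n(x,\cdot),P_n(y,\cdot))$, and taking $\liminf_N$ gives $\liminf_N g_\eps(\overline P_N(x,\cdot),\overline P_N(y,\cdot))\ge\alpha_{x,y}$. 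Second, writing $g_\eps=1-h_\eps$ with $h_\eps$ the coupling cost of the lower semicontinuous function $\1_{d(x,y)>\eps}$, the standard lower semicontinuity of optimal transport costs shows $h_\eps$ is $d$-weakly lower semicontinuous, i.e. $g_\eps$ is $d$-weakly upper semicontinuous. Fixing a single pair $(x,y)\in M$ that also lies in both almost-sure convergence sets (such pairs have full $\mu_1\otimes\mu_2$-measure), upper semicontinuity yields $g_\eps(\mu_1,\mu_2)\ge\limsup_N g_\eps(\overline P_N(x,\cdot),\overline P_N(y,\cdot))\ge\alpha_{x,y}>0$ for every $\eps>0$. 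Hence $\lim_{\eps\downarrow0}g_\eps(\mu_1,\mu_2)\ge\alpha_{x,y}>0$, contradicting Proposition \ref{singular}, which forces this limit to be $0$ since $\mu_1\perp\mu_2$. Therefore $\mu$ is ergodic.

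I expect the main obstacle to be exactly the passage from the single-time quantity in (i) to a bound on the averaged marginals: a naive averaging of the time-$n$ path-space couplings fails, since in the resulting double sum only the diagonal terms survive and the Cesàro average tends to $0$. One must first pass to the one-time marginals and then exploit the superadditivity of $g_\eps$ together with the $d$-weak upper semicontinuity of the coupling functional. The semicontinuity is the one nontrivial external ingredient, and it is essential that it be stated with respect to the $d$-topology, matching the $d$-weak convergence provided by the ergodic theorem.
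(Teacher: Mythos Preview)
Your argument is correct and reaches the same contradiction with Proposition \ref{singular} as the paper, but by a genuinely different route. The paper exploits the \emph{invariance} of the extremal components directly: using the measurable coupling kernel $\xi^{n,\eps}_{x,y}$ of Corollary \ref{measur_coup} (whose construction relies on the $d$-Feller property and a measurable selection argument), one integrates over $\nu_1(\dd x)\nu_2(\dd y)$ and observes that the time-$n$ marginal of the resulting path-space measure lies automatically in $C(\nu_1,\nu_2)$; Fatou and monotone convergence then give the lower bound contradicting singularity. Your proof instead fixes a single pair $(x,y)$, reduces condition (i) to the one-time quantities $g_\eps(P_n(x,\cdot),P_n(y,\cdot))$, passes to Ces\`aro averages, uses Birkhoff's theorem (applied through a countable convergence-determining class) to obtain $\overline P_N(x,\cdot)\Tod\mu_1$ and $\overline P_N(y,\cdot)\Tod\mu_2$, and finishes with the convexity of $g_\eps$ in its marginals plus the $d$-weak upper semicontinuity of the optimal-coupling functional on the Polish space $(\overline{E}^d,d)$. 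The paper's approach is shorter and needs no semicontinuity of the transport cost, at the price of the measurable-selection machinery; your approach avoids that machinery entirely but must invoke the standard lower semicontinuity of the Wasserstein-type cost with l.s.c.\ integrand, together with Prokhorov compactness in $(\overline{E}^d,d)$ to extract limit couplings. Both are self-contained under the hypotheses of Theorem \ref{conv1}.
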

\begin{proof} Consider the ergodic decomposition \eqref{decomp} of $\mu$. Then
$$
1=(\mu\otimes\mu)(M)=\int_{E\times E}(\nu_1\otimes \nu_2)(M)\kappa(\dd \nu_1)\kappa(\dd \nu_2).
$$
If $\mu$ is not ergodic then $\kappa$ is non-degenerate and there exist two mutually singular invariant probability measures $\nu_1, \nu_2$  such that
\begin{equation}\label{ass}
(\nu_1\otimes \nu_2)(M)=1.
\end{equation}
Define for a given $n\geq 1, \eps>0$ the measure $\eta^{n,\eps}\in  \mathcal{P}(E^\infty\times E^\infty)$ by
$$
\eta^{n, \eps}=\int_{E\times E} \xi_{x,y}^{n, \eps}\,\nu_1(\dd x)\nu_2(\dd y),
$$
where $\xi_{x,y}^{n, \eps}$ is defined as in Corollary \ref{measur_coup},
and denote by $\zeta^{n, \eps}$ the law of $(X_n, Y_n)$ under $\eta^{n, \eps}$. Because  $\pi_1(\eta^{n, \eps})=\P_{\nu_1}, \pi_2(\eta^{n, \eps})=\P_{\nu_2}$, and
$\nu_1, \nu_2$ are invariant, we have
$$
\zeta^{n, \eps}\in C(\nu_1,\nu_2)
$$
for any $n\geq 1, \eps>0$.

On the other hand,
$$
\zeta^{n, \eps}( d(X,Y)\le \varepsilon)= \int_{E\times E}\gamma_{x,y}^{n, \eps}\,\nu_1(\dd x)\nu_2(\dd y)
$$
and therefore
$$
\sup_{\zeta \in C(\nu_1,\nu_2)}\zeta \big( d(X,Y)\le \varepsilon\big)\geq \int_{E\times E}\gamma_{x,y}^{n, \eps}\,\nu_1(\dd x)\nu_2(\dd y).
$$
Denote
$$
\gamma_{x,y}^\eps= \liminf_{n \to \infty} \gamma_{x,y}^{n,\eps},
\quad
\gamma_{x,y}= \lim_{\eps\to 0+} \gamma_{x,y}^{\eps},
$$
then by the Fatou lemma and the monotone convergence theorem
$$
\lim_{\varepsilon \downarrow 0} \sup_{\zeta \in C(\nu_1,\nu_2)}\zeta \big( d(X,Y)\le \varepsilon\big)\geq \int_{E\times E}\gamma_{x,y}\,\nu_1(\dd x)\nu_2(\dd y).
$$
By condition (i) of Theorem \ref{conv1}, we have $\gamma_{x,y}>0$ for any $(x,y)\in M$, hence the above inequality combined with \eqref{ass} contradicts Proposition
\ref{singular}.
\end{proof}

\begin{corollary}\label{ergomu2}
Under the conditions of  Theorem \ref{conv1}, the measure $\mu\otimes \mu$ is ergodic for the product chain.
\end{corollary}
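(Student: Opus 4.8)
The plan is to recognize the product chain as a fresh instance of the very same framework and then invoke Corollary \ref{ergomu}. Let $\hat X=(X^{(1)},X^{(2)})$ denote the Markov chain on $\hat E:=E\times E$ with transition kernel $\hat P\big((x,y),\cdot\big)=P(x,\cdot)\otimes P(y,\cdot)$, and equip $\hat E$ with the metric $\hat\rho\big((x,y),(x',y')\big)=\rho(x,x')\vee\rho(y,y')$ and the distance-like function $\hat d\big((x,y),(x',y')\big)=d(x,x')\vee d(y,y')$. Then $\mu\otimes\mu$ is invariant for $\hat X$, and all standing assumptions of Section \ref{basic} are inherited: $\hat E$ is Polish, $\hat d$ is bounded and $\hat\rho$-continuous, the approximation property \eqref{approx} for $\hat d$ is obtained coordinatewise from that for $d$, and $\hat X$ is $\hat d$-Feller because $X$ is $d$-Feller. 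Consequently it suffices to verify condition (i) of Theorem \ref{conv1} for $\hat X$ with respect to $\mu\otimes\mu$, after which Corollary \ref{ergomu} gives the assertion.

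To this end I first observe that under $(\mu\otimes\mu)\otimes(\mu\otimes\mu)$ the four coordinates of a point $\big((a,b),(a',b')\big)\in\hat E\times\hat E$ are i.i.d.\ with law $\mu$, so that both $(a,a')$ and $(b,b')$ are distributed as $\mu\otimes\mu$. Since $(\mu\otimes\mu)(M)=1$, the set $\hat M:=\{\big((a,b),(a',b')\big):(a,a')\in M,\ (b,b')\in M\}$ has full $(\mu\otimes\mu)\otimes(\mu\otimes\mu)$-measure, and I only need to check condition (i) for $\hat X$ at points of $\hat M$.

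The key step is a product-coupling construction. Fix $\big((a,b),(a',b')\big)\in\hat M$ and, for given $n,\eps$, let $\xi^{n,\eps}_{a,a'}$ and $\xi^{n,\eps}_{b,b'}$ be the optimal couplings furnished by Corollary \ref{measur_coup}, realizing $\gamma^{n,\eps}_{a,a'}$ and $\gamma^{n,\eps}_{b,b'}$. Under the canonical identification $\hat E^\infty\cong E^\infty\times E^\infty$ their product is a coupling in $C\big(\hat\P_{(a,b)},\hat\P_{(a',b')}\big)$ (recall $\hat\P_{(a,b)}=\P_a\otimes\P_b$) under which $\{d(X^{(1)}_n,Y^{(1)}_n)\le\eps\}$ and $\{d(X^{(2)}_n,Y^{(2)}_n)\le\eps\}$ are independent. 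Since $\hat d(\hat X_n,\hat Y_n)\le\eps$ is exactly the intersection of these two events, the product coupling assigns it probability $\gamma^{n,\eps}_{a,a'}\gamma^{n,\eps}_{b,b'}$, whence
$$
\hat\gamma^{n,\eps}_{(a,b),(a',b')}:=\sup_{\hat\xi\in C(\hat\P_{(a,b)},\hat\P_{(a',b')})}\hat\xi\big(\hat d(\hat X_n,\hat Y_n)\le\eps\big)\ \ge\ \gamma^{n,\eps}_{a,a'}\,\gamma^{n,\eps}_{b,b'}.
$$
Passing to the limits via the elementary bound $\liminf_n(s_nt_n)\ge(\liminf_n s_n)(\liminf_n t_n)$ for $s_n,t_n\in[0,1]$ gives $\liminf_n\hat\gamma^{n,\eps}_{(a,b),(a',b')}\ge\gamma^\eps_{a,a'}\gamma^\eps_{b,b'}$ in the notation of Corollary \ref{ergomu}; letting $\eps\downarrow0$ and using that each factor converges to $\gamma_{a,a'}$, resp.\ $\gamma_{b,b'}$, I obtain $\lim_{\eps\downarrow0}\liminf_n\hat\gamma^{n,\eps}_{(a,b),(a',b')}\ge\gamma_{a,a'}\gamma_{b,b'}>0$, the strict positivity being precisely condition (i) for $X$ at the points $(a,a'),(b,b')\in M$. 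This is condition (i) of Theorem \ref{conv1} for $\hat X$, valid on the full-measure set $\hat M$, so Corollary \ref{ergomu} applied to $\hat X$ yields that $\mu\otimes\mu$ is ergodic.

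The point requiring the most care is the legitimacy of treating $\hat X$ as a bona fide instance of the framework of Section \ref{basic} --- chiefly the inheritance of the $\hat d$-Feller property and of the structural assumptions on $\hat d$ (boundedness, $\hat\rho$-continuity, and the embedding/approximation property \eqref{approx}) --- together with the bookkeeping behind the identification $\hat E^\infty\cong E^\infty\times E^\infty$ that lets a product of two single-chain trajectory couplings be read as a coupling of the product-chain laws. Once this is granted, the analytic core (the product estimate and the $\liminf$ inequality) is entirely elementary.
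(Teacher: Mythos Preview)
Your proof is correct and follows essentially the same route as the paper: build the product coupling $\xi^{n,\eps}_{a,a'}\otimes\xi^{n,\eps}_{b,b'}$, read off the product lower bound $\gamma^{n,\eps}_{a,a'}\gamma^{n,\eps}_{b,b'}$ for the analogue of $\gamma^{n,\eps}$ on the product chain, pass to the limits to verify condition~(i) for the product chain on a full-measure set, and then invoke Corollary~\ref{ergomu}. The only difference is cosmetic --- you are more explicit about checking that the product chain inherits the standing hypotheses (Polish, $\hat d$-Feller, approximation property~\eqref{approx}), whereas the paper leaves this implicit; your use of the maximum $\vee$ for $\hat d$ is the right choice (the paper's displayed $\wedge$ appears to be a typo, since the subsequent equality with the product of the $\gamma$'s only holds for the intersection event).
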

\begin{proof} Denote by the same symbol $d$ the metric on the product space $E\times E$ $$
d\big((x,u), (y,v)\big)=d(x,y)\wedge d(u,v),
$$
and by $\P_{(x,u)}$  the distribution of the product chain with the initial value $(x,u)$. For any $x,y,u,v\in E$ and $n\geq 1, \eps>0$ consider the probability measure
on $E^\infty\times E^\infty\times E^\infty\times E^\infty$
$$
\xi^{n,\eps}_{x,y,u,v}=\xi^{n,\eps}_{x,y}\otimes \xi^{n,\eps}_{u,v},
$$
where $\xi^{n,\eps}_{x,y}$ is defined in Corollary \ref{measur_coup}. Then the projections $\pi_{1,3}$ and $\pi_{2,4}$ of this measure on the coordinates 1,2 and 2,4,
respectively equal $\P_{(x,u)}$ and $\P_{(y,v)}$. On the other hand,
$$
\lim_{\varepsilon \downarrow 0+}\liminf_{n \to \infty}  \xi^{n,\eps}_{x,y,u,v} \Big( d\big((X_n,U_n),(Y_n,V_n)\big) \le \varepsilon \Big)=
\lim_{\varepsilon \downarrow 0+}\liminf_{n \to \infty} \gamma^{n,\eps}_{x,y}\gamma^{n,\eps}_{u,v}\geq \gamma_{x,y}\gamma_{u,v}>0
$$
for any
$$
(x,y,u,v)\in M':=M\times M.
$$
 Thus the above inequality yields the following analogue of
 condition (i) of Theorem \ref{conv1} for the product chain: for any $(x,y,u,v)\in M'$,
\begin{equation}\label{prod}\lim_{\varepsilon \downarrow 0+}\liminf_{n \to \infty} \sup_{\pi_{1,3}(\xi)=\P_{(x,u)},\pi_{2,4}(\xi)=\P_{(y,v)}}
\xi\Big( d\big((X_n,U_n),(Y_n,V_n)\big) \le \varepsilon \Big) >0.
\end{equation}

Clearly,  $(\mu\otimes \mu\otimes \mu\otimes \mu)(M')=1$, hence  the required statement follows by the previous corollary.
\end{proof}

Now we proceed with the proof of Theorem \ref{conv1}. The second statement of the theorem follows easily: since
$$
C(\P,\Q) \subset \widehat  C_p^R(\P,\Q)
$$
 for each $p>1$ and $R \ge 1$, condition (i) immediately implies (ii). The inverse implication follows from the first statement in Proposition \ref{newlemma}
 while the second statement in this proposition shows that (iii) implies (ii).

\begin{proof}[Proof of Theorem \ref{conv1}, statement I]
Our aim is to prove that for every $\eps>0$
\begin{equation}\label{weak_in_prob2}
\Gamma^{n,\eps}:=\int_{E\times E}\,\gamma_{x,y}^{n, \eps}\mu(\dd x)\mu(\dd y)\to 1, \quad n\to \infty.
\end{equation}
This yields the required statement. Indeed, for given $\eps>0$ and $n \ge 1$, consider a random element $\eta$ with law $\mu$ and a sequence $Z_k=(X_k,Y_k), k\geq 0$
with $Z_0=(x,\eta)$ and the conditional law of $Z$ under $\sigma(Z_0)$ equal  $\xi^{n,\eps}_{x,\eta}$;  the measurable mapping $\xi^{n,\eps}$ is introduced in
Corollary \ref{measur_coup}. By  property (i) of this mapping, the law of $Z$ belongs to
$C(\P_x, \P_\mu)$. We have assumed $d\leq 1$,  hence
$$
d\Big(P_n(x,\cdot),\mu\Big)\leq \eps+\xi^{n,\eps}_{x,\eta}   \Big(d(X_n, Y_n)> \eps\Big).
$$
By property (ii) of the mapping $\xi^{n,\eps}$, we have
$$
\xi^{n,\eps}_{x,\eta}\Big(d(X_n, Y_n)\leq \eps\Big)=\int_{E}\,\gamma_{x,y}^{n, \eps}\mu(\dd y).
$$
Consequently, it follows from \eqref{weak_in_prob2} that
$$
\liminf_{n\to \infty}\int_{E}d\Big(P_n(x,\cdot),\mu\Big)\, \mu(\dd x)\leq \liminf_{n\to \infty}\left(\eps+(1-\Gamma^{n,\eps})\right)=\eps
$$
for any $\eps>0$, which then yields  \eqref{weak_in_prob}.\\

Now we proceed with the proof of  \eqref{weak_in_prob2}. Take an independent coupling $Z_k=(X_k,Y_k),\,k\in \Z_+$ with $\mathrm{Law}\,(X_0)=\mathrm{Law}\,(Y_0)=\mu$ on a probability
space $(\Omega, {\mathcal{F}}, \P)$ and observe that for every fixed $\eps, n, k$
\begin{equation}\label{submart}
\gamma_{Z_k}^{n+1,\eps}\geq \E[ \gamma_{Z_{k+1}}^{n,\eps}|\mathcal{F}^Z_k].
\end{equation}
Indeed,  the expression on the left hand side means that one fixes the position of $Z$ at the time instant $k$ and optimizes the probability for the coordinates of $Z$
to stay $\eps$-close at the time instant $n+k+1$, while the expression at the right hand side means that
one makes an independent step first, and then optimizes the same the probability; the optimal probability in the second case is smaller because due to the more restricted
set of possible couplings.

Using Fatou's lemma and inequality \eqref{submart} we obtain:
$$
 \E[\gamma_{Z_{k+1}}^{\eps}|\mathcal{F}^Z_k]=
 \E[\liminf_n\ \gamma_{Z_{k+1}}^{n,\eps}|\mathcal{F}^Z_k]\leq  \liminf_n \E[\gamma_{Z_{k+1}}^{n,\eps}|\mathcal{F}^Z_k]\leq \liminf_n \gamma_{Z_k}^{n+1,\eps}=
 \gamma_{Z_k}^{\eps},
$$
where $\gamma^\eps, \gamma$ are defined as in the proof of Corollary \ref{ergomu}. Hence $\gamma_{Z_n}^{\eps}, n\geq 1$ is a non-negative super-martingale for every
$\eps>0$, and so is $\gamma_{Z_n}, n\geq 1$. Therefore, the $\P^Z$-a.s. limits
$$
  \gamma_{Z_n}\to \gamma, \quad \gamma_{Z_n}^{\eps}\to \gamma^{\eps}, \quad n\to \infty
  $$
exist. On the other hand, since $Z$ is stationary, the sequences    $\gamma_{Z_n}^{\eps}, n\geq 1$ and  $\gamma_{Z_n}, n\geq 1$ are stationary as well, and thus
each $\gamma_{Z_n}$ (resp. $\gamma_{Z_n}^{\eps}$) has the same law as $\gamma$  (resp. $\gamma^{\eps}$). By Corollary \ref{ergomu2},
the process $Z$ is ergodic and therefore Birkhoff's ergodic theorem implies
$$
\gamma=\lim_{n\to \infty}{1\over n}\sum_{k=1}^n\gamma_{Z_k}, \quad \gamma^{\eps}=\lim_{n\to \infty}{1\over n}\sum_{k=1}^n\gamma_{Z_k}^{\eps}
$$
are almost surely constant. We can therefore assume that $\gamma$ and $\gamma^{\eps}$ are deterministic. It follows that
$$
\gamma^\eps_{x,y}=\gamma^\eps, \quad \gamma_{x,y}=\gamma
$$
for $\mu\otimes\mu$-a.a. $(x,y)\in E\times E$. Observe that $\gamma^\eps\geq \gamma$, and by assumption (i) of the theorem we have $\gamma>0$.

The same reasoning as the one we have used to prove \eqref{submart} shows that
$$
\Gamma^{n+1,\eps}\geq \Gamma^{n, \eps},
$$
and clearly $\eps \mapsto \Gamma^{n, \eps}$ is non-decreasing. Hence there exist the limits
$$
\Gamma^\eps=\lim_{n\to \infty}\Gamma^{n,\eps}, \quad \Gamma=\lim_{\eps\to 0}\Gamma^\eps.
$$
To show \eqref{weak_in_prob2}, we just need to show that $\Gamma=1$. Observe that
$\Gamma^{n,\eps}$ equals the maximal probability of the  event $\{d(X_n, Y_n)\leq \eps\}$ over all couplings $Z=(X,Y)$ such that $\mathrm{Law}(Z_0)=\mu\otimes\mu$ and
the conditional distributions of $X,Y$ conditioned by $\sigma(Z_0)$ equal $\P_{X_0}, \P_{Y_0}$, respectively.

Assuming $\Gamma<1$, we will construct for any fixed $\eps>0$ and any $n$ large enough  a  coupling $Z=(X,Y)$ having the same properties as above  such that
\begin{equation}\label{terminal}
\P(d(X_n, Y_n)\leq \eps)\geq \Gamma+{(1-\Gamma)\gamma\over 2}.
\end{equation}
This  yields the contradictory inequality
$$
\Gamma=\lim_{\eps\to 0}\lim_{n\to \infty}\Gamma^{n, \eps}\geq \Gamma+{(1-\Gamma)\gamma\over 2}>\Gamma,
$$
and hence $\Gamma<1$ is impossible. Note however that our proof does not show that $\gamma=1$.

Fix  $\gamma'\in (\gamma/2, \gamma)$ and $\Gamma'\in (0, \Gamma)$ close enough to $\Gamma$, so that $$
\Gamma'+(1-\Gamma')\gamma'> \Gamma+{(1-\Gamma)\gamma\over 2}.
$$
Then choose $\delta>0$ small enough, so that
\begin{equation}\label{delta}
(1-\delta)(\Gamma'-2\delta)+\gamma'(1-\Gamma')-\delta> \Gamma+{(1-\Gamma)\gamma\over 2}.
\end{equation}

Let us proceed with a preliminary analysis which will give us several auxiliary objects we will use in the construction below. First,
 since $\gamma_{x,y}^\eps=\gamma^\eps\geq \gamma$ for $\mu\otimes\mu$-a.a. $(x,y)\in E\times E$, by the definition of $\gamma_{x,y}^\eps$  we have that the time moment
$$
T_{x,y}^{\eps}=\min\{T: \gamma_{x,y}^{n,\eps}>\gamma', \quad n\geq T \}
$$
is finite for $\mu\otimes\mu$-a.a. $(x,y)\in E\times E$. Fix some $N$ such that
$$
(\mu\otimes\mu)\Big((x,y):T_{x,y}^\eps>N\Big)<\delta,
$$
and denote $O_N^\eps=\{(x,y):T_{x,y}^\eps\leq N\}$.

Next, recall that  $X$ is assumed to be $d$-Feller, and $d$ is continuous. Then  for  given $\eps>0$, ащк $N\geq 1$ chosen above,
and ащк any compact set $K\subset E$
$$
\gamma_{x,y}^{N, \eps}\to 1
$$
when $d(x,y)\to 0, (x,y)\in K\times K$. Fix a compact set $K\subset E$ such that $\mu(K)>1-\delta,$ and choose $\eps_1>0$ such that
$$
\gamma_{x,y}^{N, \eps}>1-\delta, \quad d(x,y)\leq \eps_1, \quad (x,y)\in K\times K.
$$

Finally,  we observe that by the definition of $\Gamma^{\eps_1}\geq \Gamma>\Gamma'$, there exists $N_0\in  \N$ such that $\Gamma^{\eps_1, N_0}\geq\Gamma'$.
Hence for arbitrary $n\geq N_0$ there exists a coupling $Z^{n,\eps_1}=(X^{n,\eps_1},Y^{n,\eps_1})$ such that
$\mathrm{Law}(Z_0^{n,\eps_1})=\mu\otimes\mu$, the conditional distributions of $X^{n,\eps_1},Y^{n,\eps_1}$ conditioned by $\sigma(Z_0^{n,\eps_1})$ equal
$\P_{X_0^{n,\eps_1}}, \P_{Y_0^{n,\eps_1}}$ respectively, and
\begin{equation}\label{ineq}
\P\Big(d(X_{n}^{n,\eps_1}, Y_{n}^{n,\eps_1})\leq \eps_1\Big)\geq \Gamma'.
\end{equation}
We modify this coupling by the  same construction we have mentioned in Remark \ref{r45}. Namely, denote the law of $Z$ by $\P^Z$ and consider the set
$$
C=\Big\{d(X_{n}^{n,\eps_1}, Y_{n}^{n,\eps_1})\leq \eps_1\Big\}.
$$
Then
$$
\P^Z=\Gamma' \P^Z(\cdot|C)+(1-\Gamma')\Q^{Z, \Gamma', C},
$$
where
$$
\Q^{Z, \Gamma', C}=(1-\Gamma')^{-1}(\P^Z-\Gamma' \P^Z(\cdot|C))
$$
is a probability measure on $E^\infty\times E^\infty$. Recall that the projections $\pi_1, \pi_2$ of $\P^Z$ equal $\P_\mu$, hence the projections of $\Q^{Z, \Gamma', C}$
are absolutely continuous w.r.t. $\P_\mu$ with their Radon-Nikodym derivatives $\leq  (1-\Gamma')^{-1}$. Taking instead of $\P^Z$ the measure
$$
\Gamma' \P^Z(\cdot|C)+(1-\Gamma')\pi_1(\Q^{Z, \Gamma', C})\otimes \pi_2(\Q^{Z, \Gamma', C}),
$$
we obtain a new coupling such that \eqref{ineq} for this coupling still holds true, but in addition the  distribution conditioned by the complement to the set $C$ is
absolutely continuous w.r.t. $\P_\mu\otimes\P_\mu$ with the Radon-Nikodym density bounded by $(1-\Gamma')^{-1}$. With a slight abuse of notation which however does not cause
misunderstanding, we denote this modified coupling  by the same symbol $Z^{n, \eps_1}$.

Now for an arbitrary $n\geq N_0+N$ we construct the required coupling $Z$ such that \eqref{terminal} holds true.
We define $Z$ as follows:
\begin{itemize}
 \item the law of $Z_k, k\leq n-N$ is the same as the law of $Z_k^{n-N,\eps_1}, k\leq n-N$ (recall that $n-N\geq N_0$ hence $Z^{n-N,\eps_1}$ is well defined);
 \item the conditional law of $Z_{l+n-N}, l\geq 0$ w.r.t. $\sigma(Z_k, k\leq n-N)$ equals $\xi^{N, \eps}_{X_{n-N},Y_{n-N}}$, where $\xi^{n,\eps}$ is the function
 constructed in Lemma \ref{measur_coup}.
\end{itemize}

To estimate  the probability of the event $A=\{d(X_{n},Y_{n})\leq \eps\}$,  denote
$$
B=\{d(X_{n-N},Y_{n-N})\leq \eps_1\}, \quad C=\{Z_{n-N}\in K\times K\}, \quad D=\{Z_{n-N}\in O_{N}^{\eps}\}.
$$
Observe that, when conditioned by $B\cap C$, the event $A$  has probability $\geq 1-\delta$ because the components $X,Y$ start $\eps_1$-close from the compact set $K$
and hence  by the choice of $N$ they stay $\eps$-close
after the time $N$ with probability $\geq 1-\delta$.

On the other hand, when conditioned by $\overline B\cap D$,  event $A$  has probability $\geq \gamma'$ by the definitions of the set $O^\eps_N$ and the event $D$,
and according to our construction of
the coupling $Z$. Therefore
$$
\P(A)\geq (1-\delta)\P(B\cap C)+\gamma'\P(\overline B\cap D).
$$
Recall that each of the components $X,Y$ has  law $\P_\mu$, hence
$$
\P(B\cap C)\geq \P(B)-\P(X_{n-N}\not\in K)-\P(Y_{n-N}\not\in K)\geq \P(B)-2\delta.
$$
Next, the  law of $Z_{n-N}$ conditioned by $\overline B$ is absolutely continuous w.r.t. $\mu\otimes \mu$ with Radon-Nikodym density $\leq (1-\Gamma')^{-1}$. Hence
$$
\P(\overline D|\overline B)\leq (1-\Gamma')^{-1}(\mu\otimes \mu)((E\times E)\setminus O_{N}^{\eps})\leq (1-\Gamma')^{-1}\delta
$$
and therefore
$$
\P(\overline B\cap D)=\P(D|\overline B)(1-\P(B))\geq \Big(1- (1-\Gamma')^{-1}\delta\Big)(1-\P(B)).
$$
Recall that $\P(B)=\Gamma'$, so we finally obtain
$$
\P(A)\geq (1-\delta)(\Gamma'-2\delta)+\gamma'(1-\Gamma')-\delta.
$$
By \eqref{delta}, this yields \eqref{terminal} and completes the proof.
\end{proof}

\begin{proof}[Proof of Theorem \ref{conv11}] Like in the  previous proof,
it is sufficient to show that for any $\eps>0$ the constant $\gamma^\eps$ constructed above  equals 1. Fix $x_0$ in the (topological) support of $\mu$ and observe
that by the e-chain property and the
triangle inequality for the metric $d$ on $\mathcal{P}(E)$, for any $\kappa>0$ there exists $r>0$ such that
\begin{equation}\label{21}
d(P_n(x,\cdot), P_n(y,\cdot))\leq \kappa, \quad n\geq 0, \quad x,y\in B(x_0, r),
\end{equation}
where $B_d(x_0, r)$ is the open ball in $E$ w.r.t. $d$ with center $x_0$ and radius $r$.  Note that for any $\eps>0$ and any coupling $\xi\in C(\P_x, \P_y)$,
\begin{equation}\label{22}
\E^\xi d(X_n, Y_n)\geq  \eps\xi(d(X_n, Y_n)> \eps)\geq \eps(1-\gamma_{x,y}^{n,\eps}),
\end{equation}
hence
$$
\gamma_{x,y}^{n,\eps}\geq 1-{1\over \eps}\E^\xi d(X_n, Y_n)
$$
Since
$$
d\Big(P_n(x,\cdot), P_n(y,\cdot)\Big)=\min_{\xi\in C(\P_x, \P_y)}\E^\xi d(X_n, Y_n),
$$
combining \eqref{21} and \eqref{22} we get
$$
\gamma_{x,y}^{n,\eps}\geq 1-{\kappa\over \eps}, \quad n\geq 0, \quad x,y\in B(x_0, r).
$$
Because $B(x_0, r)\times B(x_0, r)$ has positive measure $\mu\otimes \mu$ for any $r>0$, and
$$
\liminf_{n\to \infty}\gamma^{n, \eps}_{x,y}=\gamma^\eps
$$
for $\mu\otimes \mu$-a.a. $(x,y)$, the above inequality yields
$$
\gamma^\eps\geq 1-{\kappa\over \eps}
$$
for any $\kappa>0$; that is, $\gamma^\eps=1$.
\end{proof}

\begin{proof}[Proof of Corollary \ref{coromixing}] Let $g:E \to {\mathbb{R}}$ be Lipschitz continuous w.r.t. $d$.
Then
$$
\Big|\E_\mu[g(X_n)|X_j, j\leq 0]-\E_\mu g(X_0)\Big|\leq \mathrm{Lip}(g)  d( P_n(X_0,\cdot), \mu).
$$
Since $\mathrm{Law}\,(X_0)=\mu$,  \eqref{mixing} follows from \eqref{weak_in_prob} by the dominated convergence theorem (recall that we assume $d\leq 1$).

For an arbitrary bounded $g$,  the usual approximation arguments can be applied since the time shift is an isometry on $L_2(E^\infty, \P_\mu)$ and the class of
$d$-Lipschitz continuous functions is
dense in $L_2(E, \mu)$.
\end{proof}

\section{Examples}\label{s5}

In this section we give several examples which illustrate the conditions imposed in our main results and clarify the relations of these results
with some other available in the field.

\begin{example}\label{examplelimsup}
This example shows that assumption \eqref{limsup} in Theorem \ref{unique} cannot be replaced by the assumption
\begin{equation}\label{limsupmod}
\limsup_{n \to \infty} \xi_{x,y} \big( d(X_n,Y_n)\le \varepsilon\big) =1,
\end{equation}
even if the chain is Feller and
generalized couplings are replaced by couplings. Consider the torus $E=[0,1)$ equipped with the Euclidean metric $d(x,y)=|y-x|\wedge (1-|y-x|)$ and consider the
deterministic map $x \mapsto 2x$ mod 1, $\mu_1=\delta_0$, $\mu_2=\lambda$, where $\lambda$ is the Lebesgue measure on $E$. Both $\delta_0$ and $\lambda$ are invariant and
ergodic and for $\lambda$-almost all $y \in E$ there exists a (deterministic) sequence along which the transition probabilities starting from $y$ converge to $\delta_0$
weakly.
\end{example}

\begin{example}\label{ex52} This example shows that the assumptions of Theorem \ref{unique} or Corollary \ref{corounique} do not guarantee weak convergence of transition
probabilities. Take $E=\{0,1\}$ with transition probabilities $p_{0,1}=p_{1,0}=1$. The assumptions hold with $M=\{(0,0)\}$ respectively $M=\{0\}$ and $\alpha=1$ and there
exists a unique invariant measure $\mu$ but the transition probabilities do not converge to $\mu$. Note however that under the assumptions of Theorem \ref{unique} or
Corollary \ref{corounique}, for any ergodic invariant measure $\mu$ the (time-){\em averaged} transition probabilities converge to $\mu$ for $\mu$-almost all initial
conditions
$y \in E$ by the ergodic theorem.
\end{example}

\begin{example}\label{ex53} This example shows that Theorem \ref{conv2} fails if \eqref{prob1} is replaced by a corresponding averaged limit.
 Consider a deterministic dynamics on an unbounded countable subset $E=\{0,a_1,a_2,...\}$ of $[0,\infty)$ which maps 0 to 0 and $a_1 \to a_2 \to ...$. Choosing the
 sequence such that it has only 0 as an accumulation point we can ensure that the chain is Feller. Clearly, $\delta_0$ is an invariant measure. On the other hand,
 if  the average of the first $n$ members of the sequence converges to 0 then for every $x=a_j\in E$ and $y=0$ the (deterministic) coupling
 $X_n=a_{j+n}, Y_n=0, n\geq 0$ satisfies the averaged analogue of \eqref{prob1}. However, if $a_n\not \to 0$, we have $P_n(x,\cdot) \not\Rightarrow \delta_0$
 for each $x\not=0$.
\end{example}

\begin{example}\label{ex54} This example shows that an ergodic Feller chain is not necessarily an e-chain.
 Consider a deterministic dynamics on the unbounded countable set $E=\{0\}\cup\{2^{-k}, k\geq 0\}$ which maps 0 to 0, 1 to 0, and
$2^{-k}$ to $2^{-k+1}$, $k\geq 1$. Clearly,  the chain is Feller and for every $x\in E$, $P_n(x,\cdot)$ converge as $n\to \infty$ to the unique
invariant measure $\mu=\delta_0$ -- even in the total
 variation distance. However, for any two points $x,y\in E\setminus\{0,1\}$ with, say, $x>y$ there exists $n\geq 1$ such that
 $2^nx=1, 2^ny\in (0,1/2],$ and therefore
 $$
 \sup_n d(P_n(x,\cdot), P_n(y,\cdot))\geq {1\over 2}.
 $$
 On the other hand, for any $\delta>0$ there exist $k,m$ large enough so that $d(2^{-k}, 2^{-m})<\delta$. That is, this chain is not an e-chain.
Note that this example is also not {\em asymptotically strong Feller} (see \cite{HM06} for the definition of this concept). The example
does however satisfy the assumptions of all theorems in Section \ref{mainresults}.
\end{example}

\begin{example}\label{ex55} This example shows that Theorem \ref{conv2} may fail if the assumption $\pi_1(\xi_{x,y})\sim \P_x$ is omitted, and
\eqref{prob1} holds true just for $\xi_{x,y}\in \widehat C(\P_x, \P_y)$. Consider $E=\{0,1,2,...\}$ with transition probabilities
$p_{0,0}=1$ and $p_{i,i-1}=1/3$ and $p_{i,i+1}=2/3$ for $i=1,2,...$. Clearly, $\mu=\delta_0$ is the unique invariant measure, transition probabilities
$P_n(x,.)$ do not converge
to $\mu$ for $x \neq 0$ and for each $x \in \N$ there exists some $\xi\in \widehat  C(\P_x,\P_0)$ such that $X_n \to 0$ almost surely under $\xi$.
\end{example}

\begin{example}\label{comparison} This final example clarifies the relation between condition \eqref{prob1} and the condition
\begin{equation}\label{conv_as}
 \xi_{x,y}(\lim_{n\to \infty}d(X_n, Y_n)=0)>0,
\end{equation}
which was  used in \cite[Theorem 3.1]{HMS11}. Namely, we show that the ``convergence in probability'' type assumption \eqref{prob1} is strictly weaker
than the ``convergence with positive probability'' one \eqref{conv_as}. Since this difference may not be too crucial, in order not
to overburden the exposition we just outline the construction and omit detailed proofs.

Let $E=[0,1) \times \{-1,1\}$ be equipped with the  metric $d((u,i),(v,j))=\widetilde d(u,v)  +|j-i|$, where
$\widetilde d$ denotes the Euclidean metric on the torus $T=[0,1)$ and let $r \in (0,1)\backslash \Q$.
Define a Markov operator $P$ on $E$ as follows: for any $x=(u, i)\in E$,
\begin{eqnarray*}
P\big((u,i),\big\{ (u+r \mbox{ mod } 1,i)\big\} \big)&=&1/2\\
P\big( (u,i),\{(u,-i)\} \big)&=&1/2.
\end{eqnarray*}
It is clear that $P$ is Feller, and there exists at least one invariant probability measure, namely
$\mu=\lambda \otimes \big( \frac 12 \delta_0 + \frac 12 \delta_1\big)$, where $\lambda$ denotes Lebesgue measure on $T$.

In the following we distinguish between {\em components} and {\em coordinates}, the former referring to the first
or second element of a pair $(X,Y)\in E^\infty\times E^\infty$,  and the latter referring to the first or second element of a point $x=(u,i) \in E$. For any generalized
coupling $\xi_{x,y}\in \widehat{C}(\P_x, \P_y)$ the distance of the two components remains constant as long as
their second coordinates are the same, and if the second coordinates differ the distance is at least two. Therefore
the only way the distance of the two components can converge to zero is that they coincide eventually.
Hence for any $x=(u,i), y=(v,j)$ such that $u-v$ is {\em not} an integer multiple of $r$ (mod 1),  it is
clear that there is no $\xi \in \widehat  C(\P_x,\P_y)$ for which the distance between the two components converges to zero
with positive probability. That is, there are no sets $M_1,M_2 \in \EE$ of positive $\mu$-measure such that for each $(x,y) \in M_1 \times M_2$ there exists some
$\xi_{x,y} \in \widehat  C(\P_x,\P_y)$ satisfying \eqref{conv_as}.

On the other hand, for any $x,y\in E$ we can find a coupling $\xi_{x,y} \in  C(\P_x,\P_y)$ such that \eqref{prob1} holds
(and hence $P_n(x,\cdot)\Rightarrow\mu$ for any $x$). Fix $x,y\in E$ and define $\xi_{x,y} \in  C(\P_x,\P_y)$ as a Markov chain $\{(X_n, Y_n), n\geq 0\}$
defined as follows with the function $p(z),z \in [0,1)$ yet to be determined:
\begin{itemize}
  \item if the second coordinates of $X_{n}, Y_{n}$ differ,  then with probability $1/2$,   $X_{n+1}$ changes the second coordinate and $Y_{n+1}$  doesn't and the same
  holds for $X$ and $Y$ interchanged, so in both cases the second coordinates of $X_{n+1}, Y_{n+1}$ coincide;
  \item if the second coordinates of $X_{n}, Y_{n}$ coincide, and the difference (mod 1) between the first coordinates  of $X_n, Y_n$ is $z \in [0,1)$, then
   the second coordinates of  $X_{n+1}$ and  $Y_{n+1}$  either change or stay the same simultaneously, with the probability of each of these two possibilities $1/2(1-p(z))$, and   the probabilities that the second coordinate of  $X_{n+1}$ (resp. $Y_{n+1}$) changes while $Y_{n+1}$ (resp. $Y_{n+1}$) doesn't,  are equal $1/2p(z)$.
\end{itemize}
By construction, if at some moment the second coordinates differ, they become equal immediately afterwards. Consider the sequence $\{Z_n\}$  of differences of the first
coordinates of $\{(X_n, Y_n)\}$. If $Z_n=z$ and the second coordinates of $X_n$ and $Y_n$ coincide, then $Z$ will keep taking the value $z$ for a geometric number of steps
with expected value $1/p(z)$, then the second coordinates of $X,Y$ will be different for one time unit after which they become the same again and $Z$ takes the values
$z,\,z+2r$, and $z-2r$ with probabilities $1/2,\, 1/4$, and $1/4$ respectively. It is not hard to see (and easy to believe) that if the continuous function  $z \mapsto p(z)$ is chosen such that
$p(0)=0$, $p(z)>0$ for $z \neq 0$ and $p(z)$ approaches 0 as $z \to 0$ sufficiently fast, then both $Z_n$ and the indicator $1_{X_n\neq Y_n}$ will
converge to 0 in probability since $Z_n$ is very likely to take a value close to 0 when $n$ is large.
\end{example}

\section{Applications: SFDEs and SPDEs}\label{sSDDE}

In this section we illustrate our main results applying them to stochastic functional differential equations (SFDEs) and stochastic partial differential equations (SPDEs).

\subsection{Stochastic delay equations}\label{s31}
Denote $C:=C([-1,0],\R^m)$, and for a function or
a process $X$  defined on $[-1,t]$  write $X_t (s) := X(t + s), s \in [-1, 0]$. Consider the SFDE
\begin{align}\label{SFDE}
\dd X(t)&=F(X_t)\,\dd t + G(X_t)\,\dd W(t),\\
X_0&=f \in C,
\end{align}
where $F:C \to \R^m$ and $G:C \to \R^{m\times m}$ satisfy a global Lipschitz condition with respect to the supremum norm and $W$ is a
standard Wiener process in $\R^m$. Assume the  non-degeneracy condition
\begin{equation}\label{non-deg}
\sup_{f \in C}\big | G^{-1}(f))\big |<\infty,
\end{equation}
where $G^{-1}(f)$ denotes the generalized (or Moore-Penrose) inverse matrix of $G(f)$, $f\in C$.

This model was well studied in \cite{HMS11}, where it was proved that the  $C$-valued solution process $X_t$, $t \ge 0$ is uniquely defined, is a Feller process, and has at most
one invariant probability measure $\mu$ in which case all transition probabilities converge to $\mu$ weakly (for ease of exposition we
have imposed slightly stronger  assumptions on $F$ and $G$ compared to \cite{HMS11}).

Here we use this model to benchmark our results. Namely, we will show that these results can be applied yielding the same conclusions, but in a considerably easier and more straightforward way.

Like in  \cite{HMS11}, we fix a pair of initial conditions $f$ and $g$ in $C$, and consider the
pair of equations
$$
\begin{aligned}
\dd X(t)&=F(X_t)\,\dd t +G(X_t)\,\dd W(t),&X_0=f,\\
\dd Y(t)&=F(Y_t)\,\dd t + \lambda(X(t)-Y(t))\,\dd t + G(Y_t)\,\dd W(t),\quad &Y_0=g.
\end{aligned}
$$
It is shown in \cite{HMS11}, Section 3 that if $\lambda >0$ is sufficiently large (when compared with the Lipschitz constants for $f,g$),  then with probability 1
$$
|X(t)-Y(t)|\to 0, \quad t\to \infty
$$
exponentially fast, and thus
\begin{equation}\label{delta_2}
  \int_0^\infty \big |X(t)-Y(t)\big |^2 \,\dd t <\infty
\end{equation}
(the proofs are not very long and are based on basic stochastic calculus arguments). Observe that the equation for $Y$ can be re-written to the form
\begin{equation}\label{SDDE}
\dd Y(t)=F(Y_t)\,\dd t + \lambda(X(t)-Y(t))\,\dd t + G(Y_t)\,\dd \widehat W(t),\quad Y_0=g
\end{equation}
with
$$
\widehat W(t)=W(t)+\int_0^t\beta_s\,\dd s, \quad \beta_t:=\lambda(X(t)-Y(t))G^{-1}(Y_t).
$$
Combining \eqref{delta_2} with \eqref{non-deg}, we see that
$$
\int_0^\infty\beta_t^2\, dt<\infty
$$
with probability 1. Then by the Girsanov theorem the law of $\widehat W$ on $C([0, \infty), \R^m)$ is absolutely continuous w.r.t. the law of the Wiener process $W$; cf. \cite{LipSher},
Theorem 7.4. Because $Y$ is the strong solution to \eqref{SDDE}, this yields immediately that the law of
of $Y(t), t\in [-1, \infty)$ is absolutely continuous with
respect to the law of the solution to \eqref{SFDE} with initial condition $g$. On the other hand, $X$ is just the solution to \eqref{SFDE} with initial
condition $f$, hence the joint law $\xi$ of the pair $X,Y$ is a generalized coupling from the class $\widehat{C}(\P_f, \P_g)$ which satisfies
the additional condition $\pi_1(\xi)=\P_f$. Applying the continuous-time version of Corollary \ref{coroconv}, we directly obtain weak convergence of all transition probabilities
to the unique invariant probability measure
(in the case it exists).

We note that the simple construction explained  above can not be applied directly within the approach developed in \cite{HMS11}.
Theorem 3.1 in \cite{HMS11}, which provides uniqueness of the invariance measure, exploits a generalized coupling which belongs to the class $\widetilde{C}(\P_f, \P_g)$.
It is difficult to guarantee the \emph{equivalence} of the law of $Y$ to $\P_g$ using just the Girsanov theorem; this is the reason why in the proof of
uniqueness in \cite{HMS11}  a more sophisticated construction of the generalized coupling is used which involves localization in time. The proof of
Theorem 3.7 in \cite{HMS11}, which states the  weak convergence of transition probabilities to the invariant measure, contains an extra analysis which actually
shows that $X$ is an e-process. None of these additional considerations are required in our approach. This is a clearly seen advantage, which makes it possible to extend the uniqueness
results to asymptotic stability (almost) for free. Below we show that such a possibility is quite generic and is available  as well  in SPDE setting.

\subsection{SPDEs}\label{s32}

 In this section we show that in each of the five SPDE models studied in \cite{GMR15} only a minor modification of the constriction of a generalized coupling allows us to apply
 Corollary \ref{coroconv} and thus to obtain the asymptotic stability of the model rather than just unique ergodicity. Such a drastic improvement becomes possible thanks to
 Theorem \ref{conv1} and Theorem \ref{conv2}, and illustrates the usefulness of these results. To simplify the cross-references, within this section we mainly adopt the notation
 from \cite{GMR15} even if it does not correspond to the notation introduced in Section \ref{basic}. The methodology will be similar for all the five models, hence we explain most
 details for the first one and then just sketch the argument for the other four. Throughout this section we denote  by $H^r$ the Sobolev classes $H^r_2(\D)$ with a domain $\D$
 which varies from
 model to model. The $L_2$-norm and the $H^1$-norm are denoted $|\cdot|$ and $\|\cdot\|$ respectively, for all  other norms  are indicated explicitly.
 We also denote by $\lambda_n,  n\geq 1$ the increasingly enumerated eigenvalues of an operator $A$, which will be specified in each model separately, and by
 $P_N$ the  projector onto the span of the
 respective first $N$ eigenvectors.

\subsubsection{2D Navier-Stokes on a domain}

Consider the 2D stochastic Navier-Stokes equation on $\D\subset \R^2$
\begin{equation}\label{2dNS1}
\dd \mathbf{u} + \mathbf{u}  \cdot \nabla \mathbf{u} \,\dd t = (\nu\Delta \mathbf{u}  +\nabla \pi + \mathbf{f}) \,\dd t +\sum_{k=1}^m
\sigma_k \dd W_k,\quad  \nabla \cdot \mathbf{u}  = 0,
\end{equation}
with the unknown velocity field $\mathbf{u} = (u_1, u_2)$ and the unknown pressure $\pi$. The bounded domain  $\D$ is assumed to have smooth $\partial\D$, and  the
no-slip (Dirichlet)
boundary condition on $\mathbf{u}$ is imposed:
\begin{equation}\label{2dNS2}
\mathbf{u}|_{\partial\D} = 0.
\end{equation}
The deterministic vector fields $\mathbf{f}, \sigma_1, \dots, \sigma_m\in L_2(\D)^2$  and independent standard Brownian motions $W_1, . . . ,W_m$ are fixed.

Denote by $V$ the subspace of $H^1(\D)^2$,  which contains $\mathbf{u}$ such that $\nabla \cdot \mathbf{u}  = 0$ and $\mathbf{u}\cdot \mathbf{n}  = 0$
(where $\mathbf{n}$ denotes the
outward normal for $\partial\D$). Denote by $H$ the completion of $V$ w.r.t. the $L_2(\D)^2$-norm, by $P_H$ the projector in $L_2(\D)^2$ on $H$, and by $A=-P_H\Delta$
the \emph{Stokes operator}.

It is known that for any $\mathbf{u}_0\in H$ the system \eqref{2dNS1}, \eqref{2dNS2} with the initial data $\mathbf{u}_0\in H$
admits a unique (strong) solution with values in $H$, and this solution  depends continuously on $\mathbf{u}_0\in H$. That is, \eqref{2dNS1}, \eqref{2dNS2} defines a
Feller Markov process
valued in $H$; we refer for details to \cite{GMR15}, Section 3.1.1.

Now we explain the  generalized coupling construction for this system. Fix arbitrary $\mathbf{u}_0,\widetilde{\mathbf{u}}_0\in H$ and consider
$\mathbf{u}=\mathbf{u}(\cdot , \mathbf{u}_0)$
solving \eqref{2dNS1}, \eqref{2dNS2} with initial data $\mathbf{u}_0$, and $\widetilde{\mathbf{u}}$ solving
$$
\dd\widetilde{\mathbf{u}} +\widetilde{\mathbf{u}}  \cdot \nabla\widetilde{\mathbf{u}} \,\dd t = (\nu\Delta\widetilde{\mathbf{u}} +
\lambda P_N(\mathbf{u} - ˜\widetilde{\mathbf{u}}) +
\nabla \varpi + \mathbf{f}) \,\dd t +\sum_{k=1}^m
\sigma_k \dd W_k,\quad  \nabla \cdot\widetilde{\mathbf{u}}  = 0,
$$
$$
\widetilde{\mathbf{u}}|_{\partial\D} = 0
$$
with initial data $\widetilde{\mathbf{u}}_0$; here $\lambda$ and the number $N$
 are yet to be chosen; recall that $P_N$ is the projector which is defined in the terms of the operator $A$.

 This   construction is based on the ``stochastic control'' argument, similar to the one developed in \cite{HMS11}, Section 3; see also
 \cite{GMR15}, Section 2.4.
 The similar coupling construction in \cite{GMR15}, Section 3.1.2 involves an additional localization term $1_{\tau_K>t}$, and the corresponding
 generalized coupling is
 defined as the conditional law of the pair $({\mathbf{u}}, \widetilde{\mathbf{u}})$ on the set $\{\tau_K=\infty\}$. This gives a generalized
 coupling from the class $\widehat C(\P_{\mathbf{u}}, \P_{\widetilde{\mathbf{u}}_0})$. Because of the conditioning, the law of the first component have
 no reason to be equivalent to $\P_{\mathbf{u}}$. The latter condition is however crucial for our Theorem \ref{conv2}; see Remark \ref{rem29} and Example \ref{ex55}.
 We resolve this difficulty in a similar way we did in Section \ref{s31}. Namely, we remove the localization term and consider the law of the pair
 $({\mathbf{u}}, \widetilde{\mathbf{u}})$ as the required generalized coupling. This leads only to minor modifications in the respective calculus, as we
 explain below, but it allows to apply our main results in order to derive asymptotic stability.

  The difference $\mathbf{v}:=\mathbf{u}-\widetilde{\mathbf{u}}$  satisfies
\begin{equation}\label{eq}
\dd \mathbf{v}-\nu \Delta  \mathbf{v}\, \dd t+1_{\tau>t}\lambda P_N\mathbf{v}\, \dd t=-\nabla \pi+\nabla\varpi
+\widetilde{\mathbf{u}}\cdot \nabla \mathbf{u} +\mathbf{u}\cdot \nabla\widetilde{\mathbf{u}}, \quad \nabla \cdot \mathbf{v}=0, \quad \mathbf{v}|_{\partial\D} = 0.
\end{equation}
Like in \cite{GMR15}, Section 3.1.2,  multiplying \eqref{eq} by $\mathbf{v}$, integrating over $\D$, and using that $\mathbf{u},\widetilde{\mathbf{u}},$ and
$\mathbf{v}$ are all divergence free and satisfy the Dirichlet boundary condition, one gets $$
\begin{aligned}
\frac{1}{2}\dd |\mathbf{v}|^2 + \nu\|\mathbf{v}\|^2\, \dd t+  \lambda |P_N \mathbf{v}|^2 \dd t
&\leq \left|\int_{\D} \mathbf{v}\cdot \nabla \mathbf{u}\cdot \mathbf{v}\, \dd x\right|\, \dd t
\\&\leq C_\D |\mathbf{v}|\|\mathbf{v}\|\|\mathbf{u}\|\, \dd t\leq \left(\frac{\nu}{2}\|\mathbf{v}\|^2+
\frac{C_\D}{2\nu}|\mathbf{v}|^2\|\mathbf{u}\|^2\right)\, \dd t
\end{aligned}
$$
with a universal constant $C_\D$ which  involves the quantities  from Sobolev embedding. By the Poincar\'e inequalities \cite{GMR15} (3.3),
for the particular choice $\lambda =\nu \lambda_N/2$ we get
$$
\lambda |P_N \mathbf{v}|^2+\frac{\nu}{2}\|\mathbf{v}\|^2\geq \lambda|\mathbf{v}|^2, \quad \lambda\leq \nu \lambda_N/2.
$$
Taking $\lambda=\nu \lambda_N/2$ we obtain
$$
\dd |\mathbf{v}|^2\leq \Big(-\nu \lambda_N1_{\tau>t}+\frac{C_\D}{\nu}\|\mathbf{u}\|^2\Big)|\mathbf{v}|^2\, \dd t,
$$
and finally by Gronwall's lemma
\begin{equation}\label{2Dexp}
|\mathbf{v}(t)|^2\leq |\mathbf{u}_0-\widetilde{\mathbf{u}}_0|^2\exp\left(-\nu\lambda_N t+\frac{C_\D}{\nu}\int_0^t\|\mathbf{u}(s)\|^2\, \dd s\right), \quad t\geq 0.
\end{equation}

One has  with probability 1
\begin{equation}\label{liminf}
  \limsup_{t\to \infty}\frac{1}{t}\int_0^t \|\mathbf{u}(s)\|^2\, \dd s\leq  \frac{|A^{-1/2}\mathbf{f}|^2}{2\nu^2}+\frac{|\sigma|^2}{\nu},
\end{equation}
where  $|\sigma|^2:= \sum_{k=1}^m |\sigma_k|^2$; this follows from the energy estimate \cite{GMR15} (3.5). Hence $N$ satisfies
\begin{equation}\label{sup}
\lambda_N>C_\D\left(\frac{|A^{-1/2}\mathbf{f}|^2}{2\nu^4}+\frac{|\sigma|^2}{\nu^3}\right),
\end{equation}
with probability 1 the right hand side term in \eqref{2Dexp} tends to 0 exponentially fast.

On the other hand, consider
$\sigma$ as a linear operator $\R^m\to H$ and assume that, for the given $N$,
\begin{equation}\label{range}
H_N:=P_NH\subset\mathrm{Range}\, (\sigma)=\mathrm{Span}\, (\sigma_k, k=1, \dots, m).
\end{equation}
Then the corresponding pseudo-inverse operator $\sigma^{-1}: H_N\to \R^m$ is well defined and bounded. Then the principal equation for $\widetilde{\mathbf{u}}$ can be written
in the form
$$
\dd\widetilde{\mathbf{u}} +\widetilde{\mathbf{u}}  \cdot \nabla\widetilde{\mathbf{u}} \,\dd t = (\nu\Delta\widetilde{\mathbf{u}} + \nabla \varpi + \mathbf{f}) \,\dd t +\sum_{k=1}^m
\sigma_k \dd \widetilde W_k
$$
with
$$
\widetilde W(t)=W(t)+\int_0^t\beta_s\,\dd s, \quad \beta_t:=\lambda \sigma^{-1}P_N\mathbf{v}(t)
$$

Since $\sigma^{-1}$ is bounded and $|v(t)|$ tends to $0$ exponentially fast, we have
\begin{equation}\label{beta}
\P\left(\int_0^t\|\beta_s\|^2_{\R^m}\,\dd s<\infty\right)=1,
\end{equation}
 and the law of $\widehat W$ is absolutely continuous w.r.t. the law of $W$. Thus the law of $\widetilde{\mathbf{u}}$  is absolutely continuous w.r.t. the law of the solution
 to \eqref{2dNS1}, \eqref{2dNS2} with initial data $\widetilde{\mathbf{u}}_0$. Note that the law of the first component w.r.t. this coupling just equals $\P_{\mathbf{u}_0}$
 and the distance between the components tend to $0$ exponentially fast as $t\to \infty$. Hence  the law of the pair $(\mathbf{u}(\cdot),\widetilde{\mathbf{u}}(\cdot))$ can
 be used as the coupling required in Corollary \ref{coroconv}  with $E=H,\rho=d=|\cdot-\cdot|\wedge 1$. We conclude that  in the framework of  Proposition 3.1 \cite{GMR15}, which
 states unique ergodicity for \eqref{2dNS1}, \eqref{2dNS2},  the following stabilization property actually holds true:
\begin{center}
\emph{for any $\mathbf{u}\in H$, the transition probabilities $P_t(\mathbf{u}, \cdot)\in \mathcal{P}(H)$
weakly converge as $t\to \infty$ to the unique invariant measure.}
\end{center}

\subsubsection{2D Hydrostatic Navier-Stokes Equations}

Next, following \cite{GMR15} Section 3.2, we  consider a stochastic version of the 2D Hydrostatic Navier-Stokes equation
\begin{equation}\label{HNS1}\begin{aligned}
  \dd u&+ (u\partial_x u+w\partial_zu+\partial_xp-\nu\Delta u)\, \dd t =\sum_{k=1}^m \sigma_k\dd W_k,\\
\prt_zp &= 0,\\
\prt_xu &+\prt_z w = 0
\end{aligned}
\end{equation}
for an unknown velocity field $(u,w)$ and pressure $p$ evolving on the domain $\D = (0,L)\times(−h, 0)$. The boundary $\prt D$ is decomposed into its vertical sides
$\Gamma_v = [0,L] \times \{0,−h\}$ and lateral sides $\Gamma_l = \{0,L\} \times [−h, 0]$, where
the boundary conditions  are imposed:
\begin{equation}\label{HNSboundary}
  u = 0 \quad \hbox{on $\Gamma_l$}, \quad  \prt_zu = w = 0 \quad \hbox{on $\Gamma_v$}.
\end{equation}
Denote
$$
H=\left\{f\in L_2(\D):\int_{-h}^0u\, \dd z\equiv 0\right\}, \quad V=\left\{u\in H^1(\D):\int_{-h}^0u\, \dd z\equiv 0, \, u|_{\Gamma_l}=0\right\}.
$$
Denote also by $P_H$ the projector in $L_2(\D)$ on $H$, and put $A=-P_H\Delta$.

 It is known (see \cite{GMR15}, Section 3.2.1) that under a proper condition on the family $\{\sigma_k\}$ for a given $u_0\in V$ the system (\ref{HNS1}), \eqref{HNSboundary}
 has a unique strong solution, which in addition depends continuously on $u_0\in V$. Thus the system (\ref{HNS1}), \eqref{HNSboundary} defines a Feller Markov process in $E=V$.

Now we explain the generalized coupling construction for (\ref{HNS1}), \eqref{HNSboundary}.
For fixed $u_0, \widetilde u_0\in V$, consider the solution $u$ to (\ref{HNS1}), \eqref{HNSboundary} with the initial data $u_0$ and the solution $\widetilde u$  to a similar
system with the first equation changed to
$$
\dd \widetilde u+ (\widetilde u\partial_x \widetilde u+w\partial_z \widetilde u+\partial_x \widetilde p-\nu\Delta \widetilde u+\lambda P_N (u-\widetilde u))\, \dd t
=\sum_{k=1}^m \sigma_k\dd W_k
$$
with $\lambda=\nu\lambda_N/2$. One has
\begin{equation}\label{Hlip}
|v(t)|\leq \exp\left(-2\lambda t+C\int_0^t\Big(\|u(s)\|^2+|\prt_z u(s)|\|\prt_z u(s)\|\Big)\, \dd s\right)|v(0)|, \quad t\geq 0
\end{equation}
with a constant $C$ depending only on $\nu$ and $\D$; see (3.22), \cite{GMR15}. Next, there exists $C_1$ depending only on $\nu, \D$, and $|\sigma|^2+|\prt_z\sigma|^2$ such that
\begin{equation}\label{Hliminf}
  \limsup_{t\to \infty}\frac{1}{t}\int_0^t \Big(\|u(s)\|^2+\|\prt_z u(s)\|^2\Big)\, \dd s\leq  C_1
\end{equation}
with probability 1; this follows from the energy estimates (3.16), (3.18) \cite{GMR15}. If  $N$ is large enough, so that $2\lambda=\nu\lambda_N>C C_1$, the above inequalities
yield that the $H$-norm $|v(t)|$ tends to zero as $t\to \infty$ exponentially fast. If in addition for such $N$ \eqref{range} holds true, then one can interpret $\widetilde u$
as the solution to (\ref{HNS1}), \eqref{HNSboundary} with the initial data $\widetilde u_0$ and $W$ changed to
$$
\widetilde W(t)=W(t)+\int_0^t\beta_s\,\dd s, \quad \beta_t:=\lambda \sigma^{-1}P_N {v}(t).
$$
Since the pseudo-inverse operator $\sigma^{-1}:H_N\to \R^m$ is bounded and $|v(t)|$ decays exponentially fast, we have (\ref{beta}). Hence the law of $\widetilde W$ is
absolutely continuous w.r.t. the law of $W$ and therefore  the law of $\widetilde u$ in $C([0, \infty), V)$ is  absolutely continuous w.r.t. $\P_{\widetilde u_0}$. Recall
that  the Markov process which corresponds to (\ref{HNS1}), \eqref{HNSboundary} is  well defined and is Feller on $V$. However, it is an easy observation that this process is
$H$-Feller, as well. Namely, inequality \eqref{Hlip} actually holds true for any $\lambda\leq \nu\lambda_N/2$, and taking $\lambda=0$ we easily deduce the $H$-continuity
of the semigroup.

We  take $E=V$,  $\rho=\|\cdot-\cdot\|\wedge 1, d=|\cdot-\cdot|\wedge 1$; note that  condition \eqref{approx} holds true with  $\rho_n^y(x)=|P_n(y-x)|, n\geq 1, y\in E.$   In this setting, we apply continuous time version of Corollary \ref{coroconv} with the generalized coupling $\xi$
 defined as the joint law of processes $u(\cdot), \widetilde u(\cdot)$ defined above. We conclude that in the framework of  Proposition 3.2 \cite{GMR15}, which states unique
 ergodicity for (\ref{HNS1}), \eqref{HNSboundary}, in addition the following (weak) $L_2$-stabilization property holds:
\begin{center}
\emph{for any $u\in V$, the transition probabilities $P_t(u, \cdot)\in \mathcal{P}(V)$
weakly converge in the $L_2$-topology as $t\to \infty$ to the unique invariant measure $\mu\in\mathcal{P}(V)$.}
\end{center}

\subsubsection{The fractionally dissipative Euler model}

Next, following \cite{GMR15} Section 3.3, we consider  the fractionally dissipative Euler model, described by the system
\begin{equation}\label{dEu}
\dd \xi+\left(\Lambda^\gamma\xi+\mathbf{u}\cdot \nabla\xi\right)\, \dd t=\sum_{k=1}^m\sigma_k \dd W_k, \quad \mathbf{u}=\K*\xi
\end{equation}
for an unknown vorticity field $\xi$ (this is the notation borrowed from \cite{GMR15}, which is not to be mixed with the notation for a coupling we used previously).
Here $\Lambda^\gamma = (-\Delta)^\gamma$ is the fractional Laplacian with $\gamma\in (0, 2]$, $\K$  is the Biot-Savart kernel, so that $\nabla^\perp \cdot \mathbf{u}= \xi$
and $\nabla \mathbf{u} = 0$, and \eqref{dEu} is posed on the
periodic box $\TT^2 = [−\pi, \pi]^2$. In the velocity formulation, \eqref{dEu} has the form
\begin{equation}\label{dEu2}
\dd \mathbf{u} + \Big(\Lambda^\gamma \mathbf{u} + \mathbf{u}\cdot \nabla \mathbf{u}+ \nabla \pi\Big) \,\dd t = \sum_{k=1}^m
\sigma_k \dd W_k,\quad  \nabla \cdot \mathbf{u}  = 0,
\end{equation}
where the unknowns are the velocity field $\mathbf{u}$ and the pressure $\pi$.
It is known that for a fixed $r>2$ for any given $\mathbf{u}_0\in H^r$ there exists a unique strong solution to \eqref{dEu2} taking values in $H^r$ and this solution
depends on the initial data $\mathbf{u}_0\in H^r$ continuously. That is,   \eqref{dEu2} defines a Feller Markov process valued in $H^r$; see \cite{GMR15}, Section 3.3.1.

For fixed $\mathbf{u}_0, \widetilde{\mathbf{u}}_0\in H^r$, consider the function $\mathbf{u}(\cdot)$ solution to \eqref{dEu2} with the initial data $\mathbf{u}_0$ and the
function $\widetilde{\mathbf{u}}(\cdot)$ solving
$$
\dd\widetilde{\mathbf{u}} + \Big(\Lambda^\gamma\widetilde{\mathbf{u}}-\lambda P_N(\mathbf{u}-\widetilde{\mathbf{u}}) +\widetilde{\mathbf{u}}\cdot \nabla \widetilde{\mathbf{u}}+
\nabla \widetilde{\pi}\Big) \,\dd t = \sum_{k=1}^m
\sigma_k \dd W_k,\quad  \nabla \cdot \widetilde{\mathbf{u}}  = 0
$$
with the initial data $\widetilde{\mathbf{u}}_0$ and $P_N$ which now denotes the projector which corresponds to the eigenfunctions of $A=\Lambda^\gamma$. This is actually the
generalized coupling construction from \cite{GMR15}, Section 3.3.2, where in the additional control term we remove the localization term $1_{\tau_K>t}$. Denote
$\mathbf{v}=\mathbf{u}-\widetilde{\mathbf{u}} $, then for $\lambda\leq \lambda_N/2$
$$
|\mathbf{v}(t)|^2\leq \exp\left(-2\lambda t+C\int_0^t\|\xi(s)\|^2_{L_p}\, \dd s\right)|\mathbf{v}(0)|^2, \quad t\geq 0
$$
with properly chosen $p>1$ and universal $C$;  see  \cite{GMR15}, (3.28). On the other hand, there exists a universal $C_1$ such that
\begin{equation}\label{dEuliminf}
  \limsup_{t\to \infty}\frac{1}{t}\int_0^t \|\xi(s)\|^2_{L_p}\, \dd s\leq  C_1\|\sigma\|^2_{L_p}
\end{equation}
with probability 1, where
$$
\|\sigma\|_{L_p}=\left(\int_{\TT^2}\left(\sum_{k=1}^m\sigma_k^2\right)^{p/2}\, \dd x\right)^{1/p}.
$$
This follows from  the energy estimate (3.31)  \cite{GMR15}.

Now we can repeat literally the argument from the previous subsection. Taking in the above calculation $\lambda=0$, we see that the Markov process is $H$-Feller
with $H=L_2(\TT^2)$. Taking $\lambda=\lambda_N/2$, we get that if, for some $N$, $\lambda_N>C C_1$ and \eqref{range} holds, then the law of the pair
$(\mathbf{u}(\cdot),\widetilde{\mathbf{u}}(\cdot))$ can be used as the coupling required in Corollary \ref{coroconv} with
$E=H^r,\rho=\|\cdot-\cdot\|_{H^r}\wedge 1, d=|\cdot-\cdot|\wedge 1$ (again, condition \eqref{approx} is easy to verify). We conclude that  in the framework of  Proposition 3.3 \cite{GMR15}, which states unique ergodicity
for  \eqref{dEu2}, in addition the following (weak) $L_2$-stabilization property holds:
\begin{center}
\emph{for any $u\in H^r$, transition probabilities $P_t(u, \cdot)\in \mathcal{P}(H^r)$
weakly converge in the $L_2$-topology as $t\to \infty$ to the unique invariant measure $\mu\in\mathcal{P}(H^{r})$.}
\end{center}

\subsubsection{The damped stochastically forced Euler-Voigt model}

Next, we consider  an inviscid
``Voigt-type'' regularization of a damped stochastic Euler equation:
\begin{equation}\label{EV}
\dd \mathbf{u} + \Big(\gamma  \mathbf{u} + \mathbf{u}_\alpha\cdot \nabla \mathbf{u}_\alpha+ \nabla p\Big) \,\dd t = \sum_{k=1}^m
\sigma_k \dd W_k,\quad  \nabla \cdot \mathbf{u}  = 0,
\end{equation}
with for some $\gamma>0$ and the unknown vector field $\mathbf{u}$, where the
non-linear terms are subject to an $\alpha$-degree regularization
$$
(-\Delta)^{\alpha/2}\mathbf{u}_\alpha=\Lambda^\alpha\mathbf{u}_\alpha=\mathbf{u}.
$$
The absence of a parabolic regularization mechanism brings specific difficulties to the  analysis of the model, we refer to \cite{GMR15},
Sections 3.4.1 -- 3.4.3 for details. Surprisingly, the construction of the generalized coupling which leads to the stability of the model does not bring
substantial novelties and can be provided within the same lines we discussed previously. To shorten the exposition, we consider only the case of a 2D model
evolving on the periodic box $\TT^2$, and assume $\alpha>2/3$. In this case, Proposition 3.4, \cite{GMR15} shows that
for any $\mathbf{u}_0\in H^{1-\alpha/2}$ there exists unique strong solution to \eqref{EV} with the initial data, and the corresponding semigroup is
Feller w.r.t. $H^{-\alpha/2}$ norm.

Next, for fixed $\mathbf{u}_0, \widetilde{\mathbf{u}}_0\in H^{1-\alpha/2}$, consider the function $\mathbf{u}(\cdot)$ solution to \eqref{EV} with the initial data
$\mathbf{u}_0$ and the function $\widetilde{\mathbf{u}}(\cdot)$ solving
$$
\dd \widetilde{\mathbf{u}} + \Big(\gamma  \mathbf{u} -\lambda P_N(\mathbf{u}-\widetilde{\mathbf{u}}) + \widetilde{\mathbf{u}}_\alpha\cdot \nabla
\widetilde{\mathbf{u}}_\alpha+ \nabla \widetilde {p}\Big) \,\dd t = \sum_{k=1}^m
\sigma_k \dd W_k,\quad  \nabla \cdot \mathbf{u}  = 0
$$
with the initial data $\widetilde{\mathbf{u}}_0$; now $P_N$ denotes the projector on the span of $N$ first elements in the sinusoidal basis. Since $\alpha>2/3$,
there exists $\delta>0$ such that $H^{\alpha/2-\delta}\subset L^3$. For such $\delta$, inequalities (3.45), (3.46) \cite{GMR15} provide  the following bound for
$\mathbf{v}=\mathbf{u}-\widetilde{\mathbf{u}} $:
$$
{1\over 2}\dd \|\mathbf{v}\|^2_{H^{-\alpha/2}}+\Big(\gamma-C\bigg(\lambda^{-1}+N^{-\delta}\Big)\Big(1+\|\xi\|^2_{H^{-\alpha/2}}\Big)\bigg)\|\mathbf{v}\|^2_{H^{-\alpha/2}}\,
\dd t\leq 0,
$$
where $\xi=\mathrm{curl}\, \mathbf{v}$ and constant $C$ depends only on $\delta, \alpha$. On the other hand, with probability 1
$$
\limsup_{t\to \infty}\frac{\gamma}{t}\int_0^t \|\xi(s)\|^2_{H^{-\alpha/2}}\, ds\leq \|\varsigma\|^2_{H^{-\alpha/2}}
$$
with $\varsigma= \mathrm{curl}\, \sigma$; see \cite{GMR15}, Section 3.4.1. Hence, if $\lambda, N$ are taken large enough, $\|\mathbf{v}(t)\|^2_{H^{-\alpha/2}}$
tends to 0 as $t\to \infty$ exponentially fast. Repeating literally the same arguments as before we obtain  that, if $H_N\subset \mathrm{Range}(\sigma)$, the law of
the pair  $(\mathbf{u}(\cdot),\widetilde{\mathbf{u}}(\cdot))$ can be used as the coupling required in Corollary \ref{coroconv} with
$E=H^{1-\alpha/2}, \rho=\|\cdot-\cdot\|_{H^{1-\alpha/2}}\wedge 1, d=\|\cdot-\cdot\|_{H^{-\alpha/2}}\wedge 1$ (again, condition \eqref{approx} is easy to verify). We conclude that  in the framework of  Proposition 3.4 \cite{GMR15}, which
states unique ergodicity for  \eqref{EV}, in addition the following (weak) $H^{-\alpha/2}$-stabilization property holds:
\begin{center}
\emph{for any $u\in H^{1-\alpha/2}$, the transition probabilities $P_t(u, \cdot)\in \mathcal{P}(H^{1-\alpha/2})$
weakly converge in the $H^{-\alpha/2}$-topology as $t\to \infty$ to the unique invariant measure $\mu\in\mathcal{P}(H^{1-\alpha/2})$.}
\end{center}

\subsubsection{The damped nonlinear wave equation}

Finally, we consider the damped Sine-Gordon equation which is  written
as the system of stochastic partial differential equations
\begin{equation}\label{SG}
\dd v + \Big(\alpha v - \Delta u +\beta\sin(u)\Big)\, \dd t =  \sum_{k=1}^m
\sigma_k \dd W_k,\quad  \dd u=v\, \dd t,
\end{equation}
where the unknown $u$ evolves on a bounded domain $\D\subset \R^n$ with smooth boundary, and satisfies the Dirichlet boundary
condition $u|_{\prt\D}\equiv0$. The parameter $\alpha$  is strictly positive and $\beta$ is a  real number.

It is known (see \cite{GMR15}, Section 3.5.1) that for any initial data $U_0=(u_0, v_0)\in X:=H^1_0(\D)\times L_2(\D)$ there exists a unique strong solution to \eqref{SG},
and moreover \eqref{SG} defines a Feller Markov process in $X$. In this final example the generalized coupling construction proposed in \cite{GMR15}, Section 3.5.2 is
already well adapted for our purposes. Within this construction, they put
$$
\dd \widetilde{v} + \Big(\alpha \widetilde{v} - \Delta \widetilde{u} +\beta\sin(\widetilde{u})-\beta 1_{\tau_K>t}P_N(\sin({u})-\sin(\widetilde{u}))\Big)\, \dd t =  \sum_{k=1}^m
\sigma_k \dd W_k,\quad  \dd u=v\, \dd t,
$$
with the initial data $(\widetilde{u}_0,\widetilde{v}_0)$ and
$$
\tau_K=\inf\left\{t:\int_0^t|u(t)-\widetilde u(s)|^2\, ds\geq K\right\}.
$$
They prove that for $N,K$ sufficiently large $\tau_K=\infty$ a.s., and the difference $w=u-\widetilde u$ satisfies
$$
\|w(t)\|^2+|\prt_t w(t)|^2\to 0, \quad t\to \infty
$$
exponentially fast.  This means that, if $H_N\subset \mathrm{Range}(\sigma)$,  the joint law of the solutions $U=(u,v), \widetilde U=(\widetilde{u},\widetilde{v})$
can be used as the coupling required in Corollary \ref{coroconv} with $E=X$, $\rho=d=\|\cdot-\cdot\|_{X}\wedge 1$.
We conclude that  in the framework of  Proposition 3.5 \cite{GMR15}, which states unique ergodicity for  \eqref{SG}, in addition the following stabilization property holds:
\begin{center}
\emph{for any $(u,v)\in X=H^1_0(\D)\times L_2(\D)$, transition probabilities $P_t\big((u,v), \cdot\big)\in \mathcal{P}(X)$
weakly converge  as $t\to \infty$ to the unique invariant measure.}
\end{center}

\appendix

\section{Proofs of Propositions \ref{newlemma} and \ref{singular}}

\begin{proof}[Proof of Proposition \ref{newlemma}]
 \emph{I.} Take an arbitrary $\xi\in \widehat  C^R_p(\P,\Q)$, and consider the sets
$$
B_\gamma^1=\left\{x: {\dd \pi_1(\xi)\over \dd\P}(x)\leq \gamma^{-1}\right\},\quad  B_\gamma^2=\left\{x: {\dd \pi_2(\xi)\over \dd\Q}(x)\leq \gamma^{-1}\right\},\quad
C_\gamma=B_\gamma^1\times B_\gamma^2, \quad \gamma\in (0,1).
$$
Define the sub-probability measure $\eta_\gamma$ on $(E^\infty\times E^\infty, \mathcal{E}^{\otimes \infty}\otimes \mathcal{E}^{\otimes \infty})$ by
$$
\eta_\gamma(A)=\gamma \xi(A\cap C_\gamma).
$$
Then the ``marginal distributions''  $\pi_i(\eta_\gamma), i=1,2$ (which now are sub-probability measures, as well) satisfy
$$
\pi_1(\eta_\gamma)\leq \P, \quad \pi_2(\eta_\gamma)\leq \Q.
$$
Denote
$$
\beta_\gamma=\eta_\gamma(E^\infty\times E^\infty)=\gamma\xi(C_\gamma)\leq \gamma<1,
$$
then each of the measures  $\P-\pi_1(\eta_\gamma), \Q-\pi_2(\eta_\gamma)$ has total mass $1-\beta_\gamma.$ We put
\begin{equation}\label{split}
\zeta_\gamma=\eta_\gamma+(1-\beta_\gamma)^{-1}\big(\P-\pi_1(\eta_\gamma)\big)\otimes \big(\Q-\pi_2(\eta_\gamma)\big),
\end{equation}
which by construction belongs to $C(\P,\Q)$. Let us show that $\gamma$ can be chosen small enough, so that $\zeta=\zeta_\gamma$ possesses the required property.

Let $\alpha >0$. For $A \in \EE \otimes \EE$ satisfying $\xi(A)\ge \alpha$, we have
$$
\zeta_\gamma(A)\geq \eta_\gamma(A)\geq \gamma\Big(\alpha- \xi\big((E^\infty\times E^\infty)\setminus C_\gamma\big)\Big).
$$
Next,
$$
\xi\big((E^\infty\times E^\infty)\setminus C_\gamma\big)\leq  \xi\big((E^\infty\setminus B_\gamma^1)\times E^\infty\big)+
 \xi\big(E^\infty\times (E^\infty\setminus B_\gamma^2)\big)=\pi_1(\xi)\big(E^\infty\setminus B_\gamma^1\big)+\pi_2(\xi)\big(E^\infty\setminus B_\gamma^2\big)
$$
and by the definition of the sets $B_\gamma^i, i=1,2$
$$
\pi_1(\xi)\big(E^\infty\setminus B_\gamma^1\big)=\int_{E^\infty\setminus B_\gamma^1}  {\dd \pi_1(\xi)\over \dd\P}\,\dd\P\leq \gamma^{p-1}
\int_{E^\infty}\left({\dd \pi_1(\xi)\over \dd\P}\right)^p\,\dd\P\leq \gamma^{p-1}R^p,
$$
$$
\pi_2(\xi)\big(E^\infty\setminus B_\gamma^2\big)=\int_{E^\infty\setminus B_\gamma^2}  {\dd \pi_2(\xi)\over \dd\Q}\,\dd\Q\leq \gamma^{p-1}
\int_{E^\infty}\left({\dd \pi_2(\xi)\over \dd\Q}\right)^p\,\dd\Q\leq \gamma^{p-1}R^p.
$$
Hence, if $\gamma$ is taken small enough for  $4\gamma^{p-1}R\leq \alpha$, for every $A$ with $\xi(A)\geq \alpha$ we have for $\zeta=\zeta_\gamma$
$$
\zeta(A)\geq {\gamma\alpha\over 2}=:\alpha',
$$
which completes the proof of statement I.

\emph{II.} We fix $\xi\in \widehat  C(\P,\Q)$ and modify slightly the construction from the previous part of the proof. Let $B_\gamma^i, i=1,2, C_\gamma$ be as above,
then we define
$$
\widetilde\eta_\gamma(A)=\xi(A\cap C_\gamma).
$$
We fix  $\gamma\in (0,1)$ small enough, so that
$$
\xi\big((E^\infty\times E^\infty)\setminus C_\gamma\big)\leq \alpha/2,
$$
where $\alpha$ is as in the statement of the lemma.

We have $\widetilde\eta_\gamma=\gamma^{-1}\eta_\gamma$, and thus the total mass of the measure $\widetilde\eta_\gamma$ equals $\gamma^{-1}\beta_\gamma=\xi(C_\gamma)\leq 1$. In addition,
$$
\pi_1(\widetilde\eta_\gamma)\leq \gamma^{-1}\P, \quad \widetilde\pi_2(\eta_\gamma)\leq \gamma^{-1}\Q,
$$
and the total mass for each of the measures $\gamma^{-1}\P-\pi_1(\widetilde\eta_\gamma), \gamma^{-1}\Q-\pi_2(\tilde\eta_\gamma)$
equals $\gamma^{-1}(1-\beta_\gamma)\geq \gamma^{-1}-1$. Then
$$
\widetilde\zeta_\gamma:=\widetilde\eta_\gamma+(1-\gamma^{-1}\beta_\gamma)\Big(\gamma^{-1}(1-\beta_\gamma)\Big)^{-2}\Big(\gamma^{-1}\P-\pi_1(\widetilde \eta_\gamma)\Big)\otimes
\Big(\gamma^{-1}\Q-\pi_2(\widetilde\eta_\gamma)\Big)
$$
is a probability measure with
$$
\widetilde\zeta_\gamma(A)\geq \xi(A)-\xi\big((E^\infty\times E^\infty)\setminus C_\gamma\big)\geq \xi(A)-{\alpha\over 2};
$$
that is, $\widetilde\zeta_\gamma(A)\geq \alpha':=\alpha/2$ as soon as $\xi(A)\geq \alpha$. In addition,
the marginal distributions of  $\widetilde\zeta_\gamma$ equal
$$
(1-\beta_\gamma)^{-1}\Big((1-\gamma^{-1}\beta_\gamma)\P+(1-\gamma)\pi_1(\widetilde\eta_\gamma)\Big),\quad (1-\beta_\gamma)^{-1}\Big((1-\gamma^{-1}\beta_\gamma)\Q+
(1-\gamma)\pi_2(\widetilde\eta_\gamma)\Big),
$$
and their Radon-Nikodym densities w.r.t. $\P,\Q$ respectively  are bounded by
$$
R:=(1-\beta_\gamma)^{-1}\Big((1-\gamma^{-1}\beta_\gamma)+(1-\gamma)\gamma^{-1}\Big)=\gamma^{-1},
$$
hence $\widetilde \zeta_\gamma\in \widehat C_p^R(\P,\Q)$ for every $p\geq 1$.
\end{proof}

\begin{proof}[Proof of Proposition \ref{singular}]
 There exist two increasing sequences  $K_1^n,\, K_2^n, n\geq 1$ of  compact subsets of $E$ such that $K_1^n\cap K_2^n=\emptyset$,  $\nu_1(K_1^n)\ge 1-1/n$, and  $\nu_2(K_2^n)\ge 1-1/n$ for $n\geq 1$. Let
$$
\delta_n=d(K_1^n,K_2^n), \quad n\geq 1.
$$
Clearly $\delta_n, n\geq 1$ is non-increasing and $\delta_n>0$
for all $n\geq 1$ since $d:E \times E \to [0,\infty)$ is continuous with respect to $\rho \otimes \rho$. On the other hand,
for any $\xi \in C(\nu_1,\nu_2)$  we have
$$
\xi \big( d(X,Y)< \delta_n\big)\le {2\over n},
$$
proving the proposition.
\end{proof}
\section{Jankov's lemma and the proof of Proposition \ref{jan_cor}}

 Recall that a measurable space $(\mathbb{X}, \mathcal{X})$ is called \emph{(standard) Borel} if it is measurably isomorphic to a Polish space equipped with its
 Borel $\sigma$-algebra. For any
Borel space $(\mathbb{X}, \mathcal{X})$ and any set $A\in \mathcal{X}$,  this set endowed with its  {trace $\sigma$-algebra}
   $$
   \mathcal{X}_A:=\{A\cap B, B\in \mathcal{X}\}
   $$
is a Borel measurable space, see \cite[Corollary 13.4]{K95}.

Our proof of Proposition \ref{jan_cor} is based on the following lemma.

\begin{lemma} \emph{(Jankov's lemma,  \cite[Appendix 3 \S 1]{Dynkin_Yushk})}. Let $(\mathbb{X}, \mathcal{X})$, $(\mathbb{Y}, \mathcal{Y})$ be Borel measurable spaces and
let $f:\mathbb{Y}\to \mathbb{X}$ be a measurable mapping with $f(\mathbb{Y})=\mathbb{X}$.

Then for any probability measure $\nu$ on $(\mathbb{X}, \mathcal{X})$ there exists a measurable function $\phi: \mathbb{X}\to \mathbb{Y}$ such that
$f(\phi(x))=x$ for $\nu$-a.a. $x\in \mathbb{X}$.
\end{lemma}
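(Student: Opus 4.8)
The plan is to realise $\phi$ as a measurable uniformisation of the graph of $f$ and then to trade the universal measurability produced by the general uniformisation theorem for genuine $\mathcal{X}$-measurability, at the negligible cost of a $\nu$-null set. First I would reduce to the standard Borel setting: since $(\mathbb{X},\mathcal{X})$ and $(\mathbb{Y},\mathcal{Y})$ are Borel, fix Borel isomorphisms identifying them with Borel subsets of Polish spaces and transport $f$ and $\nu$ accordingly, so that without loss of generality both spaces are standard Borel. Consider the graph
\[
G=\{(x,y)\in\mathbb{X}\times\mathbb{Y}:f(y)=x\}.
\]
Since $f$ is $\mathcal{Y}/\mathcal{X}$-measurable and the diagonal of $\mathbb{X}\times\mathbb{X}$ is Borel, $G$ is the preimage of that diagonal under the Borel map $(x,y)\mapsto(x,f(y))$, so $G\in\mathcal{X}\otimes\mathcal{Y}$; and the surjectivity $f(\mathbb{Y})=\mathbb{X}$ says exactly that the projection of $G$ onto $\mathbb{X}$ is all of $\mathbb{X}$. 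The task is therefore to produce a measurable section $x\mapsto\phi(x)$ of $G$, that is, a map with $(x,\phi(x))\in G$ for ($\nu$-almost) every $x$.

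Second, I would invoke the Jankov–von Neumann uniformisation theorem: every analytic, hence every Borel, subset of a product of standard Borel spaces admits a uniformising map on its projection that is measurable with respect to the $\sigma$-algebra generated by the analytic sets, and is therefore universally measurable. Applied to $G$, this yields a universally measurable $\phi_0:\mathbb{X}\to\mathbb{Y}$ with $f(\phi_0(x))=x$ for every $x\in\mathbb{X}$. I expect this step to be the main obstacle. If one does not wish to cite the uniformisation theorem as a black box, one must build the section by hand: pass to a finer Polish topology on $\mathbb{Y}$ that still generates $\mathcal{Y}$ and makes $f$ continuous, so that each fibre $f^{-1}(\{x\})$ is closed and nonempty, fix a countable basis, and pin down a canonical fibre point by a Kuratowski–Ryll-Nardzewski-style successive refinement. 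The delicate point is to check, at each stage, that the sets governing the refinement depend measurably on $x$, and this is where the genuine descriptive-set-theoretic content lies.

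Finally, I would descend from universal to $\nu$-measurability. Universal measurability gives in particular that $\phi_0$ is measurable with respect to the $\nu$-completion $\mathcal{X}^\nu$; since the target $\mathbb{Y}$ is standard Borel, a routine argument (embed $\mathbb{Y}$ Borel-isomorphically into $[0,1]$, replace each of countably many real coordinate functions by a genuinely $\mathcal{X}$-measurable version agreeing with it $\nu$-almost everywhere, and intersect the corresponding conull sets) produces an $\mathcal{X}/\mathcal{Y}$-measurable $\phi$ with $\nu(\{\phi\neq\phi_0\})=0$. Redefining $\phi$ to equal a fixed point $y_0\in\mathbb{Y}$ on the remaining null set keeps it $\mathbb{Y}$-valued and $\mathcal{X}$-measurable, and then $f(\phi(x))=f(\phi_0(x))=x$ off a $\nu$-null set, which is exactly the asserted section.
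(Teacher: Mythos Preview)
The paper does not prove this lemma at all; it is stated with a citation to Dynkin--Yushkevich and then used as a black box in the proof of Proposition~\ref{jan_cor}. So there is no ``paper's own proof'' to compare against.

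Your sketch is a correct and standard route to Jankov's lemma: the graph $G$ is Borel, Jankov--von Neumann uniformisation furnishes a universally measurable section $\phi_0$, and then one trades universal measurability for genuine $\mathcal{X}$-measurability off a $\nu$-null set. The one place worth tightening is the last step. After embedding $\mathbb{Y}$ Borel-isomorphically into $[0,1]$ and replacing $\phi_0$ by an $\mathcal{X}$-measurable $[0,1]$-valued $\phi_1$ with $\phi_1=\phi_0$ $\nu$-a.e., you need $\phi_1$ to land in (the image of) $\mathbb{Y}$ in order to pull back. Since the Borel isomorphism sends $\mathbb{Y}$ onto a Borel subset $B\subset[0,1]$, the set $A=\{x:\phi_1(x)\in B\}$ lies in $\mathcal{X}$ and has full $\nu$-measure (because $\phi_0$ already lands in $B$); defining $\phi$ via the isomorphism on $A$ and as $y_0$ on $\mathbb{X}\setminus A$ then gives a bona fide $\mathcal{X}/\mathcal{Y}$-measurable map. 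With that small clarification the argument is complete.
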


In the framework of Proposition \ref{jan_cor}, we put $\mathbb{X}=M, \mathcal{X}=\EE_M$ (the trace $\sigma$-algebra), then $(\mathbb{X}, \mathcal{X})$ is a Borel space.
We define $\nu$ as the measure $\mu$ conditioned by $M$.

Before proceeding with the construction, we mention  several simple facts we will use.  First, let $\mathbb{S}$ be a Polish space and
 $\mathcal{P}(\mathbb{S})$ be endowed by the corresponding Kantorovich-Rubinshtein metric. Then the subset $\Delta\subset \mathcal{P}(\mathbb{S})$  consisting of all
 $\delta$-measures (that is, measures concentrated in one point) is closed, and  $\mathbb{S}$ and $\Delta$ are isomorphic.
 Second,
 the mapping $\theta$ from  $\mathcal{P}(E^\infty\times E^\infty)$ to $\mathcal{P}(E\times E)$ which maps the law of $\{(X_n, Y_n), n\geq 0\}$ to the law of $(X_0, Y_0)$ is
 (Lipschitz) continuous. Hence, the subset
 $$
 \Xi:=\{\xi\in \mathcal{P}(E^\infty\times E^\infty):\theta(\xi)\hbox{ is a $\delta$-measure}\}
 $$
 is closed. In addition, the mapping $\varrho:\Xi\to E\times E$ which transforms $\xi\in \Xi$ to the (unique) point $(x,y)\in E$ such that $\theta(\xi)=\delta_{(x,y)}$, is
 continuous. Then $\Xi$ endowed with the  trace $\sigma$-algebra is a Borel space and $\varrho$ is a measurable mapping on this space with $\varrho(\Xi)=E\times E$.
 Denote by $\varrho_{1,2}$ the (measurable) mappings  $\Xi\to E$ such that  $\varrho(\xi)=(\varrho_1(\xi), \varrho_2(\xi)), \xi\in \Xi.$

 Now we can proceed with the construction which deduces Proposition \ref{jan_cor} from Jankov's lemma. We fix $x\in E$, put
 $$
 \mathbb{Y}=\{\xi\in \Xi: \varrho_1(\xi)=x, \varrho_2(\xi)\in M, \pi_1(\xi)\sim \P_x, \pi_2(\xi)\ll \P_{\varrho_2(\xi)}\},
 $$
 and  $f(\xi)=\varrho_2(\xi), \xi\in \mathbb{Y}$.   Clearly, $f(\mathbb{Y})=M$ and $f$ is a restriction on $\mathbb{Y}$ of a measurable mapping $\Xi\to E$ (the projection
 of $\varrho$ on the second coordinate). Hence in order to be able to apply Jankov's lemma we need only to show that $\mathbb{Y}$ is a measurable subset of $\Xi$.
 Because $\varrho_{1,2}$ are measurable and $\{x\}, M\in \EE$, the sets
 $$
 \{\xi\in \Xi: \varrho_1(\xi)=x\}, \quad \{\xi\in \Xi: \varrho_2(\xi)\in M\}
$$
are measurable.

Next, recall that for any two probability measures $\P, \Q$ on $(E^\infty, \mathcal{E}^{\otimes \infty})$ one has $\P\ll \Q$ if, an only if, for every $\eps>0$ there
exists $\delta>0$ such that
 $$
 \P(A)\leq \eps\quad \hbox{for any $A\in \mathcal{E}^{\otimes \infty}$ such that}\quad \Q(A)\leq \delta.
 $$
 Because $E^\infty$ is a Polish space, there exists a
 countable algebra
 $\mathcal{A}$ which generates $\mathcal{E}^{\otimes \infty}$, and then for any $\gamma>0, A\in \mathcal{E}^{\otimes \infty}$ there exists $A_\gamma\in \mathcal{A}$
 such that
 $$
 \P(A\triangle A_\gamma)<\gamma, \quad \Q(A\triangle A_\gamma)<\gamma.
 $$
 Then in the above characterization of the absolute continuity the class $\mathcal{E}^{\otimes \infty}$ can be replaced by $\mathcal{A}$. Hence
 $$
\{\xi\in \Xi: \pi_2(\xi)\ll \P_{\varrho_2(\xi)}\}=\bigcap_{m=1}^\infty\bigcup_{k=1}^\infty\bigcap_{A\in \mathcal{A}}B_{m,k}(A),
$$
where
$$
B_{m,k}(A)=\left\{\xi:  \pi_2(\xi)(A)\leq m^{-1}, \P_{\varrho_2(\xi)}(A)\leq k^{-1}\right\}\bigcup \left\{\xi:  \P_{\varrho_2(\xi)}(A)> k^{-1}\right\}.
$$
Since the mappings $\varrho_2:\Xi\to E, \pi_2:\Xi\to \mathcal{P}(E^\infty)$, $E\ni v\mapsto \P_v\in  \mathcal{P}(E^\infty)$, and
$$
\mathcal{P}(E^\infty)\ni \P\mapsto \P(A)\in \mathbb{R}, \quad A\in \mathcal{A}
$$
are measurable, each of the sets $B_{m,k}(A)$ is measurable. Therefore the set
$$
\{\xi\in \Xi: \pi_2(\xi)\ll \P_{\varrho_2(\xi)}\}
$$
is measurable, as well. Finally, a similar and simpler argument shows that the set
$$
\{\xi\in \Xi: \pi_1(\xi)\sim \P_{x}\}
$$
is measurable (we omit the explicit expression for this set here).

Summarizing, we have that $\mathbb{Y}$ is a measurable subset of $\Xi$ and therefore, being endowed with the trace $\sigma$-algebra, is a Borel space.
We finish the proof of Proposition \ref{jan_cor} by applying
Jankov's lemma to the Borel spaces $\mathbb{X}$, $\mathbb{Y}$, the mapping $f$, and  the measure $\nu$ specified above.

\section{Kuratovskii and Ryll-Nardzevski's theorem and the proof of Proposition \ref{prop}} Our proof of Proposition \ref{prop} is based on measurability and
measurable selection results discussed in \cite{Stroock_Varad}, Chapter 12.1. Let us survey the required results briefly.

Let $\mathbb{X}$ be a Polish space with complete metric $\rho$. Denote by $\comp(\mathbb{X})$ the space of all non-empty compact subsets of $\mathbb{X}$, endowed with the
Hausdorff metric.

\begin{theorem}\label{tKR} (\cite[Theorem 12.1.10]{Stroock_Varad}  Let $(E, \EE)$ be a measurable space and $\Phi:E \to \comp(\mathbb{X})$ be a measurable map.
Then there exists a measurable map $\phi: E\to \mathbb{X}$ such that $\phi(q)\in \Phi(q), q\in E$.
\end{theorem}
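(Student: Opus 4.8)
The plan is to establish this by the classical successive-approximation proof of the Kuratowski--Ryll-Nardzewski theorem: I would build a uniformly Cauchy sequence of measurable ``approximate selections'' taking values in a countable dense set, and then show that their pointwise limit lands inside $\Phi(q)$.

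First I would isolate the single measurability fact that drives everything. For each fixed $x\in\mathbb{X}$ the map $A\mapsto \rho(x,A):=\inf_{a\in A}\rho(x,a)$ is $1$-Lipschitz on $\comp(\mathbb{X})$ equipped with the Hausdorff metric $d_H$, since $|\rho(x,A)-\rho(x,B)|\le d_H(A,B)$; in particular it is continuous, hence Borel measurable. Composing with the measurable map $\Phi$, the function $q\mapsto\rho(x,\Phi(q))$ is therefore $\EE$-measurable for every fixed $x$, so that $\{q:\Phi(q)\cap B(x,r)\neq\emptyset\}=\{q:\rho(x,\Phi(q))<r\}\in\EE$ for all $x,r$. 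This is precisely the bridge from ``$\Phi$ is measurable in the Hausdorff metric'' to the pointwise selections below.

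Next comes the construction. Fix a countable dense set $\{x_k\}_{k\ge 1}\subset\mathbb{X}$, and build measurable maps $f_n:E\to\{x_k\}$ by induction so that $\rho(f_n(q),\Phi(q))<2^{-n}$ and $\rho(f_n(q),f_{n-1}(q))<2^{-n}+2^{-(n-1)}$ for all $q$. For the base step, since each $\Phi(q)$ is nonempty one sets $f_1(q)=x_k$ for the \emph{smallest} index $k$ with $\rho(x_k,\Phi(q))<1/2$; measurability holds because $\{q:f_1(q)=x_k\}$ is a Boolean combination of the measurable sets $\{q:\rho(x_j,\Phi(q))<1/2\}$. For the inductive step, given $f_{n-1}$, choose $p\in\Phi(q)$ with $\rho(f_{n-1}(q),p)<2^{-(n-1)}$ and then a dense point $x_k$ with $\rho(x_k,p)<2^{-n}$; the triangle inequality yields both $\rho(x_k,\Phi(q))<2^{-n}$ and $\rho(x_k,f_{n-1}(q))<2^{-n}+2^{-(n-1)}$. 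Defining $f_n(q)=x_k$ for the smallest admissible $k$, the level set $\{q:f_n(q)=x_k\}$ is again measurable, now also using measurability of $q\mapsto f_{n-1}(q)$ and of $q\mapsto\rho(x_k,f_{n-1}(q))$.

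Finally, the estimate $\rho(f_n(q),f_{n-1}(q))<2^{-(n-2)}$ makes $(f_n(q))_n$ uniformly Cauchy, so $\phi(q):=\lim_n f_n(q)$ exists and is measurable as a pointwise limit of measurable $\mathbb{X}$-valued maps. Since $\rho(f_n(q),\Phi(q))<2^{-n}\to 0$ and $\Phi(q)$ is compact, hence closed, the limit satisfies $\phi(q)\in\Phi(q)$, as required. The main obstacle is not conceptual but one of careful bookkeeping: arranging the two inductive estimates so that a single dense point achieves them simultaneously, verifying they combine into a geometric Cauchy bound, and keeping every choice measurable via the ``smallest index'' device. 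Compactness of the values enters only at the last step (to upgrade approximation to membership); the substantive input is the Lipschitz/measurability property of the distance functions $q\mapsto\rho(x,\Phi(q))$.
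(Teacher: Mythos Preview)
Your argument is correct and is precisely the classical successive-approximation proof of the Kuratowski--Ryll-Nardzewski selection theorem: the Lipschitz dependence of $A\mapsto\rho(x,A)$ on the Hausdorff distance yields measurability of $q\mapsto\rho(x,\Phi(q))$, the ``smallest admissible index'' device produces measurable approximate selections with geometric Cauchy increments, and completeness of $\rho$ together with closedness of $\Phi(q)$ closes the loop. One small remark: your claim that ``compactness of the values enters only at the last step'' slightly understates its role, since it is the compact-valued hypothesis that makes $\Phi$ land in $\comp(\mathbb{X})$ and hence makes the Hausdorff-metric Lipschitz bound available at the very first step; closedness alone would suffice for the final inclusion, but you used compactness earlier to set up the measurability bridge.

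There is, however, nothing to compare: the paper does not supply its own proof of this statement. Theorem~\ref{tKR} is quoted verbatim from \cite[Theorem 12.1.10]{Stroock_Varad} and used as a black box in the proof of Proposition~\ref{prop}. Your write-up therefore goes beyond what the paper provides, filling in the standard proof that the paper simply cites.
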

The above theorem is a weaker version of the Kuratovskii and Ryll-Nardzevski's theorem on measurable selection for a set-valued mapping which takes values in the
space of closed subsets of $\mathbb{X}$; e.g. \cite{W80}.

In the set-up of Proposition \ref{prop}, for $\mu,\nu\in \mathcal{P}(S_2)$,
we denote  by $C_{\mathrm{opt}}(\mu, \nu)$ the subset of $C(\mu, \nu)$ consisting of all  couplings which minimize the distance-like  function $h$; that is,
$$
\eta\in C_{\mathrm{opt}}(\mu, \nu)\quad \Leftrightarrow\quad \eta\in C(\mu, \nu), \quad \int_{S_2\times S_2}h(u,v)\eta(\dd u, \dd v)=h(\mu, \nu).
$$
We prove the following simple facts.
\begin{lemma}
\begin{enumerate} For any $\mu,\nu\in \mathcal{P}(S_2)$:
  \item the set $C_{\mathrm{opt}}(\mu, \nu)$ is non-empty;
                             \item the sets $C(\mu, \nu)$, $C_{\mathrm{opt}}(\mu, \nu)$ are compact.
    \end{enumerate}
    \end{lemma}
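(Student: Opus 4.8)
The plan is to obtain compactness of $C(\mu,\nu)$ via Prohorov's theorem and then deduce the statements about $C_{\mathrm{opt}}(\mu,\nu)$ from the lower semicontinuity of the cost functional $\eta\mapsto\int_{S_2\times S_2}h\,\dd\eta$, where $\mathcal{P}(S_2\times S_2)$ carries the topology of weak convergence.

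First I would show that $C(\mu,\nu)$ is compact. It is nonempty, as $\mu\otimes\nu\in C(\mu,\nu)$. For tightness, fix $\eps>0$ and pick compact $K_1,K_2\subset S_2$ with $\mu(S_2\setminus K_1)<\eps/2$ and $\nu(S_2\setminus K_2)<\eps/2$; then every $\eta\in C(\mu,\nu)$ satisfies $\eta\big((S_2\times S_2)\setminus(K_1\times K_2)\big)\le\mu(S_2\setminus K_1)+\nu(S_2\setminus K_2)<\eps$, and $K_1\times K_2$ is compact, so $C(\mu,\nu)$ is tight and hence relatively compact by Prohorov's theorem. The marginal maps $\pi_i:\mathcal{P}(S_2\times S_2)\to\mathcal{P}(S_2)$ are continuous for weak convergence, so $C(\mu,\nu)=\pi_1^{-1}(\{\mu\})\cap\pi_2^{-1}(\{\nu\})$ is closed; being closed and relatively compact, it is compact.

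The central point is the lower semicontinuity of $\eta\mapsto\int h\,\dd\eta$. Since $h$ is lower semicontinuous and bounded (valued in $[0,1]$), on the metric space $S_2\times S_2$ it is the increasing pointwise limit of the bounded Lipschitz functions $h_k(u,v)=\inf_{(u',v')}\big\{h(u',v')+k\,\varrho\big((u,v),(u',v')\big)\big\}$, where $\varrho$ is a bounded metric metrizing $S_2\times S_2$. For each fixed $k$ the map $\eta\mapsto\int h_k\,\dd\eta$ is continuous, and by monotone convergence $\int h\,\dd\eta=\sup_k\int h_k\,\dd\eta$; a supremum of continuous functions is lower semicontinuous, so $\eta\mapsto\int h\,\dd\eta$ is lower semicontinuous.

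With these ingredients the remaining claims are soft. The lower semicontinuous functional attains its infimum $h(\mu,\nu)$ on the nonempty compact set $C(\mu,\nu)$ by the extreme value principle, so $C_{\mathrm{opt}}(\mu,\nu)\neq\emptyset$. Moreover $C_{\mathrm{opt}}(\mu,\nu)$ is closed in $C(\mu,\nu)$: if $\eta_n\in C_{\mathrm{opt}}(\mu,\nu)$ and $\eta_n\to\eta$, then $\eta\in C(\mu,\nu)$ by closedness of the latter and lower semicontinuity gives $\int h\,\dd\eta\le\liminf_n\int h\,\dd\eta_n=h(\mu,\nu)$, while $\int h\,\dd\eta\ge h(\mu,\nu)$ by definition of the infimum; hence $\eta\in C_{\mathrm{opt}}(\mu,\nu)$. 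A closed subset of a compact set is compact. The main obstacle is the lower semicontinuity step, since this is where lower semicontinuity (rather than continuity) of $h$ is genuinely needed; the rest is a standard application of Prohorov's theorem and the extreme value principle.
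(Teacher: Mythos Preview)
Your proof is correct and follows essentially the same route as the paper: Prokhorov for compactness of $C(\mu,\nu)$, lower semicontinuity of $I_h(\eta)=\int h\,\dd\eta$, and then the extreme value principle to get nonemptiness and closedness of $C_{\mathrm{opt}}(\mu,\nu)$. The only (inessential) difference is that you obtain lower semicontinuity of $I_h$ via a Moreau--Yosida approximation of $h$ by bounded Lipschitz functions, whereas the paper uses Skorokhod's representation theorem together with Fatou's lemma.
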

\begin{proof}  Since $\pi_1, \pi_2:
\mathcal{P}(S_2\times S_2)\to \mathcal{P}(S_2)$ are continuous, any weak limit point of a sequence from $C(\mu, \nu)$ belongs to $C(\mu, \nu)$. Because the marginal
distributions of all $\eta\in C(\mu, \nu)$ are the same, the set $C(\mu, \nu)$ is tight, which by the Prokhorov theorem completes the proof of compactness of  $C(\mu, \nu)$.

Next, the mapping
$$
\mathcal{P}(S_2\times S_2)\ni \eta\mapsto I_h(\eta):=\int_{S_2\times S_2}h(u,v)\eta(\dd u, \dd v) \in [0,1]
$$
is lower semicontinuous. To see that, consider a sequence $\eta_n\Rightarrow \eta$ and use the Skorokhod ``common probability space principle'': there exist random
elements $X_n, n\geq 1, X$  with $\mathrm{Law}\,(X_n)=\eta_n, \mathrm{Law}\,(X)=\eta$ such that
$X_n\to X$ a.s. (see \cite[Theorem 11.7.2]{Dudley}. Since $h$ is bounded and lower semicontinuous, we  have
$$
\E h(X)\leq \E\liminf_nh(X_n)\leq \liminf_n \E h(X_n),
$$
which proves the required semicontinuity of $I_h$. By this semicontinuity (a) the function $I_h$ attains its minimum on the compact set $C(\mu, \nu)$, i.e.
$C_{\mathrm{opt}}(\mu, \nu)$ is non-empty; (b) the set $C_{\mathrm{opt}}(\mu, \nu)$ is closed, and since it is a subset of the compact set $C(\mu, \nu)$, it is compact.
    \end{proof}

To prove Proposition \ref{prop}, we apply Theorem \ref{tKR} in the following setting:  $E=S_1\times S_1$, $\mathbb{X}=\mathcal{P}(S_2\times S_2)$, and
$$
\Phi\big((x,y)\big)=C_{\mathrm{opt}}(Q(x), Q(y)), \quad (x,y)\in E.
$$
We represent $\Phi$ as a composition of $\Psi$ and $\Upsilon$, where
$$
\Psi\big((x,y)\big)=C(Q(x), Q(y)), \quad (x,y)\in E
$$
and
$$
\Upsilon(K)=\left\{\eta\in \mathbb{X}: I_h(\eta)=\min_{\zeta\in K}I_h(\zeta)\right\}\in \comp(\mathbb{X}), \quad K\in \comp(\mathbb{X}).
$$
Clearly, the minimization of $I_h$ is equivalent to maximization of $1-I_h$, and $1-I_h$ is upper semicontinuouus. Hence the mapping
$\Upsilon:\comp(\mathbb{X})\to \comp(\mathbb{X})$ is measurable by \cite{Stroock_Varad}, Lemma 12.1.7. On the other hand, for any sequence
$(x_n,y_n)\to (x,y)$ and $\eta_n\in \Psi((x_n, y_n))$ we have that the marginal distributions of $\eta_n$ weakly converge to
$Q(x), Q(y)$ respectively. Then by the Prokhorov theorem there exist a weakly convergent subsequence $\eta_{n_k}$, and in addition the weak
limit has the marginal distributions $Q(x), Q(y)$, that is, belongs to $\Psi((x,y))$. Then  the mapping $\Psi:E\to \comp(\mathbb{X})$ is
measurable by \cite[Lemma 12.1.8]{Stroock_Varad}. Hence $\Phi$ is measurable, as well, and we obtain the statement of Proposition \ref{prop} as a straightforward
corollary of Theorem \ref{tKR}. \qed

\end{document}